\newcommand{\rr}{\mathbf{r}}
\renewcommand{\ss}{\mathbf{s}}
\renewcommand{\ll}{\mathbf{l}}
\newcommand{\vv}{\mathbf{v}}
\newcommand{\ww}{\mathbf{w}}
\newcommand{\zz}{\mathbf{z}}
\DeclareMathOperator{\Id}{Id}
\DeclareMathOperator{\Aut}{Aut}
\DeclareMathOperator{\dom}{dom}
\DeclareMathOperator{\ess}{ess}
\DeclareMathOperator{\Diam}{Diam}
\DeclareMathOperator{\Diag}{Diag}
\DeclareMathOperator{\dist}{dist}
\DeclareMathOperator{\Conv}{Conv}
\DeclareMathOperator{\Mat}{Mat}
\DeclareMathOperator{\Cstar}{C^*}
\DeclareMathOperator{\Image}{Image}
\newcommand{\CC}{\mathbb{C}}
\newcommand{\NN}{\mathbb{N}}
\newcommand{\RR}{\mathbb{R}}
\newcommand{\TT}{\mathbb{T}}
\newcommand{\ZZ}{\mathbb{Z}}
\newcommand{\calA}{\mathcal{A}}
\newcommand{\calB}{\mathcal{B}}
\newcommand{\calH}{\mathcal{H}}
\newcommand{\calM}{\mathcal{M}}
\newcommand{\calN}{\mathcal{N}}
\newcommand{\calO}{\mathcal{O}}
\newcommand{\calP}{\mathcal{P}}
\newcommand{\calS}{\mathcal{S}}
\newcommand{\calU}{\mathcal{U}}
\newcommand{\calW}{\mathcal{W}}
\newcommand{\calX}{\mathcal{X}}
\newtheorem{theorem}{Theorem}[section]
\newtheorem{lemma}[theorem]{Lemma}
\newtheorem{definition}[theorem]{Definition}
\newtheorem{proposition}[theorem]{Proposition}
\theoremstyle{remark}
\newtheorem{remark}[theorem]{Remark}
\title[Commutator estimates for normal operators in factors and applications to derivations]{Commutator estimates for normal operators in factors with applications to derivations}
\begin{document}
\author[A. Ber]{Aleksei F. Ber}
\address{
	Department of Mathematics \\National University of Uzbekistan\\
	Vuzgorodok, 100174\\ Tashkent, Uzbekistan\\
	\emph{E-mail~:} {\tt ber@ucd.uz}
}
\author[M.J. Borst]{Matthijs J. Borst}
\address{Delft Institute of Applied Mathematics \\Delft University of Technology\\
	P.O. Box 5031,
	2600 GA\\ Delft, The Netherlands\\
	\emph{E-mail~:} {\tt  m.j.borst@tudelft.nl }
}
\author[F. Sukochev]{Fedor A. Sukochev}
\address{School of Mathematics and Statistics, University of New South Wales, Kensington, 2052, NSW, Australia
	\emph{E-mail~:} {\tt f.sukochev@unsw.edu.au}
}
\date{\today}
\subjclass[2010]{47B47, 46L10}
\keywords{von Neumann factors, operator inequalities, derivations, commutators}

	\begin{abstract}
	For a normal measurable operator $a$ affiliated with a von Neumann factor $\calM$ we show:
			
				If $\calM$ is infinite, then there is $\lambda_0\in \CC$ so that for $\varepsilon>0$ there are $u_{\varepsilon}=u_{\varepsilon}^*$, $v_{\varepsilon}\in \calU(\calM)$ with
				$$v_\varepsilon|[a,u_\varepsilon]|v_\varepsilon^*\geq(1-\varepsilon)(|a-\lambda_0\textbf{1}|+u_\varepsilon|a-\lambda_0\textbf{1}|u_\varepsilon).$$
				
				If $\calM$ is finite, then there is $\lambda_0\in\CC$ and $u,v\in\calU(\calM)$ so that
				$$v|[a,u]|v^*\geq \frac{\sqrt{3}}{2}(|a-\lambda_0\textbf{1}|+u|a-\lambda_0\textbf{1}|u^*).$$
		
			These bounds are optimal for infinite factors, II$_1$-factors and some I$_n$-factors. Furthermore, for finite factors applying $\|\cdot\|_{1}$-norms to the inequality provides estimates on the norm of the inner derivation $\delta_{a}:\calM\to L_1(\calM,\tau)$ associated to $a$. While  by \cite[Theorem 1.1]{BBS} it is known for finite factors and self-adjoint $a\in L_1(\calM,\tau)$ that $\|\delta_{a}\|_{\calM\to L_1(\calM,\tau)} = 2\min_{z\in \CC}\|a-z\|_{1}$, we present concrete examples of finite factors $\calM$ and normal operators $a\in \calM$ for which  this fails.
	\end{abstract}
\maketitle

	\section{Introduction}
	Derivations are linear maps $\delta$ that satisfy the Leibniz rule $\delta(xy) = \delta(x)y+x\delta(y)$. They play an essential role in the theory of Lie algebras, Cohomology, the study of Semi-groups and in Quantum Physics, see \cite{HSsurvey,KL,SS}.
	A classical result on derivations is due to Stampfli \cite{Stampfli} which asserts that for $a\in B(H)$, a bounded operator on a Hilbert space $H$, the derivation $\delta_{a}:B(H)\to B(H)$ defined by the commutator $\delta_a(x) = [a,x]=ax-xa$ has operator norm $\|\delta_{a}\| = 2\inf_{z\in \CC}\|a-z\mathbf{1}\|$. Through the work of \cite{KLR,Gaj,Zsido}, the result of Stampfli has been extended to derivations on arbitrary von Neumann algebras $\calM$ (see also \cite{Magajna} for more in this direction). More precisely, the result of Zsid\'{o} \cite[Corollary]{Zsido} asserts that for $\calM$ a von Neumann algebra and $a\in \calM$, the derivation $\delta_a:\calM\to \calM$  associated to $a$ satisfies the distance formula:
	\begin{align}\label{eq:derivation-distance-formula}
		\|\delta_{a}\|_{\calM\to\calM} = 2\min_{z\in Z(\calM)}\|a-z\|,
	\end{align}
	where $Z(\calM)$ denotes the center of $\calM$. 
	
	Our research aims to obtain results similar to \eqref{eq:derivation-distance-formula} for derivations that map $\calM$ into the predual $\calM_*$.
	Indeed, the predual $\calM_*$ is a $\calM$-bimodule (see \cref{section:estimates-for-derivations}) and therefore it is possible to consider derivations $\delta:\calM\to \calM_*$. 
	Important work on such derivations was done in \cite{BP,Haagerup,BGM} and particularly the result of \cite[Theorem 4.1]{Haagerup} showed that all these derivations are inner (i.e. of the form $\delta=\delta_{a}$ for some $a\in \calM_*$, defined by $\delta_a(x) = ax-xa$). These studies arose after Connes proved in \cite{Connes} that  all amenable $\Cstar$-algebras are necessarily nuclear. Haagerup proved in  \cite{Haagerup} that the reverse implication is also true. 
	
	In \cite{BBS} the norms of these derivations were studied and  results analogouos to \eqref{eq:derivation-distance-formula} were found in certain cases: for $\calM$ properly infinite it was shown that some form of formula \eqref{eq:derivation-distance-formula} holds true and for $\calM$ finite the same was proved under the condition that $a$ is self-adjoint. The proofs of these results were based on improvements of the operator estimates obtained in \cite{BS2012, BSWA}, see below:
	\begin{theorem}\label{bs_t1}\cite[Theorem 1]{BS2012}
		Let $\calM$ be a factor and let $a=a^*\in S(\calM)$ (here $S(\calM)$ is the algebra of measurable operators attached to $\calM$).
		\begin{enumerate}
			\item If $\calM$ is a finite factor or else a purely infinite $\sigma$-finite factor, then there
			exists $\lambda_0\in\RR$ and $u_0=u_0^*\in\calU(\calM)$, such that
			$$|[a,u_0]|=u_0|a-\lambda_0\textbf{1}|u_0+|a-\lambda_0\textbf{1}|$$
			where $\calU(\calM)$ is the group of all unitary operators in $\calM$;
			\item there exists $\lambda_0\in\RR$ so that for any $\varepsilon>0$ there exists $u_\varepsilon=u_\varepsilon^*\in\calU(\calM)$
			such that
			\begin{align}\label{bs_t1_2}
				|[a,u_\varepsilon]|\geq (1-\varepsilon)|a-\lambda_{0}\textbf{1}|.
			\end{align}
		\end{enumerate}
	\end{theorem}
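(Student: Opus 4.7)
The plan is to derive a single algebraic identity that reduces the theorem to an existence problem for certain projections in $\calM$, and then to supply these projections in each case.

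Set $b = a - \lambda_0\mathbf{1}$ for a $\lambda_0 \in \RR$ to be chosen, and let $e_+^0 = \chi_{(\lambda_0,\infty)}(a)$, $e_-^0 = \chi_{(-\infty,\lambda_0)}(a)$, $e_0 = \chi_{\{\lambda_0\}}(a)$. The key identity is the following: whenever $e_\pm \in \calM$ are mutually orthogonal projections with $e_+ \geq e_+^0$, $e_- \geq e_-^0$, $(e_+ - e_+^0) + (e_- - e_-^0) \leq e_0$, and $w \in \calM$ is a partial isometry with $w^*w = e_+$ and $ww^* = e_-$, then $u := w + w^* + (1 - e_+ - e_-)$ is a self-adjoint unitary satisfying
\[
|[b, u]| \;=\; u|b|u + |b| \;\geq\; |b|.
\]
I would prove this by a direct block-matrix computation in the decomposition $\calH = e_+\calH \oplus e_-\calH \oplus (1 - e_+ - e_-)\calH$, in which $b$ takes the diagonal form $B_+ \oplus (-B_-) \oplus 0$ with $B_\pm \geq 0$, and $u$ becomes $\begin{pmatrix}0 & w^*\\ w & 0\end{pmatrix} \oplus 1$. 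Expanding $[b,u]^*[b,u]$, its $(1,1)$- and $(2,2)$-blocks simplify to $(B_+ + w^*B_-w)^2$ and $(B_- + wB_+w^*)^2$ respectively, exactly matching the corresponding blocks of $(u|b|u + |b|)^2$; positivity of both sides then yields the identity after taking square roots.

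For part (1), the remaining work is to construct $e_\pm$ in each factor type. In a finite factor I choose $\lambda_0$ to be a $\tau$-median of $a$, i.e.\ with $\tau(e_+^0), \tau(e_-^0) \leq 1/2$; the existence of such a median follows from monotonicity of the left- and right-tail functions. Splitting $e_0$ into $p_+ + p_- + (\mathrm{rest})$ so that $\tau(e_+^0 + p_+) = \tau(e_-^0 + p_-) = t$ for some $t \in [\max\{\tau(e_+^0), \tau(e_-^0)\}, 1/2]$ (choosing $t$ as a multiple of $1/n$ in the type $I_n$ case) gives $e_\pm := e_\pm^0 + p_\pm$ of equal trace, hence Murray--von~Neumann equivalent in the finite factor, supplying $w$ and $u_0$. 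In a purely infinite $\sigma$-finite factor every two non-zero projections are equivalent; choosing $\lambda_0$ so that $e_\pm^0 \neq 0$ (transferring mass from $e_0$ in boundary cases) allows $e_\pm := e_\pm^0$, and the identity yields $u_0$.

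Part (2) for factors like $I_\infty$ and II$_\infty$ is the main obstacle: $e_+^0$ and $e_-^0$ may have drastically unequal sizes (for example one of finite trace, one infinite) and fail to be equivalent, so the exact identity is unavailable. My plan here is a multiplicative "dilation" construction. Fix $M := \lceil 1/\varepsilon \rceil$, partition the spectrum of $|b|$ into geometric shells with ratio $M$, and pair spectral projections of $b$ on consecutive shells by partial isometries between Murray--von~Neumann equivalent subprojections, which are available by comparability of projections in the infinite factor $\calM$. Defining $u_\varepsilon$ to be the direct sum of the resulting block swaps (identity on any unpaired remainder), a direct computation shows that on a pair of shells with spectral values approximately $\mu$ and $M\mu$ the commutator satisfies $|[b, u_\varepsilon]| \geq (M-1)\mu = (1-1/M)\cdot M\mu$, yielding the bound $|[b, u_\varepsilon]| \geq (1-\varepsilon)|b|$. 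The delicate technical step, where I expect most of the work to lie, is to refine the shells and absorb the "leftover" subprojections—those with no size-matched partner in the adjacent shell—via iteration, so that the operator inequality holds on the entire Hilbert space rather than only on the paired parts.
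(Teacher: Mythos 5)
First, note that the paper does not prove this statement at all: \cref{bs_t1} is quoted verbatim from \cite[Theorem 1]{BS2012}, so your proposal can only be measured against the known argument there (and against its echo in the proof of \cref{t_inf} in this paper). Your treatment of part (1) is sound and is essentially the standard route: the block identity $|[b,u]|=u|b|u+|b|$ for the swap unitary $u=w+w^*+(\mathbf{1}-e_+-e_-)$ checks out (both sides are positive with equal squares, since $b$ is $B_+\oplus(-B_-)\oplus 0$ in your decomposition), and the median/trace-splitting argument in finite factors, respectively the equivalence of all nonzero projections in a $\sigma$-finite type III factor, does produce the required $e_+\sim e_-$; this is the same mechanism that appears in Case 1 of the proof of \cref{t_inf}, where $e(\{\lambda_0\})\sim\mathbf{1}$.

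Part (2), however, has a genuine gap in exactly the cases where (1) does not already imply it (I$_\infty$, II$_\infty$, non-$\sigma$-finite type III). First, you never say how $\lambda_0$ is chosen there, and this is the heart of the matter: the inequality fails for generic $\lambda_0$. For example, in $B(H)$ take $a$ with eigenvalue $1$ of infinite multiplicity and eigenvalues $0,2$ of multiplicity one; no unitary gives $|[a,u]|\geq(1-\varepsilon)|a|$, and the only workable choice is $\lambda_0=1$. In general $\lambda_0$ must be a point whose neighbourhood spectral projections dominate (are at least equivalent to) the spectral projections away from it --- the ``point of densification'' device used in \cref{t_inf} and in \cite{BS2012} --- and it must be fixed independently of $\varepsilon$. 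Second, the quantitative core of your dilation scheme is wrong as stated: with consecutive geometric shells of ratio $M$, the paired spectral values need not be $\mu$ and $M\mu$; near a common shell boundary both values are $\approx M^{k+1}$, so on that part $|[b,u]|$ can be arbitrarily small compared with $|b|$, and no choice of $M$ rescues the bound. What is actually needed is to pair the piece where $|a-\lambda_0|>\gamma$ with a piece where $|a-\lambda_0|<\tfrac{\varepsilon}{2}\gamma$ (a ratio tending to $0$, not a fixed ratio $1/M$), over a nested sequence of scales $\gamma_n\downarrow 0$, and this is only possible because of the correct choice of $\lambda_0$. Third, the subprojections used for the pairing must commute with $a$ for your block estimate to apply; producing equivalent commuting subprojections is itself a nontrivial step (cf.\ \cref{l2_inf}, i.e.\ \cite[Lemma 3]{BS2012}). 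The ``leftover absorption'' you defer is therefore not a routine refinement but coincides with the actual difficulty of the theorem; as it stands, part (2) of your proposal does not go through.
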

    This theorem was extended to arbitrary von Neumann algebras in \cite{BSWA} with the replacement of $\lambda_{0}\textbf{1}$ by an element from the center. In \cite[Theorem B.1]{BBS} inequality \eqref{bs_t1_2} was extended to:
    \begin{align}\label{bs_t1_2_1}
    |[a,u_\varepsilon]|\geq (1-\varepsilon)(|a-\lambda_{0}\textbf{1}|+u_\varepsilon|a-\lambda_{0}\textbf{1}|u_\varepsilon).
    \end{align}

    The question arises: is such an inequality as \eqref{bs_t1_2_1} true for arbitrary $a\in S(\calM)$? More precisely, are there such $\lambda_0\in\CC,\ u,v,w\in \calU(\calM)$ and a constant $C>0$ such that
    \begin{align}\label{bs_t1_2_2}
    |[a,u]|\geq C(v|a-\lambda_{0}\textbf{1}|v^*+w|a-\lambda_{0}\textbf{1}|w^*)
    \end{align}
	holds true?
    In this paper, we give an answer to this question in the case when $a$ is a normal operator (see Theorems \ref{t3}, \ref{t_inf}). It turns out that if $\calM$ is an infinite factor, then the constant $C$ can be chosen arbitrarily close to $1$, just as in the case of self-adjoint $a$. However,  in the case when $\calM$ is a finite factor, the situation changes. For II$_1$-factors the optimal constant $C$ turns out to be equal to $\frac{\sqrt{3}}{2}$ and for I$_n$-factors appropriate upper and lower bounds on the optimal constant are given by $\Lambda_n \leq C\leq \frac{1}{2}\widetilde{\Lambda}_n$ (see \eqref{eq:definition_lambda_n} and \eqref{eq:definition-tilde-lambda_n} for definitions of these constants and \eqref{eq:estimates-lambda} for estimates).
    We summarize above results in the following theorem.
    \begin{theorem}[see Theorems \ref{t3}, \ref{t_inf}]\label{theorem_main}
    	Let $\calM$ be a factor and let $a\in S(\calM)$ be normal. Then there is a $\lambda_0\in \CC$ and unitaries $u,v,w\in \calU(\calM)$ such that
    	\begin{align}
    		|[a,u]| \geq C\left(v|a-\lambda_0\mathbf{1}|v^* + w|a-\lambda_0\mathbf{1}|w^*\right)
    	\end{align}
    	for some constant $C>0$ independent of $a$. Moreover
    	\begin{enumerate}
    		\item when $\calM$ is a I$_n$-factor, $n<\infty$, the optimal constant satisfies $\Lambda_n \leq C\leq \frac{1}{2}\widetilde{\Lambda}_n$.
    		\item when $\calM$ is a II$_1$-factor, the optimal constant is $C=\frac{\sqrt{3}}{2}$.
    		\item when $\calM$ is an infinite factor, we can choose $C$ arbitrarily close to $1$.
    	\end{enumerate}
    \end{theorem}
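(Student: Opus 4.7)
The statement consolidates Theorems~\ref{t3} and \ref{t_inf}, so the ``proof'' is really an assembly of those two case-by-case results; in each case the unified form in the statement is recovered by conjugating the displayed inequalities from the abstract by a single unitary. In both regimes, the overall strategy is the same: first pick the centering $\lambda_0 \in \CC$ carefully from the spectral distribution of $a$; then construct unitaries out of partial isometries that implement a structural symmetry of $b := a - \lambda_0 \mathbf{1}$ (up to small errors); and finally evaluate $|[b,u]|$ using the spectral theorem for the normal operator $b$.

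For part~(3) on infinite factors, I would generalise the symmetry construction of \cite[Theorem~B.1]{BBS} from the self-adjoint to the normal setting. First, choose $\lambda_0$ so that every open half-plane in $\CC$ through $\lambda_0$ carries a spectral projection of $a$ which is infinite in $\calM$; such a $\lambda_0$ exists by a sweeping/continuity argument using infiniteness of $\calM$. Next, partition the spectrum of $b$ into narrow radial strips and pair each strip with its image under $z\mapsto -z$ by a partial isometry (available because both sides of the pairing carry infinite, hence Murray--von Neumann equivalent, projections). Summing these partial isometries together with their adjoints yields a self-adjoint unitary $u_\varepsilon$ with $u_\varepsilon b u_\varepsilon \approx -b$ up to error $\varepsilon$, and a direct spectral calculation then gives $|[b,u_\varepsilon]| \ge (1-\varepsilon)(|b|+u_\varepsilon|b|u_\varepsilon)$.

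For parts~(1) and (2) on finite factors, I would replace the involution $z \mapsto -z$ by the threefold rotation $z \mapsto \omega z$ with $\omega = e^{2\pi i/3}$. The constant $\sqrt{3}/2 = \tfrac{1}{2}|1-\omega|$ is already evident from the elementary model identities $|[b,u]| = \sqrt{3}\,|b|$ and $u|b|u^* = |b|$ whenever $ubu^* = \omega b$. For a II$_1$-factor I would pick $\lambda_0$ and three sectors $S_0,S_1,S_2$ of angular width $2\pi/3$ around $\lambda_0$ so that the spectral projections $p_i = \chi_{S_i}(b)$ all carry the same trace; this is arranged by varying the rotational position of the sectors and invoking an intermediate value argument, using that a II$_1$-factor is diffuse. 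The three $p_i$ are then pairwise Murray--von Neumann equivalent, and a cyclic unitary $u$ permuting them with appropriate phase corrections implements $ubu^* \approx \omega b$ sector by sector. For part~(1), divisibility of $n$ by $3$ forces unequal sector sizes in I$_n$, and the resulting combinatorial optimization is precisely what produces the bounds $\Lambda_n \le C \le \tfrac{1}{2}\widetilde{\Lambda}_n$; optimality would be demonstrated via an explicit normal $a$ whose spectrum sits at $n$-th roots of unity with equal weights.

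The main obstacle I foresee lies in the II$_1$ computation: a general normal $b$ has nontrivial angular variation within each sector $S_i$, so the clean identity $ubu^* = \omega b$ holds only approximately, and the off-diagonal contributions to $|[b,u]|^2$ must be carefully bounded so that they do not erode the $\sqrt{3}/2$ constant. A secondary challenge is sharpness: to show that $\sqrt{3}/2$ cannot be improved (for part~(2)) I would construct a concrete normal $a$ with three-point spectrum at the cube roots of unity and carefully trace through the sector argument to verify that equality is attained in the limit.
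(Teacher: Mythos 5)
Your overall skeleton (reduce to a statement about the spectral distribution, build unitaries from partial isometries pairing spectral pieces, use a cube-roots-of-unity example for sharpness) points in the right direction, but the two core constructions you propose would fail. For the infinite case, the centering you ask for need not exist: if $a$ is self-adjoint, every open half-plane bounded by the real axis has spectral projection $0$, so there is no $\lambda_0$ all of whose open half-planes carry infinite spectral projections. More fundamentally, pairing each radial strip with its image under $z\mapsto -z$ presupposes a point symmetry of the spectral distribution about $\lambda_0$ which a general normal operator simply does not have (a strip and its reflection can have inequivalent, even zero, spectral projections --- think of a spectrum contained in three rays at mutual angles $\frac{2\pi}{3}$), so $u_\varepsilon b u_\varepsilon\approx -b$ cannot be arranged. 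The paper's \cref{t_inf} needs no symmetry at all: $\lambda_0$ is chosen among ``densification points'' (every neighbourhood has spectral projection $\sim\mathbf{1}$), and pieces supported outside the disk of radius $\gamma_n$ around $\lambda_0$ are paired with \emph{equivalent} pieces inside the much smaller disk of radius $\frac{\varepsilon}{2}\gamma_n$, so that $u_\varepsilon a u_\varepsilon$ is nearly $\lambda_0\mathbf{1}$ on the relevant corner; the inequality then comes from the operator triangle inequality (\cref{t_AAP_21}, via \cref{l3_inf}), which is also why the conjugating unitary $w_\varepsilon$ appears --- your claimed ``direct spectral calculation'' without it relies on the exact symmetry you cannot produce.

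For the finite case, cyclically permuting three sectors of angular width $\frac{2\pi}{3}$ cannot yield the constant $\frac{\sqrt{3}}{2}$: two spectral values lying near a common boundary ray of adjacent sectors subtend an arbitrarily small angle at $\lambda_0$, so $|z_1-z_2|$ can be far below $\frac{\sqrt{3}}{2}(|z_1-\lambda_0|+|z_2-\lambda_0|)$; the ``off-diagonal contributions'' you flag are not a technicality but the collapse of the method (and the existence of three equal-trace sectors from a one-parameter intermediate value argument, with an unspecified $\lambda_0$, is itself not established). The paper's crux is instead \cref{l2}: a balanced three-line bisection of the spectral distribution at a point $z_0$, after which the six sextants are matched in \emph{opposite} pairs by an (almost) involution --- this forces the pointwise angle $\geq\frac{2\pi}{3}$, with 3- and 5-cycles repairing the parity problems for odd $n$ (\cref{theorem-transformation-with-bound}); the result is then transferred to the factor via the spectral theorem in \cref{I_n} or via the measure-preserving embedding of \cref{t_21} together with the conjugate-sets \cref{l_conj} in \cref{t_211}. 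Finally, your plan for sharpness only checks that \emph{your} construction attains $\frac{\sqrt{3}}{2}$, which does not bound the optimal constant over all $u,v,w,z_0$; the paper's upper bounds use $\|[a,u]\|_1\leq\|[a,u]\|_2$ plus the Hoffman--Wielandt/Birkhoff-polytope reduction (\cref{prop:restrict-to-permutation-matrix}) to permutation matrices, evaluated on the near-balanced three-point spectrum of \cref{lemma:optimality-triangle-function}; an example with equal weights at the $n$-th roots of unity would give a far weaker bound than $\frac{1}{2}\widetilde{\Lambda}_n$.
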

	This theorem can be applied to obtain norm estimates for derivations $\delta:\calM\to \calM_*$ and extend results of \cite{BBS}. Specifically, we consider the case that $\calM$ is finite, and $\tau$ is a faithful normal tracial state on $\calM$. In this case $\calM_*$ is isomorphic to $L_1(\calM,\tau)$ (see e.g. \cite[Lemma 2.12 and Theorem 2.13]{Tak2}). 
	As an application of inequality \eqref{bs_t1_2_1}, it was proved in \cite[Theorem 1.1]{BBS} that, for  $a=a^*\in L_1(\calM,\tau)$, we have
	\begin{align}\label{bbs2_1}
		\left\| \delta_a \right\|_{\calM\to L_1(\calM,\tau)} = 2 \min_{z\in Z(S(\calM))} \left\| a-z \right\|_1
	\end{align} (here $Z(S(\calM))$ denotes the center of $S(\calM)$) and that the minimum is attained at a self-adjoint element  $c_a=c_a^* \in L_1(\calM,\tau)\cap Z(S(\calM))$.
In the present paper, using \cref{theorem_main}, we show that for a finite factor $\calM$ and for an arbitrary normal measurable $a\in L_1(\calM,\tau)$, the estimate
	\begin{align}\label{eq:intro:derivation-estimate}
	\sqrt{3}\min_{z\in \CC}\left\| a-z \right\|_1\leq \left\| \delta_a \right\|_{\calM\to L_1(\calM,\tau)}\leq 2\min_{z\in \CC}\left\| a-z \right\|_1
	\end{align} holds (see  \cref{tderivationbound}). In \cref{section:estimates-for-derivations} we show that the estimates given in \eqref{eq:intro:derivation-estimate} are sharp. In particular, in \cref{tderivationbound} we demonstrate that for any finite II$_1$-factor $\calM$ there exists a normal $a\in \calM$ such that the derivation $\delta_{a}$ is non-zero and satisfies $\|\delta_{a}\|_{\calM\to L_1(\calM,\tau)}=\sqrt{3}\min_{z\in \CC}\|a-z\|_1$, whereas it follows from \cref{t_inf} and \cite[Theorem 3.1]{BBS} that for any infinite factor $\calM$ formula \eqref{bbs2_1} holds for an arbitrary normal $a\in L_1(\calM,\tau)$.\\	
	
	Finally, we remark that \eqref{eq:intro:derivation-estimate} is in fact an estimate for the $L_1$-diameter of the unitary orbit $\calO(a) = \{uau^*: u\in \calU(\calM)\}$ of $a$ as $\Diam_{L_1(\calM,\tau)}(\calO(a)) = \|\delta_{a}\|_{\calM\to L_1(\calM,\tau)}$, see end of Section \ref{section:estimates-for-derivations}.

   \subsection{Structure and overview}
   In \cref{section:preliminaries} we introduce standard terminology, recall the definitions of (locally) measurable operators and prove \cref{p_AAP_21} and \cref{t_AAP_21} that extend some results to locally measurable operators. In \cref{section:designations} we introduce the constants $\Lambda_n$ and $\widetilde{\Lambda}_n$ for $n\in \NN\cup \{\infty\}$ that will be used throughout the paper. In \cref{section:technical-theorems} our main result is \cref{theorem-transformation-with-bound}, which is closely related to the constants $\Lambda_n$ and to the operator inequality \eqref{bs_t1_2_2}. In \cref{section:estimates-in-finite-factors} we use this result to obtain \cref{t3} which establishes the operator inequality of \cref{theorem_main} for normal elements in finite factors. In \cref{section:estimates-in-infinite-factors} we obtain the inequality of \cref{theorem_main} for normal locally measurable operators affiliated with an infinite factor, see \cref{t_inf}.
   In \cref{section:estimates-for-derivations} we apply our results to obtain the estimate \eqref{eq:intro:derivation-estimate} for the norm of derivations $\delta_a:\calM\to L_1(\calM,\tau)$ for normal $a\in L_1(\calM,\tau)$, and we show the given bounds are optimal in some cases. 
    In the Appendix we prove two technical results regarding the constants $\Lambda_n$ and $\widetilde{\Lambda}_n$. In particular, \cref{t_techmain} determines the exact value of $\Lambda_n$ for $n\not=4$.

	\section{Preliminaries}\label{section:preliminaries}
	We establish notation on von Neumann algebras and (locally) measurable operators (for a thorough discussion of these topics we refer to \cite{DdPS,FK}). Furthermore, we prove two results, \cref{p_AAP_21} and \cref{t_AAP_21}, which generalize a known result (a type of triangle inequality for operators) to locally measurable operators. 
	
    Let $\calM$ be a von Neumann algebra on a Hilbert space $H$ with unit $\textbf{1}$. We let $\calU(\calM)$ be the group of unitaries in $\calM$, let $\calP(\calM)$ be the lattice of projections in  $\calM$ and let  $Z(\calM)$ be the center of $\calM$. 
    
	Recall that two projections $e, f \in  \calM$ are called \textit{Murray-von Neumann equivalent} (denoted by $e\sim f$)  if there exists an element
    $u \in \calM$ such that $u^\ast  u = e$ and $u u^\ast  = f.$
    A projection $p \in \calM$ is called \textit{finite}, if the
    conditions $q \leq  p$ and $q\sim p$  imply that $q = p.$

	Let $x : \dom(x) \to  H$ be a densely defined closed linear operator 
    (the domain $\dom(x)$ of $x$ is a linear subspace in $H$). Then $x$ is said to be \textit{affiliated} with $\calM$
    if $yx \subset  xy$ for all $y$ from the commutant $\calM'$  of the algebra $\calM.$
    A linear operator $x$ affiliated with $\calM$ is called \textit{measurable} with respect to $\calM$ if
    $\chi_{(\lambda,\infty)}(|x|)$ is a finite projection for some $\lambda>0.$ Here
    $\chi_{(\lambda,\infty)}(|x|)$ is the  spectral projection of $|x|$ corresponding to the interval $(\lambda, +\infty).$
    We denote the set of all measurable operators by $S(\calM).$
    Clearly, $\calM$  is a subset of $S(\calM).$
    It is clear that if $\calM$ is a factor of type $I$ or $III$ then $S(\calM)=\calM$.

    Let $x, y \in  S(\calM).$ It is well known that $x+y$ and
    $xy$ are densely-defined and preclosed
    operators \cite{DdPS}. We define the \textit{strong sum} respectively the \textit{strong product} of $x$ and $y$ as the closures of these operators, which we simply also denote by $x+y$ and $xy$ respectively. When $S(\calM)$ is equipped with the operation of strong sum, operation of strong product, and the $*$-operation, it becomes a unital $*$-algebra over $\CC$.
    It
    is clear that $\calM$  is a $\ast$-subalgebra of $S(\calM)$. Moreover, in the case that $\calM$ is finite, every operator affiliated with $\calM$ becomes measurable.
    In particular, the set of all affiliated operators then forms a $\ast$-algebra, which coincides with
    $S(\calM).$
    Following \cite{KL, KLT}, in the case when the von Neumann algebra $\calM$ is finite, we refer to the algebra $S(\calM)$ as the Murray-von Neumann algebra associated with $\calM$.

    Let $\calM$ be semi-finite and let $\tau$ be a faithful normal semi-finite trace on $\calM.$  A linear operator $x$ affiliated with $\calM$ is called \textit{$\tau$-measurable} with respect to $\calM$ if
    $\tau(\chi_{(\lambda,\infty)}(|x|))<\infty$ for some $\lambda>0.$ We denote the set of all $\tau$-measurable operators by $S(\calM,\tau).$ The set $S(\calM,\tau)$ is a $*$-subalgebra of $S(\calM)$ that contains $\calM$.
    Consider the topology  $t_{\tau}$ of convergence in measure or \textit{measure topology}
    on $S(\calM,\tau),$ which is defined by
    the following neighborhoods of zero:
    $$
    N(\varepsilon, \delta)=\{x\in S(\calM,\tau): \exists \, e\in \calP(\calM), \, \tau(\mathbf{1}-e)\leq\delta, \, xe\in
    \calM, \, \|xe\|_\calM\leq\varepsilon\},
    $$
    where $\varepsilon, \delta$
    are positive numbers.  The algebra $S(\calM,\tau)$ equipped with the measure topology is a topological $*$-algebra and $F$-space \cite{DdPS}.

    A linear operator $x$ affiliated with $\calM$ is called \textit{locally measurable} with respect to $\calM$ if there exist increasing central projections $(p_n)$ in $\calP(Z(\calM))$ converging strongly to $\textbf{1}$, and such that $xp_n\in S(\calM)$. The set $LS(\calM)$ of locally measurable operators forms a $*$-algebra with respect to the operations of a strong sum and a strong product.
    It is clear that if $\calM$ is a factor then $LS(\calM)=S(\calM)$.

    Let $x\in LS(\calM)$. Denote by $\ll(x)$ - the left carrier of $x$, by $\rr(x)$ - the right carrier of $x$ and $\ss(x)=\ll(x)\vee \rr(x)$. If $x=u|x|$ is the polar decomposition of $x$, then $\ll(x)=uu^*$ and $\rr(x)=u^*u$.
    We denote $\Re(x) = \frac{x+x^*}{2}$ and $\Im(x) = \frac{x-x^*}{2i}$ for respectively the real and imaginary part of $x$.
    For a self-adjoint $x\in LS(\calM)$ we denote by $x_+$ (respectively, $x_-$) its positive (respectively negative) part, defined by $x_+=\frac{x+|x|}{2}$ (respectively, $x_-=-\frac{x-|x|}{2}$). We note that $x_-$ and $x_+$ are orthogonal, that is $x_-x_+=0$.

   	We require \cref{t_AAP_21} which states a triangle inequality for operators $x\in LS(\calM)$. The statement is similar to \cite[Theorem 2.2]{AAP1982} where for operators $x\in \calM$ the result was shown with partial isometries instead of isometries (see also \cite[Lemma 4.3]{FK} and \cite[Lemma 4.15]{Hiai}).
    To prove \cref{t_AAP_21}, we will need the following statement which is similar to \cite[Proposition 2.1]{AAP1982}. 
    Here, $v\in \calM$ is called an \textit{isometry} if $v^*v=\mathbf{1}$.
    \begin{proposition}\label{p_AAP_21}
    	For each $x\in LS(\calM)$ there is an isometry $v\in\calM$ such that $\Re(x)_+\leq v|x|v^*$.
    \end{proposition}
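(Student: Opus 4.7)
The plan is to adapt \cite[Proposition 2.1]{AAP1982} to locally measurable operators and then upgrade the resulting partial isometry to an isometry.

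First I would establish the partial-isometry version. Writing the polar decomposition $x = u|x|$ in $LS(\mathcal{M})$, the Cauchy--Schwarz-type inequality $(u|x|^{1/2}-|x|^{1/2})(u|x|^{1/2}-|x|^{1/2})^*\geq 0$ expands (using $u|x|=x$) to $u|x|u^*+|x|\geq 2\Re(x)$. Compressing by $p := \ss(\Re(x)_+)$ yields the two-term bound $pu|x|u^*p+p|x|p\geq 2\Re(x)_+$. Setting $\tilde v:=\tfrac{1}{2}(pu+p)\in\mathcal{M}$, a direct computation gives $\tilde v|x|\tilde v^*=\tfrac{1}{4}(pu|x|u^*p+p|x|p)+\tfrac{1}{2}\Re(x)_+\geq \Re(x)_+$. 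Taking the polar decomposition $\tilde v|x|^{1/2}=v_0\,|\tilde v|x|^{1/2}|$ in $LS(\mathcal{M})$ with $v_0\in\mathcal{M}$ a partial isometry, and using that $\tilde v^*\tilde v\leq\mathbf{1}$ (since $\|\tilde v\|\leq 1$), one obtains $|x|^{1/2}\tilde v^*\tilde v|x|^{1/2}\leq |x|$ and hence $\tilde v|x|\tilde v^*=v_0\bigl(|x|^{1/2}\tilde v^*\tilde v|x|^{1/2}\bigr)v_0^*\leq v_0|x|v_0^*$. Combining gives $\Re(x)_+\leq v_0|x|v_0^*$.

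Second, to upgrade $v_0$ to an isometry, the crucial additional property to arrange is $v_0^*v_0\geq \ss(|x|)$. Assuming this, I would pick a partial isometry $w\in\mathcal{M}$ with $w^*w=\mathbf{1}-v_0^*v_0$ and $ww^*\leq\mathbf{1}-v_0v_0^*$ (existence via Murray--von Neumann comparison, using $v_0^*v_0\sim v_0v_0^*$) and set $v:=v_0+w$. Then $v^*v=\mathbf{1}$, and since $w^*w\leq \mathbf{1}-\ss(|x|)=\ker|x|$ we have $w|x|=|x|w^*=0$, so every cross-term in $v|x|v^*=v_0|x|v_0^*+v_0|x|w^*+w|x|v_0^*+w|x|w^*$ vanishes, giving $v|x|v^*=v_0|x|v_0^*\geq\Re(x)_+$, as required.

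The main obstacle is arranging $v_0^*v_0\geq \ss(|x|)$. The polar-decomposition step above in general only gives $v_0^*v_0\leq\ss(|x|)$, with possible strict inequality when $\tilde v$ is not injective on the range of $|x|^{1/2}$. This is remedied by adjoining a further partial isometry to $v_0$ whose initial support lies in $\ss(|x|)-v_0^*v_0$ and whose final support is disjoint from $v_0v_0^*$, chosen so that the added cross-terms in the resulting $v_0|x|v_0^*$ do not destroy the inequality $\Re(x)_+\leq v_0|x|v_0^*$. The fact that we are working in $LS(\mathcal{M})$ (so that polar decomposition, strong products, and square roots of positive operators are available, cf.\ \cite{DdPS}) provides the flexibility needed to carry out this enlargement within $\mathcal{M}$.
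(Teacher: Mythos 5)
Your first step is correct and self-contained: from $x=u|x|$ one gets $u|x|u^*+|x|\ge 2\Re(x)$, compression by $p=\ss(\Re(x)_+)$ gives $pu|x|u^*p+p|x|p\ge 2\Re(x)_+$, the contraction $\tilde v=\tfrac12(pu+p)$ satisfies $\tilde v|x|\tilde v^*\ge\Re(x)_+$, and the polar decomposition of $\tilde v|x|^{1/2}$ yields a partial isometry $v_0\in\calM$ with $\Re(x)_+\le v_0|x|v_0^*$; all of this is valid in $LS(\calM)$ and is in fact a cleaner route to the partial-isometry version than citing \cite[Proposition 2.1]{AAP1982}. The proposal fails, however, at the isometry upgrade, which is the entire point of the proposition. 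First, the comparison you invoke is not available: $v_0^*v_0\sim v_0v_0^*$ does not imply $\mathbf{1}-v_0^*v_0\preceq\mathbf{1}-v_0v_0^*$ in a general von Neumann algebra (take $v_0$ the adjoint of the unilateral shift in $B(H)$: one complement is a rank-one projection, the other is $0$), so the partial isometry $w$ with $w^*w=\mathbf{1}-v_0^*v_0$ and $ww^*\le\mathbf{1}-v_0v_0^*$ need not exist. Second, and more seriously, the ``main obstacle'' you name, namely arranging $v_0^*v_0\ge\ss(|x|)$, is exactly the nontrivial content and is left unresolved: if you adjoin a partial isometry $w_1$ whose initial projection lies in $\ss(|x|)-v_0^*v_0$, then with $A=v_0|x|^{1/2}$ and $B=w_1|x|^{1/2}$ you get $v|x|v^*=(A+B)(A+B)^*$, and $AA^*\ge\Re(x)_+$ does not imply $(A+B)(A+B)^*\ge\Re(x)_+$ because of the cross terms $AB^*+BA^*$; no argument is given that a choice of $w_1$ ``not destroying the inequality'' exists, and working in $LS(\calM)$ gives no extra flexibility here.

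The paper's proof is organized precisely to avoid both difficulties: it constructs the isometry before proving the inequality. It sets $a=p(x+|x|)$, shows the left support of $a$ is exactly $p$, writes $a=w|a|$ with $ww^*=p$ and $w^*w=q$, proves the lattice fact $(\mathbf{1}-q)\wedge p=0$, and then uses the polar decomposition of $(\mathbf{1}-p)(\mathbf{1}-q)$ to produce $w_0$ with $w_0^*w_0=\mathbf{1}-q$ and $w_0w_0^*\le\mathbf{1}-p$; thus $v=w+w_0$ is an isometry with no appeal to Murray--von Neumann comparison, because $p$ and $q$ are geometrically tied together through the operator $a$. The inequality $\Re(x)_+\le v|x|v^*$ is then obtained by the AAP computation for this specific $v$. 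To salvage your approach you would need to replace your $v_0$ by a partial isometry whose initial and final supports stand in an analogous geometric relation; as written, the upgrade from partial isometry to isometry is a genuine gap.
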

    \begin{proof}
    	Let $p=\ss(\Re(x)_+),\ a=p(x+|x|)$. Then clearly $\ll(a)\leq p$. We show $p=\ll(a)$. Put $r=p-\ll(a)$ so that
    	$0=ra = rar=rxr+r|x|r$. Taking the real part of this equation gives $0=r\Re(x)r+r|x|r$.
    	Since $r\leq p$ we have $r\Re(x)_-r=0$ and therefore $r\Re(x)r=r\Re(x)_+r$. Then
    	$0=r\Re(x)r+r|x|r=r\Re(x)_+r+r|x|r$
    	and hence $r\Re(x)_+r=0$. Then as $(\Re(x)_+^{\frac{1}{2}}r)^*(\Re(x)_+^{\frac{1}{2}}r) = r\Re(x)r=0$, we obtain $\Re(x)_+^{\frac{1}{2}}r = 0$ and hence $\Re(x)_+r=0$. Therefore, $\Re(x)_+(\mathbf{1}-r) = \Re(x)_+$ which shows $(\mathbf{1}-r)\geq \ss(\Re(x)_+)=p$ and we conclude $r=0$, i.e. $p=\ll(a)$.
    	
    	Let $a=w|a|$ be the polar decomposition of $a$. Then $ww^*=p$. Put $q=w^*w$ and $s= (\mathbf{1}-q)\wedge p$. We show $s=0$. Indeed
    	$as=aqs=0$, thus $s(x+|x|)s=sas=0$ and taking the real part of this equation gives $s\Re(x)s+s|x|s=0$. As $s\leq p$ we have $s\Re(x)_-s=0$ so that $s\Re(x)s = s\Re(x)_+s$.
    	Again, by the same arguments as before, this implies $s\Re(x)_+s=0$ and subsequently $(\mathbf{1}-s)\geq p$. Thus $s\leq (\mathbf{1}-p)\wedge p=0$.
    	
    	Let $(\mathbf{1}-p)(\mathbf{1}-q)=w_0|(\mathbf{1}-p)(\mathbf{1}-q)|$ be the polar decomposition of $(\mathbf{1}-p)(\mathbf{1}-q)$. Then $w_0w_0^*\leq\mathbf{1}-p$ and $w_0^*w_0\leq\mathbf{1}-q$. Moreover, if $t=\mathbf{1}-q-w_0^*w_0=\mathbf{1}-q-\rr((\mathbf{1}-p)(\mathbf{1}-q))$ then we see $(\textbf{1}-q)t = t$ and
    	$$(\mathbf{1}-p)t=((\mathbf{1}-p)(\mathbf{1}-q))t=0\Rightarrow t\leq p\Rightarrow t\leq s=0.$$
    	Thus we obtain the equality $w_0^*w_0 = \textbf{1}-q$ and obtain that $v=w+w_0$ is an isometry in $\calM$.
    	
    	The inequality  $\Re(x)_+\leq v|x|v^*$ is proved in the same way as in the proof of \cite[Proposition 2.1]{AAP1982} (the monotonicity of the square root function follows from \cite[Corollary 2.2.28]{DdPS}).
    \end{proof}
	The proof of \cref{t_AAP_21} is exactly the same as the proof of \cite[Theorem 2.2]{AAP1982}, but instead of \cite[Proposition 2.1]{AAP1982} we use \cref{p_AAP_21} above. We include the proof for completeness.
    \begin{theorem}\label{t_AAP_21}
    	For any $x,y\in LS(\calM)$ there are isometries $v,w\in\calM$ such that
    	$$|x+y|\leq v|x|v^*+w|y|w^*.$$
    \end{theorem}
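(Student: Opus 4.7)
The plan is to mirror the proof of \cite[Theorem 2.2]{AAP1982}, with \cref{p_AAP_21} in the role of their partial-isometry statement so that the output comes out as genuine isometries rather than partial isometries. First I would take the polar decomposition $x+y = u|x+y|$ in $LS(\calM)$, which gives $|x+y| = u^*(x+y) = u^*x + u^*y$. Since the left-hand side is self-adjoint, passing to real parts yields
$$|x+y| = \Re(u^*x) + \Re(u^*y).$$

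Next I would use the elementary inequality $z \leq z_+$ for any self-adjoint $z \in LS(\calM)$ (which follows from $z = z_+ - z_-$ with $z_- \geq 0$) to deduce
$$|x+y| \leq \Re(u^*x)_+ + \Re(u^*y)_+.$$
Applying \cref{p_AAP_21} to $u^*x$ and $u^*y$ separately then produces isometries $v,w \in \calM$ such that $\Re(u^*x)_+ \leq v|u^*x|v^*$ and $\Re(u^*y)_+ \leq w|u^*y|w^*$.

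To close the argument I would replace $|u^*x|$ by $|x|$ (and $|u^*y|$ by $|y|$). Since $uu^* \leq \mathbf{1}$, we have $|u^*x|^2 = x^* uu^* x \leq x^*x = |x|^2$, and the monotonicity of the square root on positive locally measurable operators (already invoked in \cref{p_AAP_21} via \cite[Corollary 2.2.28]{DdPS}) gives $|u^*x| \leq |x|$. Conjugation by isometries preserves the order, so $v|u^*x|v^* \leq v|x|v^*$ and likewise for $w$, which assembles into the desired inequality.

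I do not anticipate a substantive obstacle: the entire argument carries over almost verbatim from the bounded AAP setting once \cref{p_AAP_21} is in hand. The only point requiring care is that polar decomposition, real parts, orthogonality of positive and negative parts, and operator monotonicity of the square root all behave correctly for unbounded locally measurable operators, all of which are standard in the DdPS framework. The substantive work was already done in proving \cref{p_AAP_21}, where the extension $w_0$ of the partial isometry $w$ to a full isometry $v = w + w_0$ was engineered precisely so that this downstream deduction goes through.
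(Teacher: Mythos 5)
Your argument is correct and is essentially the paper's own proof: polar decomposition $x+y=u|x+y|$, the identity $|x+y|=\Re(u^*x)+\Re(u^*y)$, the bound $|u^*x|\leq|x|$ (and likewise for $y$), and an application of \cref{p_AAP_21} to $u^*x$ and $u^*y$. The only cosmetic difference is that you make the intermediate step $\Re(u^*x)\leq\Re(u^*x)_+$ explicit, which the paper's inequality chain uses implicitly.
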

    \begin{proof}
     We write the polar decomposition $x+y = u|x+y|$. Then 
    	\begin{align}
    		|x+y| = \frac{1}{2}(u^*(x+y) + (x+y)^*u) = \Re(u^*x) + \Re(u^*y)
    	\end{align}
    Furthermore, $|u^*x| = (x^*u^*ux)^\frac{1}{2} \leq \|u\|(x^*x)^{\frac{1}{2}} \leq |x|$ and similarly $|u^*y|\leq |y|$. Now apply \cref{p_AAP_21} to $u^*x$ and to $u^*y$ to obtain isometries $v,w\in \calM$ so that
    \begin{align}
    	|x+y| = \Re(u^*x) + \Re(u^*y)\leq v|u^*x|v^* + w|u^*y|w^* \leq v|x|v^* + w|y|w^*
    \end{align}
    
    \end{proof}

	\section{{Constants $\Lambda_n$ and $\widetilde{\Lambda}_n$}}\label{section:designations}
    For $n\in\NN$ we denote by $(\Omega_n,\mu_n)$ the set $\{1,2,\ldots,n\}$ equipped with the normalized counting measure, and by $(\Omega_\infty,\mu_\infty)$ we denote the interval $[0,1]$ equipped with Lebesgue measure. We will moreover write $S(\Omega_n)$ for the set of {complex measurable functions on $\Omega_n$, which is simply the collection of all $n$-tuples of complex numbers}. We write $\Aut_n$  for the automorphism group of $(\Omega_n,\mu_n)$, $n\in\mathbb{N}\cup\{\infty\}$, where automorphism is defined as follows:
	\begin{definition}
		Let $(X_1,\mu_1)$ and $(X_2,\mu_2)$ be measure spaces. We will say that a map $T$ is an isomorphism between $X_1$ and $X_2$ if $T$ is a measurable bijective map $T:N_1\to N_2$ between two sets $N_1\subseteq X_1$ and $N_2\subseteq X_2$ of full measure, and such that moreover $T^{-1}$ is also measurable, and $\mu_1\circ T^{-1} = \mu_2$. Whenever $(X_1,\mu_1)=(X_2,\mu_2)$ we will call $T$ an automorphism.
	\end{definition}
	Let $n\in \NN\cup \{\infty\}$. We now introduce two constant $\Lambda_n$ and $\widetilde{\Lambda}_n$ as follows. Let $g\in S(\Omega_n)$, $T\in \Aut_n$, $z\in\CC$, and put $$\Lambda(g,T,z)=\ess\inf\dfrac{|g-g\circ T|}{|g-z|+|g\circ T-z|},$$
where we assume $\frac{0}{0}=1$. By the triangle inequality we have $|g-g\circ T|\leq |g-z| + |g\circ T-z|$ which shows $\Lambda(g,T,z)\leq 1$ for all $g,T,z$.
We put $$\Lambda(g)=\sup\{\Lambda(g,T,z):\ T\in \Aut_n, z\in\CC\}$$
and define $\Lambda_n$ by 
\begin{align}\label{eq:definition_lambda_n}
	\Lambda_n=\inf_{g\in S(\Omega_n)}\Lambda(g).
\end{align}
For $n>1$ we define $\widetilde{\Lambda}_n$ by setting
	\begin{align}\label{eq:definition-tilde-lambda_n}
	\widetilde{\Lambda}_n = \begin{cases}
		2 & \text{if} \ n=2,\ n=4 \\
		\sqrt{3} & \text{if}\ n=3k,\\
		\frac{2\sqrt{3}}{\sqrt{\frac{3k-3}{3k+1}} + \frac{3k+3}{3k+1}} & \text{if}\ n=3k+1, \ n\not=4\\
		\frac{2\sqrt{3}}{\sqrt{\frac{3k+6}{3k+2}} + \frac{3k}{3k+2}} & \text{if}\ n=3k+2,\\
		\sqrt{3} & \text{if}\ n=\infty.
	\end{cases}.
\end{align}
In the Appendix we will prove two results on the constants $\Lambda_n$ and $\widetilde{\Lambda}_n$.
In \cref{t_techmain} we will precisely determine $\Lambda_n$ for all values except for $n=4$. It turns out that 
\begin{align}\label{eq:estimates-lambda}
	\Lambda_1 = \Lambda_2 = 1, \quad \text{ and }\quad \frac{\sqrt{3}}{2}\leq \Lambda_4\leq 1, \quad \text{ and }\quad  \Lambda_n = \frac{\sqrt{3}}{2} \text{ for } n\not\in \{1,2,4\}.
\end{align}
We observe that this implies that $2\Lambda_n\leq \widetilde{\Lambda}_n$ for $n>1$ with equality when $n\equiv 0 \mod 3$ or $n=\infty$  and that moreover $\lim_{n\to\infty}2\Lambda_n = \sqrt{3} = \lim\limits_{n\to\infty}\widetilde{\Lambda}_n$.

We denote the diameter of a set $A\subseteq \CC$ by $\Diam(A) := \sup_{z,w\in A}|z-w|$.
In  \cref{lemma:optimality-triangle-function} we will show for $n>1$  that there exists $g\in L_{\infty}(\Omega_n)$ with $\Diam(g(\Omega_n))=1$ and $\widetilde{\Lambda}_n = \sup_{z\in \CC}\frac{1}{\|g-z\|_1}$, which will be used throughout the text.

	\section{Technical result} \label{section:technical-theorems}
This section is devoted to the proof of \cref{theorem-transformation-with-bound}, which is closely connected to the operator inequality \eqref{bs_t1_2_2} and to the constants $\Lambda_n$.
To fully state the result we first give the following definition:
\begin{definition}\label{d_con}
	Let $z\in\CC,\ 0\leq\alpha\leq\pi$. The sets $A,B\subset\CC$ will be called \textbf{$(z,\alpha)$-conjugate} if there are two lines in $\CC$ that intersect at the point $z$ at an angle $\alpha$, such that the sets $A$ and $B$ lie in opposite closed corners with the vertex $z$ and the magnitude $\alpha$ (see  \cref{fig:conjugate-sets})
\end{definition} \
	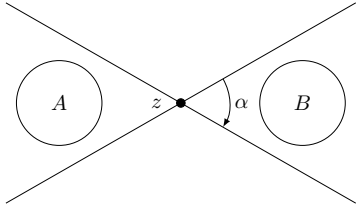
\begin{figure}[h!]
	\scalebox{0.8}{
		\begin{tikzpicture}
			% place nodes
			\node at (-0.4,0) 	  	(z0) {$z$};
			\node at (-3,2*0.866)  		(L1A) {};
			\node at (3,-2*0.866)   	(L1B) {};
			\node at (-3,-2*0.866)   		(L2A) {};
			\node at (3,2*0.866)  	 	(L2B) {};
			\node at (1,0) 	  	(alpha) {$\alpha$};

			% draw edges
			\draw[-] (L1A) -- node[xshift =-0.2cm, yshift=-0.2cm] {} (L1B);
			\draw[-] (L2A) -- node[xshift =0.2cm, yshift=-0.2cm] {} (L2B);

			\filldraw (0,0) circle (2pt);
			\draw (2,0) circle (20pt);
			\node at (2,0) (B) {$B$};
			
			\draw (-2,0) circle (20pt);
			\node at (-2,0) (A) {$A$};
			
			\draw[-latex] (30:0.8) arc  (30:-30:0.8) ;
			
	\end{tikzpicture}}
	\caption{Two $(z,\alpha)$-conjugate sets $A$ and $B$ are depicted.}
	\label{fig:conjugate-sets}
\end{figure}

\begin{remark}\label{r1}
	Let the sets $A,B$ be  $(z,\alpha)$-conjugate, $a\in A,\ b\in B$. It is easy to see that $$|a-b|\geq (|a-z|+|b-z|)\cos\frac{\alpha}{2}.$$
	Indeed, it is enough to consider the projections of points $a,b$ on the bisector of the angle $\alpha$.
\end{remark}

\begin{theorem}\label{theorem-transformation-with-bound}
	Let $g\in S(\Omega_n),\ n\in\NN\cup\{\infty\}$. Then there exists a $z_0\in \CC$ and an automorphism $T$ of $\Omega_n$ such that
	\begin{align}\label{e1}
		|g\circ T - g|\geq  \frac{\sqrt{3}}{2}\big(|g - z_0| + |g\circ T -z_0|\big).
	\end{align}
 i.e.
\begin{align}\label{e2}
	\Lambda(g)\geq\frac{\sqrt{3}}{2}.
\end{align}
Moreover, the set $\Omega_n$ can be partitioned  into disjoint measurable sets as follows:
\begin{enumerate}[(i)]
	\item if $n$ is even  or $n=\infty$ then  there is a partition $\{X_1\}\cup \{X_2^{m,i}: 1\leq m, 1\leq i\leq 2\}$ so that $g(X_1)\subset\{z_0\},\ \mu_n(X_2^{m,1})=\mu_n(X_2^{m,2})$ and the sets $g(X_2^{m,1}),\ g(X_2^{m,2})$ are $(z_0,\frac{\pi}{3})$-conjugate for $m=1,2,\dots$; Moreover, denoting $X_2 = \Omega_n\setminus X_1$ we have that  $T^k|_{X_k}=\Id_{X_k}$ for $k=1,2$. 
	\item if $n$ is odd then  there is a partition $X_1,X_2,X_3,X_5$, so that $T^k|_{X_k}=Id_{X_k},\ k=1,2,3,5$.
\end{enumerate}
If $n<\infty$ then there exists $z_0\in \CC$ and $T\in Aut_n$ so that
\begin{align}\label{e3}
	\Lambda(g,T,z_0)=\Lambda(g).
\end{align}
\end{theorem}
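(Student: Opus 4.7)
The strategy rests on the geometric observation from Remark \ref{r1} that whenever two points $a, b \in \CC$ lie in $(z_0, \pi/3)$-conjugate wedges one has $|a - b| \geq \frac{\sqrt{3}}{2}(|a - z_0| + |b - z_0|)$, and that three points at equal distance from $z_0$ with mutual angular separation $2\pi/3$ (i.e.\ vertices of an equilateral triangle centered at $z_0$) satisfy the same inequality for each consecutive pair, and similarly for five points forming a regular pentagon about $z_0$ when $T$ rotates by $4\pi/5$ (since $2\sin(2\pi/5) \geq \sqrt{3}$). The plan is therefore to find $z_0 \in \CC$ and $T \in \Aut_n$ so that for a.e.\ $x$ one is in one of three situations: $g(x) = z_0 = g(T(x))$, or the pair $(g(x), g(T(x)))$ lies in opposite $(z_0, \pi/3)$-wedges, or $x$ belongs to a short $T$-orbit whose $g$-image forms such a small symmetric configuration about $z_0$.

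For the even-or-infinite case (i) I would fix an orientation $\theta_0 \in [0, \pi/3)$ and cut $\CC \setminus \{z_0\}$ into six closed sectors $S_1, \ldots, S_6$ of angle $\pi/3$ about $z_0$, then seek $(z_0, \theta_0)$ satisfying the three balancing conditions $\mu_n(g^{-1}(S_i)) = \mu_n(g^{-1}(S_{i+3}))$ for $i = 1, 2, 3$. The degrees of freedom ($z_0 \in \CC$ is $2$, $\theta_0 \in S^1$ is $1$) match the three equations, so I would establish existence via a topological/degree-theoretic argument applied to the continuous map $(z, \theta) \mapsto (F_i(z, \theta) - F_{i+3}(z, \theta))_{i=1,2,3}$ from $\CC \times S^1$ to $\RR^3$, handling atoms of the pushforward $g_* \mu_n$ by perturbation. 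Once such $z_0$ is found, for each $i \in \{1,2,3\}$ a measure-preserving bijection $T_i : g^{-1}(S_i) \to g^{-1}(S_{i+3})$ exists because $(\Omega_n, \mu_n)$ is a standard probability space; define $T$ to coincide with $T_i$ on $g^{-1}(S_i)$, with $T_i^{-1}$ on $g^{-1}(S_{i+3})$, and with $\Id$ on $X_1 := g^{-1}(\{z_0\})$. Then $T^2 = \Id$ outside $X_1$, yielding the partition of (i); the bound \eqref{e1} holds pointwise by Remark \ref{r1}.

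For the odd case (ii), perfect sector balance with $2$-cycles may be obstructed because the measure $\mu_n$ charges singletons by $1/n$; the residual mass must be packaged into odd-length cycles. I would argue combinatorially that the residue can always be handled by $3$-cycles (on which $T$ cyclically rotates through three values approximating an equilateral triangle about $z_0$) and $5$-cycles (rotation by $4\pi/5$ through five values on a regular pentagon about $z_0$); the rotation angles $2\pi/3$ and $4\pi/5$ both lie in $[2\pi/3, 4\pi/3]$, so consecutive pairs in these cycles satisfy the $(z_0,\pi/3)$-conjugate estimate. This is where the constants $\widetilde{\Lambda}_n$ enter and where the dependence on $n \bmod 3$ arises. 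Finally for the attainment claim \eqref{e3} with $n < \infty$, I observe that $\Aut_n$ is finite, that $z \mapsto \Lambda(g, T, z)$ is continuous on $\CC$, and that it tends to $0$ as $|z| \to \infty$ (numerator bounded, denominator $\asymp 2|z|$); hence the supremum is attained.

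The main obstacle is the simultaneous existence of a $(z_0, \theta_0)$ balancing all three opposite-sector pairs — a genuine topological argument rather than a calculation — together with the combinatorial bookkeeping in (ii) showing that cycles of lengths $1,2,3,5$ suffice to absorb the parity defect for odd $n$. These two points carry the geometric content of the theorem, while the measure-isomorphism construction of $T$ and the attainment claim are essentially routine once the structural picture is in place.
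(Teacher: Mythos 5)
Your plan identifies the right geometric mechanism (angle $\geq 2\pi/3$ at $z_0$ forces the $\tfrac{\sqrt3}{2}$ inequality, via what is essentially \cref{l1}), but the step you yourself flag as the main obstacle is a genuine gap, and as formulated it is not merely unproved but problematic. You want $(z_0,\theta_0)$ with \emph{exact} opposite-sector balance $\mu_n(g^{-1}(S_i))=\mu_n(g^{-1}(S_{i+3}))$, $i=1,2,3$, to be produced by a degree-theoretic argument. First, no such argument is given, and the map $(z,\theta)\mapsto(F_i-F_{i+3})_i$ is discontinuous whenever $g_*\mu_n$ has atoms, so "perturbation" does not obviously repair it. Second, exact balance with sectors that honestly partition the plane can simply fail: if $g$ takes the three values $e^{2\pi i k/3}$, $k=0,1,2$, each on a set of measure $\tfrac13$ (realizable with $n=6$ or $n=\infty$), then any balanced configuration forces atoms to sit on sector boundaries, and with \emph{closed} sectors the sets $g^{-1}(S_i)$ overlap, so your prescription "let $T$ be a measure isomorphism $g^{-1}(S_i)\to g^{-1}(S_{i+3})$ on each $i$" no longer defines a map (the same boundary mass is claimed by two of the $T_i$'s). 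Fixing this requires exactly the bookkeeping the paper does and you do not: the paper only proves the much weaker statement (\cref{l2}, an intermediate-value argument on the circle) that the three lines can be chosen to be simultaneous halving lines, then shows that the resulting discrepancies $t=\mu_n(P_j^+)-\mu_n(P_j^-)$ are \emph{equal for all four regions} $j=1,2,3,4$ (with $g(P_4^\pm)\subseteq\{z_0\}$), and re-pairs the excess pieces among themselves, using that among three points in the three alternating sectors some pair subtends an angle $\geq 2\pi/3$ and the leftover point can be matched with mass sent to $z_0$. That device is the heart of case (i) and is absent from your proposal; for $n=\infty$ the re-pairing additionally needs an exhaustion/maximality argument.

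The odd case is the second substantive gap: you assert "combinatorial bookkeeping" with $3$- and $5$-cycles whose images form an equilateral triangle or regular pentagon about $z_0$, but the leftover points are not in any regular position, and the actual work is to show that the residual set consists of exactly three or five points which can be cyclically ordered so that \emph{consecutive} images subtend angles $\geq 2\pi/3$ at $z_0$. In the paper this requires locating the three boundary points $\omega_1,\omega_2,\omega_3$, the case analysis on whether $z_0\in g(\Omega_n)$ or $z_0\in\Conv(g(Y_0))$, and, in the remaining case, a counting identity producing two further points $\omega_4,\omega_5$ in prescribed sectors before the $5$-cycle can be defined; none of this is visible in your sketch. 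Two smaller points: the constants $\widetilde\Lambda_n$ play no role in this theorem (they are used only for the upper bounds elsewhere in the paper), and your attainment argument for \eqref{e3} must address that $z\mapsto\Lambda(g,T,z)$ is not continuous at points where $g(\omega)=g(T\omega)=z$ under the convention $\tfrac00=1$; the paper instead argues by boundedness of a maximizing sequence $(z_m)$ and finiteness of $\Aut_n$.
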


The above theorem relates to the operator inequality \eqref{bs_t1_2_2} through functional calculus. This is best visible in the case of finite-dimensional factors, see \cref{I_n}.
Furthermore, we note that \cref{theorem-transformation-with-bound} provides a lower bound on the constants $\Lambda_n$. Indeed, given $g\in S(\Omega_n)$ the obtained $z_0$, $T$ are such that $\Lambda(g,T,z_0)\geq \frac{\sqrt{3}}{2}$. Hence $\Lambda_n\geq \frac{\sqrt{3}}{2}$ for all $n\in \NN\cup \{\infty\}$. In the Appendix, \cref{t_techmain}, it is proved that in fact $\Lambda_n = \frac{\sqrt{3}}{2}$ for $n=3$ and $n\geq 5$. This means that, for these values of $n$, the constant $\frac{\sqrt{3}}{2}$ in the above theorem is best possible (i.e. maximal so that for all $g\in S(\Omega_n)$ there exist $z_0, T$ satisfying \eqref{e1}).

The proof of \cref{theorem-transformation-with-bound} is somewhat technical and requires two lemmas, 
\cref{l1} and \cref{l2} 
We give a sketch of the proof. 
   Given a measurable function 
   $g:\Omega_n\to \CC$ we first use \cref{l2} to locate a point $z_0\in\CC$, and divide the plane into $6$ components by drawing $3$ lines intersecting in $z_0$ making angles of $\frac{2\pi}{6}$. The way we do this is such that the measure of the inverse image of $g$ of opposing components is equal. We can then construct an automorphism $T$ by just mapping the inverse image of 
   $g$ of each component to the inverse image of its opposing component. For all $\omega\in \Omega$, we then obtain the estimate
   $\angle g(w),z_0,g(T(w))\geq \frac{2\pi}{3}$ for the angle. \cref{l1} will then imply that \eqref{e1} holds true.
	In the actual proof of \cref{theorem-transformation-with-bound} some difficulties arise with the boundaries of the components, and particularly for the case that we are dealing with the measure space $\Omega_n$ with $n$ odd. Because of this reason, it is necessary to consider multiple cases in the proof.\\

The following lemma gives for complex numbers $z_0,z_1,z_2$ a sufficient condition for 
\begin{align}\label{eq:ellipse}
	|z_1-z_2|\geq \frac{\sqrt{3}}{2}(|z_1-z_0| + |z_2-z_0|)
\end{align}
to hold, namely when the angle satisfies $\angle z_1 z_0 z_2\geq \frac{2\pi}{3}$. Equation \eqref{eq:ellipse} can also be described geometrically as saying that the point $z_0$ lies in the ellipse with foci $z_1$ and $z_2$ and eccentricity $\frac{\sqrt{3}}{2}$.	

	\begin{lemma}\label{l1}
		Let $z_0,z_1,z_2\in \CC$ be points in the plane, and consider the triangle $\triangle z_0z_1z_2$. Denote $a = |z_1-z_2|$, $b = |z_1 - z_0|$, $c = |z_2 - z_0|$, and $\alpha = \angle z_1z_0z_2$. If $\alpha\geq\frac{2\pi}{3}$ then $$a\geq\frac{\sqrt{3}}{2}(b+c).$$
	\end{lemma}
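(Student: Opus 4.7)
The plan is to prove this by direct Euclidean geometry using the law of cosines. Writing $a^2 = b^2 + c^2 - 2bc\cos\alpha$, the hypothesis $\alpha \geq \frac{2\pi}{3}$ gives $\cos\alpha \leq -\frac{1}{2}$, hence $-2bc\cos\alpha \geq bc$, and we get the key inequality
\begin{align*}
    a^2 \geq b^2 + c^2 + bc.
\end{align*}

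From here I would rewrite the right-hand side as $(b+c)^2 - bc$ and apply AM--GM in the form $bc \leq \frac{1}{4}(b+c)^2$ to conclude
\begin{align*}
    a^2 \geq (b+c)^2 - \frac{1}{4}(b+c)^2 = \frac{3}{4}(b+c)^2,
\end{align*}
which, after taking square roots (both sides being nonnegative), gives the desired bound $a \geq \frac{\sqrt{3}}{2}(b+c)$.

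There is no real obstacle here: the statement is purely planar, and the only "trick" is recognizing that the worst case of the law of cosines combined with AM--GM produces exactly the constant $\frac{\sqrt{3}}{2}$, with equality when $\alpha = \frac{2\pi}{3}$ and $b=c$ (i.e. an isoceles triangle with apex angle $\frac{2\pi}{3}$). One could alternatively give an ellipse-based proof, noting that the inequality $|z_1-z_2| \geq \frac{\sqrt{3}}{2}(|z_1-z_0|+|z_2-z_0|)$ says that $z_0$ lies in the closed ellipse with foci $z_1,z_2$ and eccentricity $\frac{\sqrt{3}}{2}$, whose boundary subtends the angle $\frac{2\pi}{3}$ at its narrowest co-vertex; but the law-of-cosines computation is shorter and more transparent, so I would present that one.
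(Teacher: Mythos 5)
Your proof is correct and follows essentially the same route as the paper: both start from the law of cosines, use $\cos\alpha\leq-\frac{1}{2}$ to get $a^2\geq b^2+c^2+bc$, and finish with the elementary inequality $b^2+c^2\geq 2bc$ (your $bc\leq\frac{1}{4}(b+c)^2$ is the same fact) to reach $a^2\geq\frac{3}{4}(b+c)^2$. Nothing further is needed.
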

	
\begin{proof}
			According to the cosine theorem we have
			$$a^2=b^2+c^2-2bc\cos\alpha.$$
			Since $\cos\alpha\leq -\frac{1}{2}$ and $b^2+c^2\geq 2bc$ we obtain
			$$4a^2\geq 4(b^2+c^2 + bc) \geq 3b^2 + 3c^2 + 6bc=3(b+c)^2$$
			which shows the result.
\end{proof}

The following lemma is used, for a given function $g\in S(\Omega_n)$, to choose the point $z_0\in \CC$ adequately such that \eqref{e1} holds for some automorphism $T$ that we will later determine. The point $z_0\in \CC$ should be thought of as the center (or rather \textit{a} center) of the image of $g$. In \cref{l2} we have identified $\CC$ with $\RR^2$ and the point $z_0\in \CC$ is represented as a vector $\zz_0\in \RR^2$. This vector $\zz_0$ is chosen together with three affine hyperplanes (i.e. lines) through $\zz_0$ that are represented by unit vectors $\vv_1,\vv_2,\vv_3$ orthogonal to those affine hyperplanes. The unit vectors $\vv_1,\vv_2,\vv_3$ moreover make angles $\angle \vv_i 0\vv_{j}$ for $i\not=j$ of $\frac{2\pi}{3}$ (this means that the affine hyperplances intersect at angles of $\frac{2\pi}{6}$). To each of the affine hyperplanes correspond two closed halfspaces. The lemma tells us that $\zz_0,\vv_1,\vv_2,\vv_3$ can be chosen in such a way that the inverse image of $g$ of each of these closed halfspaces has measure larger or equal to $\frac{1}{2}$. This explains why we think of $\zz_0$ as a center of the image of $g$. Namely, for all three affine hyperplanes it must hold that an equal portion of the domain is mapped to each side (or possibly on the affine hyperplane).
However, we remark that such a `center point' $\zz_0$ with the above properties does not need to be unique.

	\begin{lemma}\label{l2}
		Let $(\Omega,\mu)$ be a probability space and let $g$ be a measurable $\RR^2$-valued function. Then, there exists a point $\zz_0\in \RR^2$, unit vectors $\vv_1,\vv_2,\vv_3\in \RR^2$ with angles $\angle \vv_10\vv_2 = \angle \vv_20\vv_3 = \angle \vv_30\vv_1 = \frac{2\pi}{3}$ so that for $i=1,2,3$, denoting $a_i := \langle \zz_0,\vv_i\rangle$,  we have
		\begin{align*}
			m_i^L:= \mu\bigg(\{\omega\in \Omega:\ \langle g(\omega), \vv_i\rangle \leq a_i\}\bigg)\geq \frac{1}{2}, & &			
			m_i^R:=\mu\bigg(\{\omega\in \Omega:\ \langle g(\omega), \vv_i\rangle \geq a_i\}\bigg)\geq \frac{1}{2}.
		\end{align*}
	For $i=1,2,3$ we point out that $m_i^L + m_i^R=1$ holds if and only if $\mu(\{\omega\in \Omega: \langle g(w),\vv_i\rangle =a_i\})=0$.
    \end{lemma}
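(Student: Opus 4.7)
The plan is to parameterize triples of unit vectors at mutual angles $2\pi/3$ by a single rotation parameter $\theta\in S^1$, writing $\vv_i(\theta)=(\cos(\theta+\tfrac{2\pi i}{3}),\sin(\theta+\tfrac{2\pi i}{3}))$ for $i=1,2,3$, and to locate a good $\theta$ by a semi-continuous intermediate-value argument on $S^1$. For each unit vector $\vv$ the set of medians
$$M(\vv)=\bigl\{t\in\RR:\mu(\langle g,\vv\rangle\leq t)\geq\tfrac12,\ \mu(\langle g,\vv\rangle\geq t)\geq\tfrac12\bigr\}$$
is a non-empty closed interval $[a^-(\vv),a^+(\vv)]$. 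A Fatou-type argument applied to the upper level sets $\{\omega:\langle g(\omega),\vv(\theta)\rangle\geq t\}$ shows that $\theta\mapsto a^+(\vv(\theta))$ is upper semi-continuous and, symmetrically, $\theta\mapsto a^-(\vv(\theta))$ is lower semi-continuous. The identity $M(-\vv)=-M(\vv)$ yields the antipodal relation $a^\pm(-\vv)=-a^\mp(\vv)$.

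Since $\vv_1(\theta)+\vv_2(\theta)+\vv_3(\theta)=0$, the linear map $\zz\mapsto(\langle\zz,\vv_i(\theta)\rangle)_{i=1}^3$ is a bijection from $\RR^2$ onto the hyperplane $\{(a_1,a_2,a_3):a_1+a_2+a_3=0\}$. Hence the problem of finding $\zz_0$ with $\langle\zz_0,\vv_i(\theta)\rangle\in M(\vv_i(\theta))$ for all $i$ reduces to finding a $\theta$ together with $a_i^*\in M(\vv_i(\theta))$ satisfying $a_1^*+a_2^*+a_3^*=0$. Setting $f^\pm(\theta):=\sum_{i=1}^3 a^\pm(\vv_i(\theta))$, the set of attainable sums as the $a_i^*$ range over the median intervals is exactly $[f^-(\theta),f^+(\theta)]$, so it suffices to produce $\theta_*\in S^1$ with $0\in[f^-(\theta_*),f^+(\theta_*)]$.

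Now $f^-$ is lower semi-continuous, $f^+$ is upper semi-continuous, $f^-\leq f^+$, and the antipodal identity gives $f^+(\theta+\pi)=-f^-(\theta)$. The sets $A=\{f^+<0\}$ and $B=\{f^->0\}$ are open in $S^1$ and disjoint, since an overlap would force $f^+<0<f^-\leq f^+$. If $A\cup B=S^1$, connectedness of $S^1$ makes one of them empty; but $A=\emptyset$ implies $f^->0$ everywhere, whence $f^+(\theta+\pi)=-f^-(\theta)<0$ puts $\theta+\pi$ back into $A$, a contradiction, and likewise for $B=\emptyset$. Therefore some $\theta_*\in S^1\setminus(A\cup B)$ exists with $0\in[f^-(\theta_*),f^+(\theta_*)]$; this supplies the desired $a_i^*$, and $\zz_0$ is then recovered as the unique preimage under the bijection above. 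The concluding observation $m_i^L+m_i^R=1\iff\mu(\langle g,\vv_i\rangle=a_i)=0$ is immediate from $m_i^L+m_i^R=1+\mu(\langle g,\vv_i\rangle=a_i)$. The main subtlety is establishing the semi-continuity of $a^\pm$ cleanly when the distribution of $\langle g,\vv(\theta)\rangle$ has atoms at its medians; the remaining step is a clean topological argument on the circle.
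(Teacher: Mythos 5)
Your proposal is correct, and while it shares the paper's skeleton (directional medians, the antipodal symmetry $M(-\vv)=-M(\vv)$, the concurrency condition $a_1+a_2+a_3=0$ coming from $\vv_1+\vv_2+\vv_3=0$, and an intermediate-value-type argument over a rotation parameter), its execution differs from the paper's in ways worth noting. The paper works with the \emph{lower} median $a(t)$ only, proves it is \emph{continuous} under the extra hypothesis that $g$ is bounded (via a Lipschitz-type estimate involving $\|g\|_\infty$), locates $t_0$ with $c(t_0)\le 0\le -c(t_0+\pi)$ by an averaging argument ($\int_{\TT}c\,dt\le 0$) plus the classical IVT, derives the concurrency condition by an explicit trigonometric computation with intersection points of lines, and then needs a separate reduction for unbounded $g$ (truncation to $\Omega_j\uparrow\Omega$, boundedness and subsequence extraction of $\zz_{0,j},\vv_{i,j}$, and reverse Fatou). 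You instead track both endpoints $a^{\pm}$ of the median interval and prove only semicontinuity, which holds for \emph{all} measurable $g$ with no boundedness assumption: since $\langle g(\omega),\vv_n\rangle\to\langle g(\omega),\vv\rangle$ pointwise for every fixed $\omega$, one has $\limsup_n\{\langle g,\vv_n\rangle\ge t\}\subseteq\{\langle g,\vv\rangle\ge t\}$, and reverse Fatou on the probability space gives $\mu(\langle g,\vv\rangle\ge t)\ge\tfrac12$ whenever $\mu(\langle g,\vv_n\rangle\ge t)\ge\tfrac12$ along a subsequence; together with monotonicity of $t\mapsto\mu(\langle g,\vv\rangle\ge t)$ and the fact that $a^{\pm}(\vv)$ are finite medians (as $g$ is a.e.\ finite), this yields upper semicontinuity of $a^+$ and, symmetrically, lower semicontinuity of $a^-$, atoms or no atoms — so the one step you flag as the "main subtlety" does go through exactly as you indicate, and there is no gap. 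You then replace the paper's IVT step by a connectedness argument on $S^1$ with the open sets $\{f^+<0\}$ and $\{f^->0\}$ and the relation $f^+(\theta+\pi)=-f^-(\theta)$, and you obtain the concurrency criterion more cleanly from the linear bijection $\zz\mapsto(\langle\zz,\vv_i\rangle)_{i=1}^3$ onto $\{a_1+a_2+a_3=0\}$. What your route buys is a uniform treatment of bounded and unbounded $g$ (eliminating the paper's entire truncation-and-compactness reduction) at essentially no cost, since semicontinuity is both easier and more general than the continuity the paper establishes; what the paper's route buys is a genuinely continuous median function $a(t)$ in the bounded case, which makes the circle argument a one-line application of the classical intermediate value theorem.
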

	\begin{proof}
	We first prove the result for the case that $g$ is bounded. Denote $\TT=\RR/2\pi\ZZ$ and for $t\in \TT$ set $\vv(t) = (\cos(t),\sin(t))$ and define
    $$\Omega(t,r)=\{\omega\in \Omega: \langle g(\omega), \vv(t)\rangle \leq r\},\ r\in\RR,$$
    $$A(t)=\biggl\{r\in \RR: \frac{1}{2}\leq \mu(\Omega(t,r))\biggr\},$$
	$$a(t) = \inf A(t).$$
    If $r_n\downarrow a(t)$ and $\frac{1}{2}\leq \mu(\Omega(t,r_n))$ then $\Omega(t,r_1)\supset\Omega(t,r_2)\supset\dots$ and $\Omega(t,a(t))=\bigcap_n \Omega(t,r_n)$. Hence,
    \begin{align}\label{eq:12at}
    \frac{1}{2}\leq \mu(\Omega(t,a(t))).
    \end{align}
    If $r_n\uparrow a(t)$ then $\frac{1}{2}\geq \mu(\Omega(t,r_n))$ and $\Omega(t,r_1)\subset\Omega(t,r_2)\subset\dots$ and
    $\{\omega\in \Omega: \langle g(\omega), \vv(t)\rangle <a(t)\}=\bigcup_n \Omega(t,r_n)$. Hence,
	\begin{align}\label{eq:infimum-properties-measure}
    \mu\bigg(\{\omega\in \Omega: \langle g(\omega), \vv(t)\rangle <a(t)\}\bigg)\leq \frac{1}{2}\leq \mu\bigg(\{\omega\in \Omega: \langle g(\omega), \vv(t)\rangle \leq a(t)\}\bigg)
    \end{align}
    and therefore
    \begin{align}\label{eq:-12at}
    \mu\bigg(\{\omega\in \Omega: \langle g(\omega), \vv(t)\rangle\geq a(t)\}\bigg)\geq \frac{1}{2}.
    \end{align}

	We note that it follows from the definition of $a$ that
	\begin{align}\label{eq:pi-shifted-expression-supremum}
	a(t+\pi) = -\sup\biggl\{r\in \RR: \frac{1}{2}\leq \mu(\{\omega\in \Omega: \langle g(\omega), \vv(t)\rangle \geq r\})\biggr\}
	\end{align}
    since
    $$\Omega(t+\pi,r)=\{\omega\in \Omega: \langle g(\omega), \vv(t)\rangle \geq -r\},\ r\in\RR.$$

	Hence, we obtain by \eqref{eq:-12at}, \eqref{eq:pi-shifted-expression-supremum} and by properties of the supremum that $a(t)\leq -a(t+\pi)$ for all $t\in \TT$ since $a(t)\in\biggl\{r\in \RR: \frac{1}{2}\leq \mu(\{\omega\in \Omega: \langle g(\omega), \vv(t)\rangle \geq r\})\biggr\}$.
Moreover, in the second inequality of \eqref{eq:infimum-properties-measure}, replacing $t$ by $t+\pi$  we obtain
\begin{align}\label{eq:pi-shifted-measure-inequality}
	\frac{1}{2}\leq \mu\bigg(\{\omega\in \Omega: \langle g(\omega), \vv(t)\rangle \geq -a(t+\pi)\}\bigg).
\end{align}
Hence, for any $t\in \TT$, and any $b\in [a(t),-a(t+\pi)]$ we obtain using \eqref{eq:infimum-properties-measure} and \eqref{eq:pi-shifted-measure-inequality} that
	\begin{align}\label{eq:lmaineq}
	\frac{1}{2}&\leq \mu\bigg(\{\omega\in \Omega: \langle g(\omega), \vv(t)\rangle \leq b\}\bigg), &			\frac{1}{2} &\leq
	\mu\bigg(\{\omega\in \Omega: \langle g(\omega), \vv(t)\rangle \geq b\}\bigg).
    \end{align}

		We show that the function $a$ is continuous.
	  Indeed, let $\varepsilon>0$, and choose $\delta>0$ such that $\|\vv(t)-\vv(s)\|_2<\varepsilon$ for all $t,s\in \TT$ with $\dist(s,t)<\delta$. Now, fix $t,s\in \TT$ with $\dist(t,s)<\delta$. Then for $\omega\in \Omega$ we have $$|\langle g(\omega),\vv(t)\rangle-\langle g(\omega),\vv(s)\rangle|\leq \|g\|_{\infty}\|\vv(t)-\vv(s)\|_2<\varepsilon\|g\|_{\infty}.$$ But this means for $r\in \RR$ that
	\begin{align*}
	 \{\omega\in \Omega:\ \langle g(\omega),\vv(t)\rangle\leq r \}\subseteq \{\omega\in \Omega:\ \langle g(\omega),\vv(s)\rangle\leq r+\varepsilon\|g\|_\infty \}.
	\end{align*}
	This implies in particular that
	\begin{align*}\frac{1}{2}\leq \mu\bigg(\{\omega:\ \langle g(\omega),\vv(t)\rangle\leq a(t) \}\bigg)\leq\mu\bigg(\{\omega:\ \langle g(\omega),\vv(s)\rangle\leq a(t)+\varepsilon\|g\|_\infty \}\bigg)
	\end{align*} so that $a(s)\leq a(t) + \varepsilon\|g\|_\infty$.  By symmetry of $s$ and $t$ we obtain similarly $a(t)\leq a(s)+\varepsilon\|g\|_{\infty}$, which implies $|a(t)-a(s)|<\varepsilon\|g\|_\infty$ and shows the continuity of $a$.
	
	Now, for $t\in \TT$ and $b\in \RR$ consider the line
	\[L(t,b) = \{\ww\in \RR^2:\ \langle \ww,\vv(t)\rangle = b\} = b\vv(t) + \RR\vv(t+\frac{\pi}{2}).\]
	For $s\not=t \mod \pi$, the lines $L(s)$ and $L(t)$ intersect at a unique point $\ww(L(s,b),L(t,c))$. In particular there is a $r \in \RR$ such that
	\begin{align*}
		\ww(L(s,b),L(t,c)) =  b\vv(s)+r\vv(s+\frac{\pi}{2}).
	\end{align*}
	Therefore
	$c :=\langle 	\ww(L(s,b),L(t,c)),\vv(t)\rangle = b\langle \vv(s),\vv(t)\rangle + r\langle \vv(s+\frac{\pi}{2}),\vv(t)\rangle$ so that
	$r = \frac{c - b\langle \vv(s),\vv(t)\rangle}{\langle \vv(s+\frac{\pi}{2}),\vv(t)\rangle}$ and thus \[\ww(L(s,b),L(t,c))= b\vv(s) + \frac{c -b\langle \vv(s),\vv(t)\rangle}{\langle \vv(s+\frac{\pi}{2}),\vv(t)\rangle}\vv(s+\frac{\pi}{2}).\]
	Let $t\in \TT$. We are interested in finding values $b_1,b_2,b_3\in \RR$ such that the lines
	$L(t-\frac{2\pi}{3},b_1)$, $L(t+\frac{2\pi}{3},b_2)$ and $L(t,b_3)$ intersect at a single point. This  is to say that the intersection point $\ww(L(t-\frac{2\pi}{3},b_1),L(t+\frac{2\pi}{3},b_2))$ must lie on the line $L(t,b_3)$. From this we obtain the expression for $b_3$, namely:
	\begin{align*}
	b_3 &:= \langle \ww(L(t-\frac{2\pi}{3},b_1),L(t+\frac{2\pi}{3},b_2)),\vv(t)\rangle\\
		&=	b_1\langle \vv(t-\frac{2\pi}{3}),\vv(t)\rangle + \frac{b_2 - b_1\langle \vv(t-\frac{2\pi}{3}),\vv(t+\frac{2\pi}{3})\rangle }{\langle \vv(t-\frac{\pi}{6}),\vv(t+\frac{2\pi}{3})\rangle}\langle\vv(t-\frac{\pi}{6}),\vv(t)\rangle\\
	&=	b_1\cos(\frac{2\pi}{3})
	+ \frac{b_2 - b_1\cos(\frac{4\pi}{3}) }{\cos(\frac{5\pi}{6})}\cos(\frac{\pi}{6})\\
	&=	b_1\cos(\frac{2\pi}{3})
	- \left(b_2 - b_1\cos(\frac{4\pi}{3}) \right)\\
	&= - b_1- b_2.
	\end{align*}
	This shows that the lines $L(t-\frac{2\pi}{3}, b_1), L(t+\frac{2\pi}{3},b_2)$ and $L(t, b_3)$ intersect precisely when $b_1+b_2+b_3 =0$.
	
	Define $c:\TT\to \RR$ as $c(t) = a(t-\frac{2\pi}{3})+a(t)+a(t+\frac{2\pi}{3})$, which is a continuous function. Now, we note that, similar to $a$, we have $c(t)\leq -c(t+\pi)$ for all $t$, so that
	$\int_{\TT}c(t)dt\leq -\int_{\TT} c(t+\pi)dt = -\int_{\TT}c(t)dt$, and hence that $\int_{\TT}c(t)dt\leq 0$. We can thus find a $t_1$ such that $c(t_1)\leq 0$. If also $0\leq -c(t_1+\pi)$ then we set $t_0:=t_1$. If instead $-c(t_1+\pi)<0$, we set $t_2 := t_1+\pi$ and obtain $-c(t_2+\pi)=-c(t_1)\geq c(t_1+\pi)> 0$.  By the intermediate value theorem, we then find a $t_0\in \TT$ such that $-c(t_0+\pi) = 0$. Then $c(t_0)\leq -c(t_0+\pi) = 0$.
	
	In both cases, we found $t_0\in \TT$ with $c(t_0)\leq 0 \leq -c(t_0+\pi)$. Now, as moreover $a(t)\leq -a(t+\pi)$ for all $t\in \TT$, we can determine
	\begin{align*}
		b_1 &\in [a(t_0-\frac{2\pi}{3}), - a(t_0+\frac{\pi}{3})],\\
		b_2 &\in [a(t_0+\frac{2\pi}{3}), - a(t_0-\frac{\pi}{3})],\\
		b_3 &\in [a(t_0), - a(t_0+\pi)]
	\end{align*} such that $b_1+b_2+b_3 = 0$.  Indeed, this is possible as the sum of the left-endpoints of the intervals equals $c(t_0)$, whereas the sum of the right-endpoints of the intervals equals $-c(t_0+\pi)$. We now set $\vv_1 := \vv(t_0-\frac{2\pi}{3})$, $\vv_2 := \vv(t_0+\frac{2\pi}{3})$ and $\vv_3 := \vv(t_0)$ and let $\zz_0$ be the unique intersection point of the lines $L(t_0-\frac{2\pi}{3},b_1)$, $L(t_0+\frac{2\pi}{3},b_2)$ and $L(t_0,b_3)$ . As $\zz_0$ lies on each of the three lines, we obtain that $a_i :=\langle \zz_0,\vv_i\rangle = b_i$ for $i=1,2,3$. By the choice of the $b_i$'s in the intervals, it (see \eqref{eq:lmaineq}) now follows that the properties of the lemma are fulfilled. The last line of the lemma follows from the fact that $m_i^L+ m_i^R =\mu(\Omega) + \mu(\{\omega\in \Omega: \langle g(\omega),\vv_i\rangle = a_i\})$.

The result for unbounded $g$ follows by the following reduction to the case of bounded functions.
For $j\in\NN$ let $\Omega_j\subseteq \Omega$ 
be a measurable subset for which $g\chi_{\Omega_j}$ is bounded and with $\Omega_j\uparrow \Omega$. Denote  $\mu_j := \frac{1}{\mu(\Omega_j)}\mu$ and $g_j:=g|_{\Omega_j}\in L_\infty(\Omega_j,\mu_j)$. Applying the result of the lemma to $g_j$, we find $\zz_{0,j}$ and $\vv_{i,j}$ and $a_{i,j}=\langle \zz_{0,j},\vv_{i,j}\rangle$ with the stated properties. The sequence $\zz_{0,j}$ must be bounded. Indeed, otherwise there is an $i\in\{1,2,3\}$ such that for a subsequence of $(a_{i,j})_{j\geq 1}$ we have $a_{i,j}\to+\infty$. However, this would contradict $\frac{1}{2}\leq \mu_j\bigg(\{\omega\in \Omega_j:\ \langle g_j(\omega), \vv_{i,j}\rangle \geq a_{i,j}\}\bigg)$. Thus, by boundedness of the sequences $(\zz_{0,j})_{j\geq 1}$ and $(\vv_{i,j})_{j\geq 1}$, we have for some strictly increasing sequence $(j_k)_{k\geq 1}$ in $\NN$, that the limits $\zz_0:=\lim\limits_{k\to\infty}\zz_{0,j_k}$ and  $\vv_{i}:=\lim\limits_{k\to\infty}\vv_{i,j_k}$ exist. Setting $a_i := \langle \zz_0,\vv_i\rangle$ we also have $a_{i}=\lim\limits_{k\to\infty}a_{i,j_k}$. Using (reversed) Fatou's lemma, we now obtain for $i=1,2,3$ that
	\begin{align*}
		\mu\bigg(\{\omega\in \Omega:\ \langle g(\omega), \vv_i\rangle \leq a_i\}\bigg)
		&\geq\mu\bigg(\bigcap_{K=1}^\infty\bigcup_{k\geq K}\{\omega\in \Omega_{j_k}:\ \langle g(\omega), \vv_{i,j_k}\rangle \leq a_{i,j_k}\}\bigg)\\
		&\geq \limsup\limits_{k\to\infty}\mu\bigg(\{\omega\in \Omega_{j_k}:\ \langle g(\omega), \vv_{i,j_k}\rangle \leq a_{i,j_k}\}\bigg)\\
		&\geq \limsup\limits_{k\to\infty}\mu_{j_k}\bigg(\{\omega\in \Omega_{j_k}:\ \langle g_{j_k}(\omega), \vv_{i,j_k}\rangle \leq a_{i,j_k}\}\bigg)\\
		&\geq \frac{1}{2}.
	\end{align*}
	In the same way $\mu\bigg(\{\omega\in \Omega| \langle g(\omega), \vv_i\rangle \geq a_i\}\bigg)\geq \frac{1}{2}$ can be shown. The last line of the lemma follows as before. This proves the lemma.

\end{proof}

We are now fully equipped to prove \cref{theorem-transformation-with-bound}.
	\begin{proof}[Proof of \cref{theorem-transformation-with-bound}]
	By identifying $\CC$ with $\RR^2$, we can apply Lemma \ref{l2}, to obtain $\zz_0$ and $\vv_1, \vv_2, \vv_3$ and  $a_1, a_2, a_3$ which we will use to prove the result. Without loss of generality we can moreover assume that $\vv_1$,$\vv_2$ and $\vv_3$ are orientated counter-clockwise. In the proof, we distinguish  cases, depending on $n$.
	We prove the result separately for the cases: (1) for $n$ even, or $n=\infty$ and (2) for $n$ odd,
	
	(1). \textit{$n$ is even, or $n=\infty$.}
	
	First, suppose that $n\in\NN$ is even. Then, by the choice of the point $\zz_0$ and of $\vv_1,\vv_2,\vv_3$ (see \cref{l2}) and the fact that $n$ is even, we can for $j=1,2,3$ create partitions $\{I_j^+,I_j^-\}$ of $\Omega_n$ such that $\mu_n(I_j^+) = \frac{\mu_n(\Omega_n)}{2} = \mu_n(I_j^-)$ and such that $\langle g(\omega), \vv_{j}\rangle \leq a_{j}$ whenever $\omega\in I^-_{j}$ and $\langle g(\omega),\vv_{j}\rangle \geq a_{j}$ whenever $\omega\in I_{j}^+$. If instead $n=\infty$ then the same is true, because of the fact that $\mu_n$ is atomless in that case. We can now define the sets
	\begin{align*}
		P_1^+ =   I_1^+ \cap I_2^- \cap I_3^-,&\quad 	P_1^- = I_1^-\cap I_2^+\cap I_3^+,\\
		P_2^+ = I_1^-\cap I_2^+\cap I_3^-, & \quad	P_2^- = I_1^+\cap I_2^- \cap I_3^+,  \\
		P_3^+ = I_1^-\cap I_2^-\cap I_3^+,& \quad  P_3^+ = I_1^+\cap I_2^+ \cap I_3^-, \\
		P_4^+ = I_1^+\cap I_2^+\cap I_3^+, & \quad P_4^- = I_1^- \cap I_2^- \cap I_3^-
	\end{align*}
	that partition $\Omega_n$.
	
	We show that $g(P_4^+\cup P_4^-) \subseteq \{\zz_0\}$. We have that $\vv_1+\vv_2+\vv_3=0$ and therefore $a_1+a_2+a_3=0$.
	For $\omega\in I_1^+\cap I_2^+\cap I_3^+$ we have $\langle g(\omega),\vv_i\rangle\geq a_i, i=1,2,3,$ and $\sum_{i=1}^3\langle g(\omega),\vv_i\rangle =0$. Hence, $\langle g(\omega),\vv_i\rangle=a_i, i=1,2,3$.
	But this means precisely that $g(\omega) = \zz_0$. Similarly $g(P_4^-) \subseteq \{\zz_0\}$. For benefit of the reader, we have visualized the partition sets in  \cref{figure:partition-sets-in-plane}.
	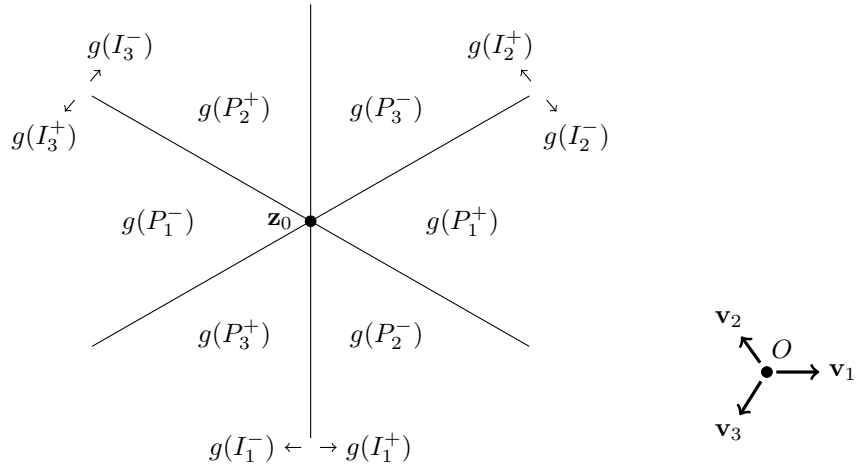
\begin{figure}[h!]
		
		\begin{tikzpicture}[baseline]
			% place nodes
			\node at (-0.4,0) 	  	(x0) {$\zz_0$};
			\node at (0,3)   		(L1A) {};
			\node at (0,-3)   	(L1B) {};
			\node at (-3,-2*0.866)   		(L2A) {};
			\node at (3,2*0.866)  	 	(L2B) {};
			\node at (3,-2*0.866)  	 	(L3A) {};
			\node at (-3,2*0.866)   		(L3B) {};
			
			\node at (6,-2)   		(origin) {};
			\node at (6.2,-1.7)   		(O) {$O$};
			\node at (7,-2)   			(v1) {$\vv_1$};
			\node at (5.5,-1.3)   		(v2) {$\vv_2$};
			\node at (5.5,-2.8)   		(v3) {$\vv_3$};
			
			\node at (0.9,-3)   				(I1A) {$g(I_1^+)$};
			\node at (-0.9,-3)   				(I1B) {$g(I_1^-)$};
			\node at (3-0.5,2*0.866+0.6)   		(I2A) {$g(I_2^+)$};
			\node at (3+0.5,2*0.866-0.6)					(I2B) {$g(I_2^-)$};
			\node at (-3 - 0.5 ,2*0.866 - 0.6)				(I3A) {$g(I_3^+)$};
			\node at (-3 + 0.5 ,2*0.866 + 0.6)   		(I3B) {$g(I_3^-)$};
			
			\node at (-2,0)  	 				(P3A) {$g(P_1^-)$};
			\node at (2,0)   					(P3B) {$g(P_1^+)$};
			\node at (1,-1.5)   				(P1A) {$g(P_2^-)$};
			\node at (-1,1.5)   				(P1B) {$g(P_2^+)$};
			\node at (1,1.5)   					(P2A) {$g(P_3^-)$};
			\node at (-1,-1.5)  	 			(P2B) {$g(P_3^+)$};

			% draw edges
			\draw[-] (L1A) -- node[xshift =-0.2cm, yshift=-0.2cm] {} (L1B);
			\draw[-] (L2A) -- node[xshift =0.2cm, yshift=-0.2cm] {} (L2B);
			\draw[-] (L3A) -- node[right] {} (L3B);
			
			\draw[->] (L1B) -- node[right] {} (I1A);
			\draw[->] (L1B) -- node[right] {} (I1B);
			
			\draw[->] (L2B) -- node[right] {} (I2A);
			\draw[->] (L2B) -- node[right] {} (I2B);
			
			\draw[->] (L3B) -- node[right] {} (I3A);
			\draw[->] (L3B) -- node[right] {} (I3B);
			
			\draw[->,very thick] (origin) -- node[right] {} (v1);
			\draw[->,very thick] (origin) -- node[right] {} (v2);
			\draw[->,very thick] (origin) -- node[right] {} (v3);
			\filldraw (0,0) circle (2pt);
			\filldraw (6,-2) circle (2pt);
		\end{tikzpicture}
		\caption{The partition sets are visualized for a universal example (any example is like this, except for shifting $\zz_0$ and rotating the lines). The $3$ lines intersect in a single point $\zz_0$. For every line, the set $\Omega_n$ is partitioned in two sets $I_i^{+}$ and $I_i^-$, so that $g(I_i^+)$ and $g(I_i^-)$ lie only on one side of this line. The partition sets $P_j^\pm$ are then such that $g(P_j^\pm)$ lies in one connected component (or its boundary). The sets $g(P_4^+)$ and $g(P_4^-)$ are not visualized. For these we must have $g(P_4^+ \cup P_4^-) \subseteq \{\zz_0\}$.
		}\label{figure:partition-sets-in-plane}
	\end{figure}
	
	We have
	\begin{align}\label{s1}
		\mu_n(P_1^+ \cup P_2^- \cup P_3^- \cup P_4^+)=\mu_n(I_1^+)=\mu_n(I_1^-)=\mu_n(P_1^-\cup P_2^+\cup P_3^+\cup P_4^-),
	\end{align}
	\begin{align}\label{s2}
		\mu_n(P_1^-\cup P_2^+\cup P_3^-\cup P_4^+)=\mu_n(I_2^+)=\mu_n(I_2^-)=\mu_n(P_1^+\cup P_2^-\cup P_3^+\cup P_4^-),
	\end{align}
	\begin{align}\label{s3}
		\mu_n(P_1^-\cup P_2^-\cup P_3^+\cup P_4^+)=\mu_n(I_3^+)=\mu_n(I_3^-)=\mu_n(P_1^+\cup P_2^+\cup P_3^-\cup P_4^-),
	\end{align}
	(\ref{s1})$+$(\ref{s2}):
	$$\mu_n(P_3^-)+\mu_n(P_4^+)=\mu_n(P_3^+)+\mu_n(P_4^-),$$
	(\ref{s1})$+$(\ref{s3}):
	$$\mu_n(P_2^-)+\mu_n(P_4^+)=\mu_n(P_2^+)+\mu_n(P_4^-),$$
	(\ref{s2})$+$(\ref{s3}):
	$$\mu_n(P_1^-)+\mu_n(P_4^+)=\mu_n(P_1^+)+\mu_n(P_4^-).$$
	We thus obtain that $t:=\mu_n(P_j^+) - \mu_n(P_j^-)$ is independent of $j=1,2,3,4$.

	Let us assume that $t\geq 0$ so that $\mu_n(P_j^+)\geq t$. Choose $A_j\subseteq P_{j}^+$ with $\mu_n(A_j) = t$.
	We denote $X_1=(P_4^+\cup P_4^-)\setminus A_4$ and $$X_2^{1,1}=P_1^+\setminus A_1,\quad X_2^{1,2}=P_1^-,\ X_2^{2,1}=P_2^+\setminus A_2,\ X_2^{2,2}=P_2^-,\ X_2^{3,1}=P_3^+\setminus A_3,\ X_2^{3,2}=P_3^-.$$
	
	First, suppose that $n\in \NN$ is even. Then $A_j=\{a_{j,1},\dots,a_{j,l}\},\ j=1,2,3,4,\ l=tn$. Fix $k=1,\dots,l$.
	In each triple $(a_{1,k},a_{2,k},a_{3,k})$ there will be such $i,j\in\{1,2,3\}$ that $\frac{2\pi}{3}\leq\angle g(a_{i,k}),\zz_0, g(a_{j,k})\leq \pi$ (see \cref{figure:partition-sets-in-plane}). Let $\{q\}=\{1,2,3\}\setminus\{i,j\}$.
	Then $\{g(a_{i,k})\}$ and $\{g(a_{j,k})\}$, and also $\{g(a_{q,k})\}$ and $\{g(a_{4,k})\}$ form pairs of $(\zz_0,\frac{\pi}{3})$-conjugate sets.
	We put $X_2^{2k+2,1}=\{a_{i,k}\},\ X_2^{2k+2,2}=\{a_{j,k}\},\ X_2^{2k+3,1}=\{a_{q,k}\},\ X_2^{2k+3,2}=\{a_{4,k}\}$ and $X_2^{m,1}=X_2^{m,2}=\emptyset$ for $m\geq 2l+4$.
	
	We assume now that $n=\infty$. Let $\Sigma_j=\{Y_j^1,Y_j^2,\dots\}$ be a maximal system of pairwise disjoint measurable subsets of $A_j,\ j=1,2,3,4$, such that $\mu_\infty(Y_1^k)=\mu_\infty(Y_2^k)=\mu_\infty(Y_3^k)=\mu_\infty(Y_4^k)>0$ and the four $(g(Y_1^k),g(Y_2^k),g(Y_3^k),g(Y_4^k))$ is divided into two pairs of $(\zz_0,\frac{\pi}{3})$-conjugate sets for $k=1,2,\dots$.
	
	Put $B_j=A_j\setminus\bigcup_kY_j^k$. Then $\mu_\infty(B_1)=\mu_\infty(B_2)=\mu_\infty(B_3)=\mu_\infty(B_4)=t_0$. Suppose that $t_0>0$. If the sets $g(B_1),g(B_2),g(B_3)$ are located on three rays emanating from $\zz_0$ and forming angles $\frac{2\pi}{3}$ then $g(B_1),g(B_2)$ are $(\zz_0,\frac{\pi}{3})$-conjugate sets and the same for $g(B_3),g(B_4)$. This contradicts the maximality of the above set systems $\Sigma_j$.
	
	Otherwise, there will be such $b_1\in B_i,\ b_2\in B_j,\ i,j\in\{1,2,3\},\ i\neq j,$ that $\angle g(b_1),\zz_0, g(b_2)>\frac{2\pi}{3}$ and $g(b_1),g(b_2)$ are essential values of $g|_{B_i\cup B_j}$.
	Then there will be such neighborhoods $V_1$ and $V_2$ of the points $g(b_1)$ and $g(b_2)$, respectively, that $V_1,V_2$ are $(\zz_0,\frac{\pi}{3})$-conjugate sets. Therefore there exist sets $Y_1\subset B_i,\ Y_2\subset B_j$ so that $\mu_\infty(Y_1)=\mu_\infty(Y_2)>0$ and $g(Y_k)\subset V_k,\ k=1,2$. Hence, $g(Y_1),g(Y_2)$ are $(\zz_0,\frac{\pi}{3})$-conjugate sets. Let $\{q\}=\{1,2,3\}\setminus\{i,j\}$.
	There exists $Y_3\subset B_q,\ Y_4\subset B_4,\ \mu_\infty(Y_3)=\mu_\infty(Y_4)=\mu_\infty(Y_1)$. It is clear that $g(Y_3),g(Y_4)$ are $(\zz_0,\frac{\pi}{3})$-conjugate sets.
	The presence of sets $Y_1,Y_2,Y_3,Y_4$ contradicts the maximality of the above systems $\Sigma_j$.
	
	The contradiction obtained in both cases shows $t_0=0$. Therefore the system\\ $\{X_1\}\cup \{X_2^{m,i}: 1\leq m\leq 3, 1\leq i\leq 2\}$ can be completed using $\Sigma_j,\ j=1,2,3,4$.
	
	It remains to define $T$ so that $T_{X_{1}}=Id_{X_{1}},\ T(X_2^{m,1})=X_2^{m,2},\ T(X_2^{m,2})=X_2^{m,1}$ for  $m=1,2,\dots$ and such that $T^2=\Id_{\Omega_n}$. Then the inequality (\ref{e1}) follows from the Lemma \ref{l1}.
	
	The case that $t\leq 0$ is similar, by changing the roles of $P_j^+$ and $P_j^-$.

	(2). \textit{$n$ is odd.}
	
	We can for $i=1,2,3$ instead build partitions $\{I_i^+, \{\omega_i\}, I_j^-\}$ of $\Omega_n$ with $\mu_n(I_i^+) = \mu_n(I_i^-)$ and such that  $\langle g(\omega), \vv_i\rangle \leq a_i$ whenever $\omega\in I_i^-$ and $\langle g(\omega),\vv_i\rangle \geq a_i$ whenever $\omega\in I^+$ and $\langle g(\omega_i),\vv_i\rangle = a_i$. Indeed, such $\omega_i$ exist because $|\{\omega\in \Omega_n:\ \langle g(\omega), \vv_i\rangle \leq a_i\}|,|\{\omega\in \Omega_n:\ \langle g(\omega), \vv_i\rangle \geq a_i\}|\geq \frac{n+1}{2}$ and therefore $\{\omega\in \Omega_n:\ \langle g(\omega), \vv_i\rangle \leq a_i\}\cap \{\omega\in \Omega_n:\ \langle g(\omega), \vv_i\rangle \geq a_i\}\neq \emptyset$. Denote $Y_0 = \{\omega_1,\omega_2,\omega_3\}$.
	
	Now, suppose that $\zz_0\in g(\Omega_n)$. Then we could have chosen $\omega_1 = \omega_2=\omega_3$ all equal and such that $g(\omega_i) = \zz_0$. Then $|Y_0| = 1$ and the sets $\{I_i^+,I_i^-\}$ are all partitions of $\Omega_n\setminus Y_0$ similar to (1), and we can build the corresponding automorphism $T$ of $\Omega_n\setminus Y_0$. This completes the proof for that case by setting $T(\omega_{1}) = \omega_{1}$.
	
	We can thus assume that $\zz_0\not\in g(\Omega_n)$ so that in particular $g(\omega_i)\not= g(\omega_j)$ for $i\not=j$ and $|Y_0| = 3$. Now suppose first that $\zz_0\in \Conv(g(Y_0))$. For all $i\in \{1,2,3\}$ we then have that $Y_0\cap I_i^+$ and $Y_0\cap I_i^-$ both consist of $1$ element.
	Hence, $\mu_n(I_i^+\setminus Y_0) = \mu_n(I_i^-\setminus Y_0)$ and the partitions $\{I_i^+\setminus Y_0 ,I_i^-\setminus Y_0\}$ of $\Omega_n\setminus Y_0$ satisfy the same properties as (1). We thus obtain a measure preserving automorphism $T$ of $\Omega_n\setminus Y_0$ with the same properties. Now we can set $T(\omega_1)=\omega_2$, $T(\omega_2) = \omega_3$ and $T(\omega_3)=\omega_1$, so that $\angle g(\omega_i),\zz_0,g(T(\omega_i)) = \frac{2\pi}{3}$. This finishes the proof by \cref{l1}
	
	Now suppose that $\zz_0\not\in \Conv(g(Y_0))$.
	Then it can be seen geometrically (for intuition see \cref{figure:five-cycle}), that there is a unique choice of (distinct) indices $i_1,i_2,i_3\in \{1,2,3\}$ such that \begin{align}\label{eq:unique-choice-of-indices}
		\{\omega_{i_1}\} = Y_0\cap I_{i_2}^- &\quad  \{\omega_{i_3}\} = Y_0\cap I_{i_2}^+.
	\end{align} Now, suppose that $\omega_{i_1}\not\in I_{i_3}^+$. Then as $\omega_{i_1}\not=\omega_{i_3}$ we get $\omega_{i_1}\in I_{i_3}^-$. But then as $\omega_{i_1}\in \{\omega_{i_1}\}\cap I_{i_2}^-\cap I_{i_3}^-$ we would get $g(\omega_{i_1}) = \zz_0$ by the same argument as why $g(P_4^+\cup P_4^-)\subseteq \{\zz_0\}$ in (1). However, $\zz_0\not\in g(\Omega_n)$ by our assumption so this cannot be the case. We conclude that we must have $\omega_{i_1}\in I_{i_3}^+$. By a same argument we find that we must have $\omega_{i_3}\in I_{i_1}^-$ (Indeed, otherwise $\omega_{i_3}\in I_{i_1}^+$ so that $\omega_{i_3}\in I_{i_1}^+ \cap I_{i_2}^+\cap \{\omega_{i_3}\}$, which would imply $g(\omega_{i_3})=\zz_0$, which gives a contradiction). Furthermore, we claim that $\omega_{i_2}\in I_{i_3}^+$. Indeed, if $\omega_{i_2}\in I_{i_3}^-$ then we could rearrange the indexes as $i_1' = i_2$, $i_2' = i_3$ and $i_3' = i_1$, so that we get
	$\{w_{i_1'}\}  =\{w_{i_2}\} = Y_0\cap I_{i_3}^- = Y_0\cap I_{i_2'}^-$
	and 	$\{w_{i_3'}\}  =\{w_{i_1}\} = Y_0\cap I_{i_3}^+ = Y_0\cap I_{i_2'}^+$.
	This contradicts the uniqueness of the choice $i_1,i_2,i_3$ satisfying \eqref{eq:unique-choice-of-indices}. We conclude that indeed $\omega_{i_2}\in I_{i_3}^+$.
	By the same argument we find $\omega_{i_2}\in I_{i_1}^-$ (Indeed, if $\omega_{i_2}\in I_{i_1}^+$ we could take the rearrangement $i_1' = i_3$, $i_2' = i_1$ and $i_3' = i_2$ to obtain $\{w_{i_1'}\}  =\{w_{i_3}\} = Y_0\cap I_{i_1}^- = Y_0\cap I_{i_2'}^-$ and $\{w_{i_3'}\}  =\{w_{i_2}\} = Y_0\cap I_{i_1}^+ = Y_0\cap I_{i_2'}^+$, which contradicts the uniqueness).  For clarity we summarize the results:
	\begin{align*}
		\{\omega_{i_1}\} = Y_0\cap I_{i_2}^- &\quad \{\omega_{i_3}\} = Y_0\cap I_{i_2}^+,\\
		\{\omega_{i_2},\omega_{i_3}\} = Y_0 \cap I_{i_1}^-&\quad  \{\omega_{i_1},\omega_{i_2}\} = Y_0\cap I_{i_3}^+.
	\end{align*}
	We now obtain
	\begin{align}
		\label{eq: sum-i1} \mu_n(I_{i_1}^+\cap I_{i_2}^-)  + \mu_n(I_{i_1}^+\cap I_{i_2}^+) &= \mu_n(I_{i_1}^+\setminus \{\omega_{i_2}\})= \mu_n(I_{i_1}^+),\\
		\label{eq: sum-i2a}\mu_n(I_{i_1}^+\cap I_{i_2}^-)  +\mu_n(I_{i_1}^-\cap I_{i_2}^-) &= \mu_n(I_{i_2}^-\setminus \{\omega_{i_1}\}) = \mu_n(I_{i_2}^-)-\frac{1}{n},\\
		\label{eq: sum-i3}\mu_n(I_{i_2}^+\cap I_{i_3}^-)  +\mu_n(I_{i_2}^-\cap I_{i_3}^-) &= \mu_n(I_{i_3}^-\setminus \{\omega_{i_2}\}) = \mu_n(I_3^{-}),\\
		\label{eq: sum-i2b}\mu_n(I_{i_2}^+\cap I_{i_3}^-)  +\mu_n(I_{i_2}^+\cap I_{i_3}^+) &= \mu_n(I_{i_2}^+\setminus \{\omega_{i_3}\}) = \mu_n(I_{i_2}^+)-\frac{1}{n}.
	\end{align}
	Hence, by \eqref{eq: sum-i1} + \eqref{eq: sum-i2a} we obtain $\mu_n(I_{i_1}^+\cap I_{i_2}^+) = \frac{1}{n} + \mu_n(I_{i_1}^-\cap I_{i_2}^-)$
	and by summing up \eqref{eq: sum-i3}  with \eqref{eq: sum-i2b} we obtain $\mu_n(I_{i_2}^-\cap I_{i_3}^-) = \frac{1}{n}+ \mu_n(I_{i_3}^+\cap I_{i_2}^+)$. We conclude the existences of  $\omega_4 \in I_{i_1}^+\cap I_{i_2}^+$ and $\omega_5 \in I_{i_2}^-\cap I_{i_3}^-$. Now, for the sets $P_4^+ := I_{i_1}^+\cap I_{i_2}^+\cap I_{i_3}^+$ and $P_4^- := I_{i_1}^-\cap I_{i_2}^-\cap I_{i_3}^-$ we have  that $g(P_4^+\cup P_4^-) \subseteq \{\zz_0\}$ (same as in (1)), and hence $P_4^+\cup P_4^-=\emptyset$ as $\zz_0\not\in g(\Omega_n)$ by assumption. This means that $\omega_4\not\in I_{i_3}^+$ and $\omega_5\not\in I_{i_1}^-$. Also, as $\omega_{i_3}\in I_{i_1}^-$ and $\omega_{i_1}\in I_{i_3}^+$ we get that $\omega_4 \not=\omega_{i_3}$ and $\omega_5\not= \omega_{i_1}$. As $\{I_i^+,\{\omega_i\},I_i^-\}$ are partitions of $\Omega_n$, we conclude that $\omega_4\in I_{i_1}^+\cap I_{i_2}^+\cap I_{i_3}^-$ and $\omega_5\in I_{i_1}^+\cap I_{i_2}^-\cap I_{i_3}^-$  Denote $Y_1 = \{\omega_1,\omega_2,\omega_3,\omega_4,\omega_5\}$, so that by the above we have $|Y_1| = 5$ and moreover:
	\begin{align*}
		Y_1 \cap I_{i_1}^- &= \{\omega_{i_2},\omega_{i_3}\}, &\quad Y_1 \cap I_{i_2}^- &= \{\omega_{i_1},\omega_5\},  &\quad  Y_1 \cap I_{i_3}^- &=  \{\omega_{4},\omega_{5}\},\\
		Y_1 \cap I_{i_1}^+ &= \{\omega_4,\omega_5\}, &\quad  Y_1 \cap I_{i_2}^+ &= \{\omega_{i_3},\omega_4\}, &\quad Y_1\cap I_{i_3}^+ &=  \{\omega_{i_1},\omega_{i_2}\}.
	\end{align*}
	
	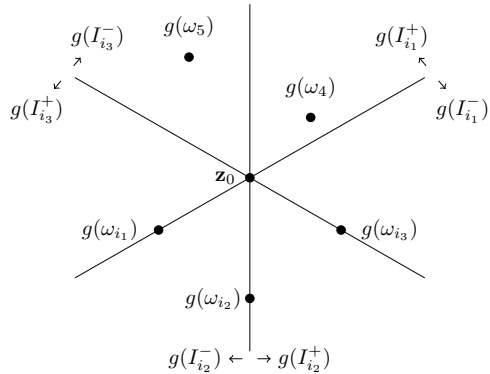
\begin{figure}[h!]
		\scalebox{0.8}{
			\begin{tikzpicture}
				% place nodes
				\node at (-0.4,0) 	  	(z0) {$\zz_0$};
				\node at (0,3)   		(L1A) {};
				\node at (0,-3)   	(L1B) {};
				\node at (-3,-2*0.866)   		(L2A) {};
				\node at (3,2*0.866)  	 	(L2B) {};
				\node at (3,-2*0.866)  	 	(L3A) {};
				\node at (-3,2*0.866)   		(L3B) {};
				
				\node at (-2.3,-1*0.866)   		(w-i1) {$g(\omega_{i_1})$};
				\node at (-0.6,-2)   	(w-i2) {$g(\omega_{i_2})$};
				\node at (2.3,-1*0.866)   		(w-i1) {$g(\omega_{i_3})$};
				\node at (1,1.5)   		(w-i1) {$g(\omega_{4})$};
				\node at (-1,2.5)   		(w-i1) {$g(\omega_{5})$};
				
				\node at (0.9,-3)   				(I1A) {$g(I_{i_2}^+)$};
				\node at (-0.9,-3)   				(I1B) {$g(I_{i_2}^-)$};
				\node at (3-0.5,2*0.866+0.6)   		(I2A) {$g(I_{i_1}^+)$};
				\node at (3+0.5,2*0.866-0.6)					(I2B) {$g(I_{i_1}^-)$};
				\node at (-3 - 0.5 ,2*0.866 - 0.6)				(I3A) {$g(I_{i_3}^+)$};
				\node at (-3 + 0.5 ,2*0.866 + 0.6)   		(I3B) {$g(I_{i_3}^-)$};

				% draw edges
				\draw[-] (L1A) -- node[xshift =-0.2cm, yshift=-0.2cm] {} (L1B);
				\draw[-] (L2A) -- node[xshift =0.2cm, yshift=-0.2cm] {} (L2B);
				\draw[-] (L3A) -- node[right] {} (L3B);
				
				\draw[->] (L1B) -- node[right] {} (I1A);
				\draw[->] (L1B) -- node[right] {} (I1B);
				
				\draw[->] (L2B) -- node[right] {} (I2A);
				\draw[->] (L2B) -- node[right] {} (I2B);
				
				\draw[->] (L3B) -- node[right] {} (I3A);
				\draw[->] (L3B) -- node[right] {} (I3B);
				\filldraw (0,0) circle (2pt);
				\filldraw (0,-2) circle (2pt);
				\filldraw (-1.5,-0.866) circle (2pt);
				\filldraw (1.5,-0.866) circle (2pt);
				\filldraw (-1,2) circle (2pt);
				\filldraw (1,1) circle (2pt);
		\end{tikzpicture}}
		\caption{The $5$ points are depicted for an example.}
		\label{figure:five-cycle}
	\end{figure}
	Now, as all these sets have size $2$, we must have that $$\mu_n(I_i^+\setminus Y_1) =\mu_n(I_i^+)-\frac{2}{n} = \mu_n(I_i^-)-\frac{2}{n} = \mu_n(I_i^-\setminus Y_1).$$ This means that the partitions $\{I_i^+\setminus Y_1,I_i^-\setminus Y_1\}$ of $\Omega_n\setminus Y_1$ satisfy the same properties as in (1). We can therefore find a transformation $T$ of $\Omega_n\setminus Y_1$ with the same properties.  We can now define $T$ on $Y_1$ by setting  $T(\omega_{i_1}) = \omega_4$, $T(\omega_4) = \omega_{i_2}$, $T(\omega_{i_2}) = \omega_5$, $T(\omega_5) = \omega_{i_3}$ and $T(\omega_{i_3}) = \omega_{i_1}$. Then $\angle g(\omega_i),\zz_0,g(T(\omega_i))\geq \frac{2\pi}{3}$ for all $i$.
	Appealing to \cref{l1} this impies $|g(w)-g(T(w))| \leq \frac{\sqrt{3}}{2}(|g(w)-\zz_0| + |g(T(w)-\zz_0|)$ for all $\omega\in \Omega$, which shows that \eqref{e1} holds true.
	The inequality \eqref{e2} follows from it.
Furthermore,  in each of the considered cases it clear how to split $\Omega_n$ into the parts $X_1,X_2,X_3,X_5$ (note that by construction  we have $T^k(\omega)=\omega$ for some $k\in \{1,2,3,5\}$ for $\omega\in \Omega$). We prove the final statement.

Let $n<\infty$ and let $(T_m)$, $(z_m)$ be sequences such that $0<\Lambda(g,T_m,z_m)\uparrow \Lambda(g)$. Then
$$\Lambda(g,T_m,z_m)^{-1}|g(\omega)-g(T_m(\omega))|\geq|g(\omega)-z_m|+|g(T_m(\omega))-z_m|\geq|g(\omega)-z_m|$$ for any $\omega\in\Omega_n$.
Let $\omega_0\in\Omega_n$. Then
$$|g(\omega_0)-z_m|\leq \Lambda(g,T_m,z_m)^{-1}\Diam(g(\Omega_n))\leq \Lambda(g,T_1,z_1)^{-1}\Diam(g(\Omega_n)).$$
Since $card(Aut_n)=n!<\infty$ and since $\{z\in\CC:\ |g(\omega_0)-z|\leq \Lambda(g,T_1,z_1)^{-1}\Diam(g(\Omega_n))\}$ is compact, there exists a sequence $(m_k)$ and a $z_0\in\CC$ such that 
$$z_0=\lim_k z_{m_k},\ T_0:=T_{m_1}=T_{m_2}=\dots=T_{m_k}=\dots\ .$$
Then $\Lambda(g,T_0,z_0)=\Lambda(g)$.
\end{proof}

\section{Commutator estimates for normal operators in finite factors}\label{section:estimates-in-finite-factors}
The main result of this section, \cref{t3} below,  establishes the commutator inequality \eqref{bs_t1_2_2} for normal element $a\in S(\calM)$, where $\calM$ is a finite factor, and provides upper and lower bounds on the optimal constant $C_{\calM}$. This yields a version of \cite[Theorem 1.1]{BBS}, suitable for normal elements.
We  consider the case of I$_n$-factors ($n<\infty$) in \cref{I_n}) and  the case of II$_1$-factors in \cref{t_211} and show that the commutator inequality holds for the constant $\frac{\sqrt{3}}{2}$. The proof for II$_1$-factors requires two additional results, \cref{t_21} and \cref{l_conj}. Furthermore, in order to prove the upper bounds in \cref{t3} we provide \cref{prop:restrict-to-permutation-matrix}. 

\begin{theorem}\label{I_n}
	Let $\calM=B(\calH)$ be an $I_n$-factor for $n\in\NN$. For an arbitrary normal operator $a\in \calM$ there is a unitary $u\in \calU(\calM)$ and a $z_0\in \CC$  such that
    \begin{align}\label{I_n_1}
      |[a,u]|\geq \frac{\sqrt{3}}{2}(|a - z_0\textbf{1}| + u|a-z_0\textbf{1}|u^*).
    \end{align}
    Moreover, $u$ can be chosen so that 
    \begin{itemize}
    	\item when $n$ is even there are projections $p_1,p_2$ such that $p_1+p_2=\textbf{1}$
    	\item when $n$ is odd there are projections $p_1,p_2,p_3,p_5$ such that $p_1+p_2+p_3+p_5=\textbf{1}$
    \end{itemize} so that
    $$p_ku=up_k,\ u^kp_k=p_k,\ k=1,2,3,5.$$
If $a\in \calM$ is such that its spectrum $\sigma(a)$ lies on a straight line, then we can obtain true equality:
    \begin{align}\label{I_n_2}
    |[a,u]|=|a - z_0\textbf{1}| + u|a-z_0\textbf{1}|u, \text{ for some } u^*=u\in \calU(\calM), z_0\in \CC.
    \end{align}
We remark that when $n=1,2$ every normal $a\in \calM$ satisfies this extra condition.
\end{theorem}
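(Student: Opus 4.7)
The plan is to transfer \cref{theorem-transformation-with-bound} to the operator setting via the spectral theorem. Diagonalise $a = \sum_{i=1}^{n} \lambda_i e_i e_i^*$ in an orthonormal eigenbasis $\{e_i\}$ of $\calH$, and identify $\calH$ with $L_2(\Omega_n)$ so that $a$ corresponds to multiplication by $g(i) = \lambda_i \in S(\Omega_n)$. Applying \cref{theorem-transformation-with-bound} then produces a point $z_0 \in \CC$, an automorphism $T \in \Aut_n$, and the cycle partition $\Omega_n = X_1 \cup X_2 \cup X_3 \cup X_5$ (with $X_3 = X_5 = \emptyset$ in the even case) satisfying $T^k|_{X_k} = \Id$ together with the pointwise estimate $|g \circ T - g| \geq \tfrac{\sqrt{3}}{2}(|g - z_0| + |g\circ T - z_0|)$. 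In particular, on $X_1$ this forces $g \equiv z_0$.

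Next, define the spectral projections $p_k = \sum_{i \in X_k} e_i e_i^* \in \calP(\calM)$, so that $p_1 + p_2 + p_3 + p_5 = \textbf{1}$. Construct $u$ blockwise: the identity on $p_1$, and on the span of each $k$-cycle of $T$ in $X_k$ (for $k \in \{2,3,5\}$) an appropriate unitary of order $k$ — for $k = 2$ the self-adjoint swap of the two corresponding eigenvectors. By construction $p_k u = u p_k$ and $u^k p_k = p_k$, giving the structural claim. Because $u$ commutes with every $p_k$, both sides of \eqref{I_n_1} decompose as direct sums over the $p_k$-blocks and the operator inequality reduces to finitely many block-wise verifications.

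The $p_1$-block is immediate (both sides vanish), and on each $2$-cycle the unitary is self-adjoint ($u^* = u$), so the scalar bound from \cref{theorem-transformation-with-bound} translates directly into the operator inequality, entry-wise in the eigenbasis on the $2$-dimensional subspace. The subtle part is the $3$- and $5$-cycle blocks: there $u \neq u^*$, and the entry-wise comparison in the eigenbasis matches the forward difference $|g(\omega) - g(T(\omega))|$ on the left-hand side with a sum involving the \emph{backward} distance $|g(T^{-1}(\omega)) - z_0|$ on the right-hand side, which is not directly covered by the scalar estimate from \cref{theorem-transformation-with-bound}. What rescues the argument is the angular output of the proof of \cref{theorem-transformation-with-bound}(ii): the images $g(\omega)$ in a given cycle lie at mutual angular separations of at least $\tfrac{2\pi}{3}$ at $z_0$, so \cref{l1} applies to \emph{every} pair in the cycle, not merely to consecutive pairs. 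Combining these refined pairwise angle bounds with an appropriate choice of order-$k$ unitary inside each cycle block (not necessarily the naïve cyclic-shift permutation) yields the block-wise operator inequality; reconciling the forward/backward mismatch is the technical heart of the proof and will be the main obstacle.

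Finally, for the straight-line addendum, suppose $\sigma(a) \subset \{z_1 + t e^{i\theta} : t \in \RR\}$. Then $b := e^{-i\theta}(a - z_1 \textbf{1})$ is self-adjoint, and \cref{bs_t1}(1) yields a self-adjoint $u \in \calU(\calM)$ together with $\lambda_0 \in \RR$ satisfying $|[b,u]| = u|b - \lambda_0\textbf{1}|u + |b - \lambda_0\textbf{1}|$. Setting $z_0 := z_1 + e^{i\theta}\lambda_0$ and using $|[a,u]| = |[b,u]|$ together with $|a - z_0\textbf{1}| = |b - \lambda_0\textbf{1}|$ transfers the identity to $a$, yielding \eqref{I_n_2}. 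The cases $n = 1, 2$ are subsumed here since any one- or two-point set in $\CC$ lies on a common line.
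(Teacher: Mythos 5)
Your route is the paper's: diagonalise $a$, apply \cref{theorem-transformation-with-bound}, take for $u$ the permutation (Koopman) unitary of $T$ and for $p_k$ the spectral projections of the sets $X_k$, and settle the straight-line addendum via \cref{bs_t1}(1); those parts, and the structural claims $p_ku=up_k$, $u^kp_k=p_k$, are fine. But your argument for \eqref{I_n_1} is incomplete exactly where you flag it: the $3$- and $5$-cycle blocks are never handled, and the rescue you sketch cannot work. For a $5$-cycle, the claim that the values of $g$ in one cycle are pairwise separated by angles at least $\tfrac{2\pi}{3}$ at $z_0$ is impossible (the angular gaps between five points around $z_0$ sum to $2\pi$); the construction in \cref{theorem-transformation-with-bound} guarantees the angle bound only for the consecutive pairs $(g(\omega),g(T\omega))$, so \cref{l1} does not apply to every pair. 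Worse, even on a $3$-cycle, where all pairwise angles can equal $\tfrac{2\pi}{3}$, the mixed inequality you would need, $|g(T^{-1}\omega)-g(\omega)|\ge\tfrac{\sqrt3}{2}\bigl(|g(\omega)-z_0|+|g(T\omega)-z_0|\bigr)$, involves the distance of a \emph{third} point and does not follow from any pairwise angle information: take the three values at distances $\varepsilon,1,\varepsilon$ from $z_0$ on rays at mutual angle $\tfrac{2\pi}{3}$; the left-hand side is $\varepsilon\sqrt3$ while the right-hand side is about $\tfrac{\sqrt3}{2}$. So the proposed ``refined angle bounds plus an appropriate order-$k$ unitary'' cannot close the step, and the step is the whole theorem for odd $n$.

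The gap disappears once you notice that no block-wise or entry-wise comparison is needed at all, which is how the paper proceeds: after conjugating by the diagonalising unitary $U$, the operators $a$, $uau^*$, $u^*au$, $|a-z_0\mathbf{1}|$ and its $u$-conjugates are simultaneously diagonal, i.e.\ multiplication operators, so the single pointwise estimate \eqref{e1} compares them directly as operators. Concretely, $|uau^*-a|=U^*M_{|g\circ T-g|}U$ and $u|a-z_0\mathbf{1}|u^*=U^*M_{|g\circ T-z_0|}U$, so \eqref{e1} gives $|uau^*-a|\ge\tfrac{\sqrt3}{2}\bigl(|a-z_0\mathbf{1}|+u|a-z_0\mathbf{1}|u^*\bigr)$ verbatim, with the same composition $g\circ T$ appearing on both sides; equivalently, since $|[a,u]|=u^*|uau^*-a|u=U^*M_{|g\circ T^{-1}-g|}U$, evaluating \eqref{e1} at $T^{-1}\omega$ yields $|[a,u]|\ge\tfrac{\sqrt3}{2}\bigl(|a-z_0\mathbf{1}|+u^*|a-z_0\mathbf{1}|u\bigr)$. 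The forward/backward tension you identified is therefore only the question of whether $u\cdot u^*$ or $u^*\cdot u$ sits on the right, which is settled by conjugating the whole inequality by $u$ (and is immaterial for every later use, e.g.\ \cref{t3}, where independent unitaries are allowed on the right); it is not something to be repaired by sharper geometry inside the cycles. As written, your proposal does not prove \eqref{I_n_1} for odd $n$.
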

\begin{proof}
	Since $a$ is a normal element on an $n$-dimensional Hilbert space, it follows from the spectral mapping theorem that there is a unitary $U:\calH\to L_2(\Omega_n)$ such that $a = U^*M_{g}U$, where $M_g$ is the multiplication operator on $L_2(\Omega_n)$ for some $g\in L_\infty(\Omega_n)$. Applying \cref{theorem-transformation-with-bound} to $g$, we find a transformation $T$ and a $z_0\in \CC$ such that
	\begin{align} \label{eq:I_n-function-inequality}
		|g\circ T - g| \geq \frac{\sqrt{3}}{2}(|g-z_0| + |g\circ T-z_0|)
	\end{align} together with the given partition of $\Omega_n$ consisting of the sets $X_1,X_2$ (when $n$ is even)  or $X_1,X_2,X_3,X_5$ (when $n$ is odd) and that satisfy $T^k|_{X_k} = \Id_{X_k}$. Now let $u_{T}$ be the Koopman operator on $L_2(\Omega_n)$ corresponding to $T$, i.e. $u_Tf = f\circ T$. Denote $u = U^*u_TU$. Then
	\begin{align*}
		|[a,u]|  &= |u(u^*au - a)| \\
		&= |uau^* - a| \\
		&= U^*|u_TM_gu_T^*-M_g|U \\
		&= U^*|M_{g\circ T} - M_g|U \\
		&= U^*M_{|g\circ T - g|}U\\
		&\geq \Lambda_n\left(U^*|M_g - z_0|U + U^*|M_{g\circ T} - z_0|U\right)\\
		 &= \Lambda_n\left(U^*|M_{g} - z_0|U + U^*u_{T}|M_{g} - z_0|u_{T}^*U\right).\\
		 &= \Lambda_n\left(|a-z_0| + u|a-z_0|u^*\right)
	\end{align*}
	We now define the projections by setting $p_k = U^*\chi_{X_k}U$ which clearly satisfy the statements.
	
    If $\sigma(a)$ lies on a straight line, then there exist scalars $\alpha,\beta\in\CC,\ |\alpha|=1,$ such that $a_1:=\alpha(a-\beta\textbf{1})\in \calM$ is self-adjoint.
     It follows from  \cref{bs_t1} that
    there exist  $z_0\in\RR$ and $u=u^*\in\calU(\calM)$ that 
    $$|[a,u]|=|[a_1,u]|=|a_1 - z_0\textbf{1}| + u|a_1-z_0\textbf{1}|u=|a - (\beta+z_0\alpha^{-1})\textbf{1}| + u|a-(\beta+z_0\alpha^{-1})\textbf{1}|u.$$
\end{proof}
We need the following result, which for a diffuse semi-finite von Neumann algebra $(\calM,\tau)$ and a normal measurable $a\in S(\calM)$ establishes an injective $*$-homomorphism $F$ between $S[0,1]$ and $S(\calM)$ which preserves measure and is such that $a$ lies in the image of $F$.
Special cases of the result which follows for positive bounded elements of $\calM$ and positive elements of $L_1(M,\tau)$ can be found in \cite[Lemma 9]{DSZ} and in \cite[Lemma 4.1]{CS} respectively.

\begin{theorem}\label{t_21}
    Let $\calM$ be a diffuse (i.e. atomless) von Neumann algebra with a faithful normal tracial state $\tau$, let $a\in S(\calM)$ be a normal operator. There exists such an injective $*$-homomorphism $F:S[0,1]\to S(\calM)$ such that $a\in \Image(F)$ and $m(A)=\tau(F(\chi_A))$ for any measurable subset $A\subset [0,1]$ (here $m$ is the Lebesgue measure on $[0,1]$).
\end{theorem}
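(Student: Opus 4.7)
The plan is to embed $a$ into a diffuse abelian von Neumann subalgebra $\calB\subseteq\calM$ carrying a measure-preserving $*$-isomorphism $F_0:L_\infty[0,1]\to\calB$, and then extend $F_0$ to an injective $*$-homomorphism $F:S[0,1]\to S(\calM)$ at the level of measurable operators.

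First I would let $\calA\subseteq\calM$ be the abelian von Neumann subalgebra generated by the spectral projections $E(\cdot)$ of $a$, so that $\calA\cong L_\infty(\CC,\mu)$ via the Borel functional calculus with $\mu(B):=\tau(E(B))$, and with $a$ corresponding to the identity function $z\mapsto z\in S(\calA)$. Splitting $\mu$ into its continuous and purely atomic parts produces a projection $p_c\in\calA$ together with atomic projections $p_n\in\calA$ at the point-mass values $z_n$, with $p_c+\sum_n p_n=\mathbf{1}$ and $p_n a = z_n p_n$. The continuous piece $p_c\calA$ is already diffuse, but each $p_n$ is a minimal projection of $\calA$ and must be refined. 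For this I would observe that each corner $p_n\calM p_n$ is itself diffuse (any minimal projection there would also be minimal in $\calM$) and invoke the classical Maharam-type result --- generalising \cite[Lemma 9]{DSZ} and \cite[Lemma 4.1]{CS} --- that any diffuse tracial abelian von Neumann algebra of trace mass $t$ contains a diffuse abelian subalgebra measure-preservingly $*$-isomorphic to $L_\infty([0,t])$. Applying this inside each $p_n\calM p_n$ yields a diffuse abelian $\calB_n\subseteq p_n\calM p_n$ with a measure-preserving $*$-isomorphism $\calB_n\cong L_\infty([0,\tau(p_n)])$; note $p_n a=z_n p_n\in\calB_n$ as $p_n$ is the unit of $\calB_n$. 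Let $\calB$ be the von Neumann algebra generated by $p_c\calA$ together with all the $\calB_n$. Because the blocks sit in mutually orthogonal corners they pairwise commute, so $\calB$ is abelian and diffuse, it contains every spectral projection of $a$, and $a$ is affiliated with it. A final application of the same Maharam-type fact to $(\calB,\tau|_\calB)$ produces the desired $F_0:L_\infty[0,1]\to\calB$.

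Finally I would extend $F_0$ to $F:S[0,1]\to S(\calB)\subseteq S(\calM)$ by sending $f\in S[0,1]$ to the affiliated operator whose spectral projection on a Borel set $B\subseteq\CC$ is $F_0(\chi_{f^{-1}(B)})$; equivalently, $F(f)$ is the measure-topology limit of $F_0(f\chi_{\{|f|\leq n\}})$. This gives an injective $*$-homomorphism since $F_0$ is, the measure-preservation $\tau(F(\chi_A))=m(A)$ is inherited from $F_0$, and $a\in\Image(F)$ because $a$ is affiliated with $\calB$ and hence equals $F(f)$ for $f:=F_0^{-1}$ applied to the identity function on $\sigma(a)$. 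The main obstacle is the Maharam-type embedding used inside each corner $p_n\calM p_n$, which must be made measure-preserving in the finite tracial setting without assuming separability of the predual; once this ingredient is in place, the construction of $\calB$, the assembly from orthogonal corners, and the passage from $F_0$ to $F$ via affiliated operators are all routine.
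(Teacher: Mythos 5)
Your overall route is the same as the paper's: split the spectral measure $e(\cdot)$ of $a$ into its continuous part and its atoms, use diffuseness to refine each atom $e(\{z_n\})$ (the paper does this by choosing an increasing chain of projections $f^{t}_s\uparrow e(\{t\})$ with $\tau(f^t_s)=s$, which is exactly your measure-preserving copy of $L_\infty([0,\tau(p_n)])$ inside the corner), glue the pieces into an atomless probability algebra, identify it with $[0,1]$, and pass to measurable operators. Two comments on the corner step: the lemma you quote is stated for abelian algebras but applied to the non-abelian corner $p_n\calM p_n$ (harmless: take a masa of the corner, or build the chain of projections directly), and, contrary to your closing remark, this ingredient needs no separability at all --- a dyadic-refinement/exhaustion argument produces the trace-parametrised chain in any diffuse von Neumann algebra with a faithful normal finite trace, which is precisely what the paper does.

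The genuine gap is the last step. The ``Maharam-type fact'' as you state it only \emph{embeds} $L_\infty[0,1]$ into a given diffuse tracial abelian algebra; applied to $(\calB,\tau|_{\calB})$ it yields some diffuse subalgebra of $\calB$, and there is no reason for that subalgebra to contain the bounded Borel functions of $a$. Your verification that $a\in\Image(F)$, however, applies $F_0^{-1}$ to such functions, i.e.\ it requires $F_0$ to be \emph{onto} $\calB$ (or at least onto a subalgebra with which $a$ is affiliated), which the embedding fact does not provide. What is actually needed --- and what the paper invokes --- is the isomorphism theorem for atomless Lebesgue (standard) probability spaces (\cite[Theorem 9.5.1]{BOG2} in the paper's proof): the measure algebra of your $\calB$ is the direct sum of the continuous part of $\tau\circ e$ on $\sigma(a)\subseteq\CC$ (standard, since the Borel $\sigma$-algebra of a Polish space is countably generated) and countably many Lebesgue intervals, hence atomless and standard, hence measure-preservingly isomorphic to $([0,1],m)$; this gives a surjective trace-preserving $*$-isomorphism $F_0:L_\infty[0,1]\to\calB$, which then extends to $S[0,1]\to S(\calB)$ at the level of measurable operators. (Note also that the ``identity function on $\sigma(a)$'' may be unbounded, so the preimage of $a$ must be produced through this extension, not by applying $F_0^{-1}$ inside $L_\infty$.) With the surjective identification in place of the embedding lemma, your argument becomes essentially the paper's proof, rephrased in terms of subalgebras instead of the spectral measure $g$ on the glued space $X$.
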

\begin{proof}
    Let $e$ be a spectral measure of the operator $a$ defined on
    the $\sigma$-algebra $\calB(\sigma(a))$ of Borel subsets in $\sigma(a)$. Then $\tau(e(\cdot))$ yields a probability measure on $\calB(\sigma(a))$.
    By the spectral theorem (see \cite[Theorem 13.33]{Rudin}), we have
    $$a=\int_{\sigma(a)}\lambda de(\lambda).$$

    Let $X_0$ be a set of eigenvalues $a$.  It is clear that $X_0\subset\sigma(a)$ and $card(X_0)\leq\aleph_0$. Indeed, if $t\in X_0$ then $e(\{t\})\neq 0$ and
        $\sum_{t\in X_0}\tau(e(\{t\}))=\tau(e(X_0))\leq 1$.
	Let $t\in X_0$. Since $\calM$ is diffuse, it follows that in $\calM$ there is such a chain of projections $f^t_s\uparrow_s e(\{t\})$ such that $\tau(f^t_s)=s,\ s\in Y_t:=[0,\tau(e(\{t\}))]$.

    Denote by $f_t$ the spectral measure by $\calB(Y_t)$ given by the equality $$f_t((s_1,s_2))=f^t_{s_2}-f^t_{s_1}.$$
    We have $\tau(f_t(A))=m(A)$ for any $a\in\calB(Y_t)$.
    Let us now set
    $$X=(\sigma(a)\setminus X_0)\sqcup \bigsqcup_{t\in X_0}Y_t.$$
    On $\calB(X)$, we define a spectral measure $g$ such that
    $$g|_{\calB(\sigma(a)\setminus X_0)}=e|_{\calB(\sigma(a)\setminus X_0)},\ g|_{\calB(Y_t)}=f_t,\ t\in X_0,$$
    and a scalar measure
    $$\mu_X(A)=\tau(e(A\cap (\sigma(a)\setminus X_0)))+\sum_{t\in X_0}\mu(A\cap Y_t).
    $$

    It follows that $(X,\calB(X),\mu_X)$ is a Lebesgue space with an atomless probability measure.  Hence, it is isomorphic to the segment
    $[0,1]$ equipped with Lebesgue measure $m$,  see e.g. \cite[Theorem 9.5.1]{BOG2}.

    A linear mapping $F:S(X,\calB(X),\mu_X)\to S(\calM)$ is defined  by 
    $$F(\varphi)=\int_X \varphi(x)dg(x)$$ for any $\varphi\in S(X,\calB(X),\mu_X)$ (see \cite[Definition 1.5.6]{DdPS}). We remark that  $F(\chi_A)=g(A)$ for measurable $A\subseteq X$ and that $\mu_{X}(A) = \tau(F(\chi_{A}))$. 
    Furthermore $F(\chi_{A}\chi_{B}) = F(\chi_{A\cap B}) = g(A\cap B) = g(A)g(B) = F(\chi_{A})F({\chi_B})$ for measurable sets $A,B\subseteq X$. Therefore, as $F$ is continuous with respect to the topologies of convergence in measure in  $S(X,\calB(X),\mu_{X})$ and $S(\calM,\tau)$ and since simple functions in $S(X,\calB(X),\mu_{X})$ are dense with respect to the measure topology, it follows that $F(\varphi\psi) = F(\varphi)F(\psi)$ for all $\varphi,\psi \in S(X,\calB(X),\mu_X)$.
    Moreover, $F(\overline{\varphi})=\int_X \overline{\varphi(x)}dg(x)=F(\varphi)^*$ so we find that $F$ is a $*$-homomorphism. Now, suppose $\varphi \in S(X,\calB(X),\mu_{X})$ is such that $F(\varphi)=0$ and $B\subseteq X$ is such that $\varphi(x)\not=0$ for a.e. $x\in B$. Then 
    $g(B) = F(\chi_{B}) = F(\frac{1}{\varphi}\chi_{B})F(\varphi)=0$, thus $\mu_{X}(B)=\tau(g(B))=0$. This shows that $F$ is injective.

    Finally, let us define the function $f$ by setting $f(t)=t$ for $t\in \calB(\sigma(a)\setminus X_0)$ or $t\in Y_t$. Then $f\in S(X,\calB(X),\mu_X)$ and $F(f)=a$.
\end{proof}

\begin{lemma}\label{l_conj}
Let $\calM$ be a finite von Neumann algebra, let $a,b\in S(\calM)$ be normal operators, $z_0\in\mathbb{C},\ 0\leq\alpha<\pi$ and let $\sigma(a),\ \sigma(b)$ be $(z_0,\alpha)$-conjugate sets.
Then 
\begin{align}\label{eq:lemma-conjugate-sets-eq}
	v|a-b|v^*\geq (|a-z_0\textbf{1}|+|b-z_0\textbf{1}|)\cos\frac{\alpha}{2}
\end{align}
for some $v\in \calU(\calM)$.
\end{lemma}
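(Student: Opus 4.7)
The plan is to reduce the statement via translation and rotation to a convenient normal form, turn the geometric condition on spectra into an operator inequality via functional calculus, and then apply \cref{p_AAP_21}, exploiting finiteness of $\calM$ to upgrade the resulting isometry to a unitary.

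First I would reduce to the case $z_0=0$ by replacing $a,b$ with $a-z_0\mathbf{1},b-z_0\mathbf{1}$, which leaves $|a-b|$ unchanged and turns $|a-z_0\mathbf{1}|,|b-z_0\mathbf{1}|$ into $|a|,|b|$. Next, since the two $(0,\alpha)$-conjugate corners have a common bisecting line through $0$, there is a unimodular scalar $\eta\in\CC$ such that $\eta\sigma(a)$ lies in the closed right corner $\{re^{i\theta}:r\geq 0,\,|\theta|\leq\alpha/2\}$ and $\eta\sigma(b)$ in the closed left corner $\{re^{i\theta}:r\geq 0,\,|\theta-\pi|\leq\alpha/2\}$. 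Since $|\eta a|=|a|$, $|\eta b|=|b|$ and $|\eta(a-b)|=|a-b|$, it suffices to prove the lemma for $\eta a,\eta b$, so WLOG I can assume $z_0=0$ and that the spectra lie in these two specific corners.

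The key geometric observation is then that on each corner the scalar inequalities $\mathrm{Re}(\lambda)\geq|\lambda|\cos(\alpha/2)$ (for $\lambda$ in the right corner) and $-\mathrm{Re}(\mu)\geq|\mu|\cos(\alpha/2)$ (for $\mu$ in the left corner) hold. Because $a$ and $b$ are normal, I would apply the Borel functional calculus to upgrade these to operator inequalities:
\begin{equation*}
\mathrm{Re}(a)\geq |a|\cos(\alpha/2),\qquad -\mathrm{Re}(b)\geq |b|\cos(\alpha/2).
\end{equation*}
Summing these two inequalities gives
\begin{equation*}
\mathrm{Re}(a-b)=\mathrm{Re}(a)-\mathrm{Re}(b)\geq (|a|+|b|)\cos(\alpha/2)\geq 0.
\end{equation*}
In particular, $\mathrm{Re}(a-b)_{+}=\mathrm{Re}(a-b)$ since the right-hand side is already positive.

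Finally I would apply \cref{p_AAP_21} to the element $x:=a-b\in S(\calM)=LS(\calM)$ (the equality holds since $\calM$ is finite), yielding an isometry $v\in\calM$ with $\mathrm{Re}(a-b)_{+}\leq v|a-b|v^{*}$. Because $\calM$ is finite, every isometry in $\calM$ is automatically a unitary (the range projection $vv^{*}$ is equivalent to $\mathbf{1}$, hence equals $\mathbf{1}$), so $v\in\calU(\calM)$. Combining the two displayed inequalities gives exactly \eqref{eq:lemma-conjugate-sets-eq}. The main subtlety here is really just the conversion from isometry to unitary, which is the only place finiteness is used; the rest is a clean reduction plus functional calculus. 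The case $\alpha=\pi$ being excluded is harmless, since only the strict inequality $\alpha<\pi$ was needed to guarantee $\cos(\alpha/2)>0$ (though the argument actually works for $\alpha=\pi$ too, yielding the trivial estimate).
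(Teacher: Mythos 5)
Your proposal is correct and follows essentially the same route as the paper's proof: translate and rotate so the two corners are symmetric about the real axis, use the spectral (functional calculus) inequalities $\Re a_1\geq|a_1|\cos\frac{\alpha}{2}$ and $-\Re b_1\geq|b_1|\cos\frac{\alpha}{2}$, and then apply \cref{p_AAP_21} to $a_1-b_1$, using finiteness of $\calM$ to turn the isometry into a unitary. The only cosmetic difference is that the paper simply bounds $\Re(a_1-b_1)\leq\Re(a_1-b_1)_+$ rather than observing the two coincide.
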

\begin{proof}
Since $\sigma(a)$ and $\sigma(b)$ are ($z_0$,$\alpha$)-conjugate, the shifted sets $\sigma(a)-z_0$ and $\sigma(b)-z_0$ are ($0$,$\alpha$)-conjugate. We can then obtain a pair of lines as in \cref{fig:conjugate-sets}, intersecting at the origin with an angle $\alpha$. 
By rotating the complex plane around the origin we can assure that these lines are symmetric with respect to the real axis. This is to say that there exists a function $f(z)=c(z-z_0)$ with $|c|=1$ so that
$$f(\sigma(a))\subset\{z:\ -\frac{\alpha}{2}\leq Arg(z)\leq \frac{\alpha}{2}\},\ f(\sigma(b))\subset\{z:\ \pi-\frac{\alpha}{2}\leq Arg(z)\leq \pi+\frac{\alpha}{2}\}.$$
Let $a_1=f(a),\ b_1=f(b)$. We have
$$|a_1|\cos\frac{\alpha}{2}\leq\Re a_1,\ |b_1|\cos\frac{\alpha}{2}\leq-\Re b_1.$$
Therefore
\begin{align}\label{eq:lemma-conjugate-sets-eq1}
 (|a-z_0\textbf{1}|+|b-z_0\textbf{1}|)\cos\frac{\alpha}{2}=(|a_1|+|b_1|)\cos\frac{\alpha}{2}\leq \Re a_1-\Re b_1
 =\Re(a_1-b_1)\leq\Re(a_1-b_1)_+
\end{align}
By \cref{p_AAP_21}, we obtain 
\begin{align}\label{eq:lemma-conjugate-sets-eq2}
	\Re(a_1-b_1)_+\leq v|a_1-b_1|v^*=v|a-b|v^*.
\end{align}
for some $v\in \calM$ with $v^*v=\mathbf{1}$. Since $\textbf{1}$ is a finite projection it follows that $vv^*=\textbf{1}$, i.e. $v\in \calU(\calM)$.
Combining \eqref{eq:lemma-conjugate-sets-eq1} and \eqref{eq:lemma-conjugate-sets-eq2} establishes \eqref{eq:lemma-conjugate-sets-eq}
\end{proof}

We now prove a version of  \cref{I_n} for II$_1$-factors. Equation \eqref{t_211_1} below is slightly different from \eqref{I_n_1} as it involves a second unitary $w\in \calU(\calM)$.

\begin{theorem}\label{t_211}
    Let $\calM$ be a factor of type II$_1$, $a\in S(\calM)$ be normal. Then there exists a $z_0\in\mathbb{C}$, $u=u^*\in \calU(\calM)$ and $w\in \calU(\calM)$ so that
    \begin{align}\label{t_211_1}
        w|[a,u]|w^*\geq \frac{\sqrt{3}}{2}\cdot(|a-z_0\textbf{1}|+u|a-z_0\textbf{1}|u).
    \end{align}
    If $\sigma(a)$ lies on a straight line then
    \begin{align}\label{t_211_11}
    |[a,u]|=|a-z_0\textbf{1}|+u|a-z_0\textbf{1}|u.
    \end{align}

\end{theorem}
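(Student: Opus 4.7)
The plan is to transfer the problem from $S(\calM)$ to a measurable-function setting via \cref{t_21}, apply the partition result \cref{theorem-transformation-with-bound} with $n=\infty$, build a self-adjoint unitary $u$ that swaps the relevant projections using Murray--von Neumann equivalence (but without attempting to implement any specific automorphism of an abelian subalgebra), and finally obtain the correction $w$ by applying \cref{l_conj} corner-by-corner. This route avoids the difficulty of implementing an automorphism of $F(L_\infty[0,1])$ by a self-adjoint unitary, which is genuinely delicate in a general II$_1$ factor.

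First, by \cref{t_21} pick an injective trace-preserving $*$-homomorphism $F\colon S[0,1]\to S(\calM)$ and $g\in S[0,1]$ with $F(g)=a$. Applying \cref{theorem-transformation-with-bound} to $g$ yields $z_0\in\CC$, an involutive $T\in\Aut_\infty$, and a partition $\Omega_\infty=X_1\sqcup\bigsqcup_{m\geq 1}(X_2^{m,1}\sqcup X_2^{m,2})$ with $T|_{X_1}=\Id$, $T(X_2^{m,1})=X_2^{m,2}$, $\mu_\infty(X_2^{m,1})=\mu_\infty(X_2^{m,2})$, $g(X_1)\subseteq\{z_0\}$, and $g(X_2^{m,1}),g(X_2^{m,2})$ $(z_0,\pi/3)$-conjugate. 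Setting $p_1=F(\chi_{X_1})$, $p_2^{m,i}=F(\chi_{X_2^{m,i}})$ and $a_i^m:=p_2^{m,i}a\in p_2^{m,i}\calM p_2^{m,i}$, one has $ap_1=z_0p_1$ and $\tau(p_2^{m,1})=\tau(p_2^{m,2})$. Choose any partial isometries $v_m\in\calM$ with $v_mv_m^*=p_2^{m,1}$, $v_m^*v_m=p_2^{m,2}$, and define $u:=p_1+\sum_m(v_m+v_m^*)$, a self-adjoint unitary squaring to $\mathbf{1}$.

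Since $[a,u]=u(uau-a)$ and $a$ commutes with the $p_1, p_2^{m,i}$, a direct block computation shows that $uau$ is block-diagonal with blocks $z_0p_1,\ v_ma_2^mv_m^*,\ v_m^*a_1^mv_m$, so
\[
|[a,u]|=\sum_m\bigl(|v_ma_2^mv_m^*-a_1^m|+|v_m^*a_1^mv_m-a_2^m|\bigr),
\]
the summands living in mutually orthogonal corners. In $p_2^{m,1}\calM p_2^{m,1}$ the normal operators $a_1^m$ and $v_ma_2^mv_m^*$ have spectra in $g(X_2^{m,1})$ and $g(X_2^{m,2})$ respectively (conjugation by $v_m$ preserves the spectrum of $a_2^m$), and these are $(z_0,\pi/3)$-conjugate; by \cref{l_conj} with $\cos(\pi/6)=\tfrac{\sqrt3}{2}$ there is $w_m\in\calU(p_2^{m,1}\calM p_2^{m,1})$ with
\[
w_m|v_ma_2^mv_m^*-a_1^m|w_m^*\geq\tfrac{\sqrt3}{2}\bigl(|a_1^m-z_0p_2^{m,1}|+|v_ma_2^mv_m^*-z_0p_2^{m,1}|\bigr),
\]
and similarly $w_m'\in\calU(p_2^{m,2}\calM p_2^{m,2})$ for the other corner.

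Setting $w:=p_1+\sum_m(w_m+w_m')\in\calU(\calM)$ and summing the corner estimates, the block decompositions $|a-z_0\mathbf{1}|=\sum_m(|a_1^m-z_0p_2^{m,1}|+|a_2^m-z_0p_2^{m,2}|)$ and $u|a-z_0\mathbf{1}|u=\sum_m(|v_ma_2^mv_m^*-z_0p_2^{m,1}|+|v_m^*a_1^mv_m-z_0p_2^{m,2}|)$ — the latter relying on the absolute-value identity $v_m|Y|v_m^*=|v_mYv_m^*|$ for positive $Y$ in the appropriate corner, together with $v_mp_2^{m,2}v_m^*=p_2^{m,1}$ — combine to yield $w|[a,u]|w^*\geq\tfrac{\sqrt3}{2}(|a-z_0\mathbf{1}|+u|a-z_0\mathbf{1}|u)$. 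For the equality statement \eqref{t_211_11}, if $\sigma(a)$ lies on a line then $a_1:=\alpha(a-\beta\mathbf{1})$ is self-adjoint for some $|\alpha|=1$, $\beta\in\CC$, and applying \cref{bs_t1}(1) to $a_1$ yields $\lambda_0\in\RR$ and $u=u^*\in\calU(\calM)$ with $|[a_1,u]|=|a_1-\lambda_0|+u|a_1-\lambda_0|u$; setting $z_0:=\beta+\alpha^{-1}\lambda_0$ transfers the equality to $a$. I expect the main technical obstacle will be the bookkeeping of block-corner identities — particularly the absolute-value identity above — needed to identify $u|a-z_0\mathbf{1}|u$ correctly on each pair of partition projections.
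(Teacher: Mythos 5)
Your proposal is correct and takes essentially the same route as the paper's proof: transfer to a function model via \cref{t_21}, apply \cref{theorem-transformation-with-bound}, build the self-adjoint unitary $u$ from partial isometries between the trace-equal projections (without implementing $T$), and apply \cref{l_conj} corner-by-corner to assemble $w$, with the equality case handled via \cref{bs_t1} exactly as in the paper. The only cosmetic difference is that the paper obtains the estimate on the second family of corners by conjugating the first-corner estimate with $u$ (using $u|a-uau|u=|a-uau|$) instead of invoking \cref{l_conj} a second time.
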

\begin{proof}
 Let $\tau$ be a faithful normal tracial state on $\calM$ and let $F:S[0,1]\to S(\calM)$ be an injective $*$-homomorphism from Theorem \ref{t_21} satisfying $a\in \Image(F)$ . Let $g=F^{-1}(a)$.

 It follows from \cref{theorem-transformation-with-bound} that there exists $z_0$ such that $[0,1]$ can be divided into disjoint  measurable parts $\{X_1\}\cup \{X_2^{m,i}: m\geq 1, 1\leq i\leq 2\}$ so that $g(X_1)\subset\{z_0\},\ \mu(X_2^{m,1})=\mu(X_2^{m,2})$ and the sets $g(X_2^{m,1}),\ g(X_2^{m,2})$ are $(z_0,\frac{\pi}{3})$-conjugate for $m=1,2,\dots$ (where $\mu$ is the Lebesgue measure on $[0,1]$).

 Let $e=F(\chi_{X_1}),\ p_m=F(\chi_{X_2^{m,1}}),\ q_m=F(\chi_{X_2^{m,2}}),\ m=1,2,\dots$. Then $p_m\sim q_m,\ m=1,2,\dots$, since $\tau(p_m)=\mu(X_2^{m,1})=\mu(X_2^{m,2})=\tau(q_m)$. Besides $e+\sum_{m\geq 1}(p_m+q_m)=\textbf{1}$.
 Hence, there exists such $u=u^*\in \calU(\calM)$ that
 $$ue=e,\ up_m=q_mu,\ m=1,2,\dots.$$
 Note also that $p_mu=uq_m$ since $u$ self-adjoint.
 It is clear that
 $$|[a,u]|e=|[a-z_0\textbf{1},u]|e=0=(|a-z_0\textbf{1}|+u|a-z_0\textbf{1}|u)e.$$
 For any $m=1,2,\dots$ $\sigma(ap_m)$ coincides with the set $A_m$ of essential values of the function $g|_{X_{2}^{m,1}}$ and $\sigma(uaup_m)=\sigma(aq_m)$ coincides with the set $B_m$ of essential values of the function $g|_{X_{2}^{m,2}}$ (here the operators $ap_m$ and $uaup_m$ are considered as elements of the algebra $p_m\calM p_m$).
 The sets $A_m$ and $B_m$ are $(z_0,\frac{\pi}{3})$-conjugate sets. It follows from the Lemma \ref{l_conj} that
 \begin{align}\label{t_211_2}
v_m|a-uau|v_m^*p_m=v_m|a-uau|p_mv_m^*\geq \frac{\sqrt{3}}{2}\cdot(|a-z_0\textbf{1}|+u|a-z_0\textbf{1}|u)p_m
\end{align}
 for some $v_m\in \calU(p_m\calM p_m)$.

 Applying the automorphism $u\cdot u$ to (\ref{t_211_2}), and noting that $u|a-uau|u = |a-uau|$, we obtain
 \begin{align}\label{t_211_3}
(uv_mu)|a-uau|(uv_mu)^*q_m\geq \frac{\sqrt{3}}{2}\cdot(|a-z_0\textbf{1}|+u|a-z_0\textbf{1}|u)q_m.
 \end{align}
 To complete the proof, it remains to define
 $$w=e+\sum_{n=1}^{\infty}(v_n+uv_nu)$$
 which is a unitary (the series converges in the strong operator topology) (note here that $uv_mu\in \calU(q_m\calM q_m)$).
 We observe that
 \begin{align}
 	w|[a,u]|w^*p_n = w|[a,u]|p_nv_n^*p_n = wp_n|[a,u]|v_n^*p_n = v_n|a-uau|v_n^*p_n
 \end{align}
and similarly, $w|[a,u]|w^* q_n = (uv_nu)|a-uau|(uv_nu)^*q_n$ and $w|[a,u]|w^*e = |[a,u]|e =0$. Summing up the inequalities (\ref{t_211_2}) and (\ref{t_211_3}) in the measure topology we arrive at 
\begin{align*}
	w|[a,u]|w^* &= w|[a,u]|w^*e + \sum_{n=1}^{\infty}w|[a,u]|w^* (p_n + q_n)  \\
	&= \sum_{n=1}^{\infty}v_n|a-uau|v_n^*p_n + (uv_nu)|a-uau|(uv_nu)^*q_n\\
	&\geq \sum_{n=1}^{\infty}\frac{\sqrt{3}}{2}(|a-z_0\mathbf{1}| + u|a-z_0\mathbf{1}|u)(p_n+q_n)\\
	&= \frac{\sqrt{3}}{2}(|a-z_0\mathbf{1}| + u|a-z_0\mathbf{1}|u)
\end{align*}
which proves \eqref{t_211_1}.
Regarding the proof of equality \eqref{t_211_11}, see the end of the proof of the Theorem \ref{I_n}.

\end{proof}
	We have now established in \cref{I_n} and \cref{t_211} that for finite factors the commutator estimate \eqref{bs_t1_2_2} holds with the constant $\frac{\sqrt{3}}{2}$. However, this may not be the best constant for which,  for all normal $a\in \calM$, the inequality holds. We will now establish upper bounds on the best possible constant and we will in particular show that $\frac{\sqrt{3}}{2}$ is in fact the best possible constant when $\calM$ is a II$_1$-factor or a I$_n$-factor ($n<\infty$) with $n\equiv 0\mod 3$, .
	To do this we need the following proposition, which is partly motivated by the proof of \cite[Theorem 1]{HoWi}. Here, for a given algebra $\calA$ we denote by $\Mat_n(\calA)$  the set of all $n\times n$ matrices with entries in $\calA$.

\begin{proposition}\label{prop:restrict-to-permutation-matrix}
    Let $\calN$ be a finite factor with a faithful normal tracial state $\tau_{\calN}$, $\mathbb{M}_n=\Mat_n(\CC),\ n\in\NN$,  $\calM=\mathbb{M}_n\otimes\calN\cong \Mat_n(\calN)$, $\tau_{\calM}=\frac{1}{n}Tr\otimes\tau_{\calN}$ be a tracial state on $\calM$.
    Denote $\calU_n^{per}\subseteq \mathbb{M}_n$ for the group of permutation matrices and $\mathbb{D}_n\subseteq \mathbb{M}_n$ for the set of diagonal matrices.

    If $a\in \mathbb{D}_n\otimes\textbf{1}_{\calN}$ then
    \begin{align*}
        \sup_{u\in \calU(\calM)}\|a - u^*au\|_{2} = \max_{u \in \calU_n^{per}\otimes\textbf{1}_{\calN}}\|a - u^*au\|_{2}
    \end{align*}
    (The isomorphism (identification) of $\Mat_n(\calN)\to \mathbb{M}_n\otimes\calN$ is given by the mapping $(a_{ij})_{i,j=1}^n\to\sum_{i,j=1}^n \alpha_{ij}\otimes a_{ij}$ where $\alpha_{ij}$ are matrix units of $\mathbb{M}_n$.)
  
\end{proposition}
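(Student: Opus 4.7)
The plan is to linearize $\|a-u^*au\|_2^2$ as an affine functional on the doubly stochastic matrices associated with $u$ and then invoke the Birkhoff--von Neumann theorem.

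First, I would expand
\begin{align*}
\|a - u^*au\|_2^2 = 2\|a\|_2^2 - 2\,\Re\,\tau_{\calM}(a^*u^*au),
\end{align*}
using traciality of $\tau_{\calM}$, so that maximizing $\|a-u^*au\|_2$ over $u\in\calU(\calM)$ is equivalent to minimizing $\Re\,\tau_{\calM}(a^*u^*au)$. Writing $a=\sum_{i=1}^n\lambda_i(\alpha_{ii}\otimes\mathbf{1}_{\calN})$ and identifying $u$ with its matrix $(u_{kl})_{k,l=1}^n\in\Mat_n(\calN)$, a direct entrywise computation yields
\begin{align*}
\tau_{\calM}(a^*u^*au) = \frac{1}{n}\sum_{i,k=1}^n \overline{\lambda_i}\,\lambda_k\,p_{ik},\qquad p_{ik}:=\tau_{\calN}(u_{ki}^*u_{ki}).
\end{align*}

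Second, I would verify that $P=(p_{ik})_{i,k=1}^n$ is doubly stochastic. The $p_{ik}$ are manifestly nonnegative. The $(i,i)$-entry of $u^*u=\mathbf{1}_{\calM}$ gives $\sum_k u_{ki}^*u_{ki}=\mathbf{1}_{\calN}$, hence $\sum_k p_{ik}=1$; applying $\tau_{\calN}$ to the $(k,k)$-entry of $uu^*=\mathbf{1}_{\calM}$ and using traciality of $\tau_{\calN}$ yields $\sum_i p_{ik}=\sum_i \tau_{\calN}(u_{ki}u_{ki}^*)=1$. So $P$ lies in the Birkhoff polytope of $n\times n$ doubly stochastic matrices.

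Third, I would apply the Birkhoff--von Neumann theorem: the set of doubly stochastic matrices is compact and convex with extreme points exactly the $n!$ permutation matrices. Since the map $P\mapsto \Re\sum_{i,k}\overline{\lambda_i}\lambda_k p_{ik}$ is affine, its extrema over this set are attained at permutation matrices. Finally, for each $\sigma\in S_n$, the permutation unitary $u_\sigma\otimes\mathbf{1}_{\calN}\in\calU_n^{per}\otimes\mathbf{1}_{\calN}$ with $(u_\sigma)_{kl}=\delta_{k,\sigma(l)}$ produces precisely the doubly stochastic matrix with entries $p_{ik}=\delta_{k,\sigma(i)}$, so every permutation matrix arises from some such unitary; in particular the maximum over permutation matrices is realized inside $\calU_n^{per}\otimes\mathbf{1}_{\calN}$. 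Combining these observations identifies $\sup_{u\in\calU(\calM)}\|a-u^*au\|_2$ with $\max_{u\in\calU_n^{per}\otimes\mathbf{1}_{\calN}}\|a-u^*au\|_2$.

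The conceptual core is simply Birkhoff--von Neumann applied to a linear trace functional; the only mildly delicate point is the careful index bookkeeping in computing $\tau_{\calM}(a^*u^*au)$ and in verifying double stochasticity of $P$ from the unitarity relations $u^*u=uu^*=\mathbf{1}_{\calM}$. The hypothesis that $a\in\mathbb{D}_n\otimes\mathbf{1}_{\calN}$ is used essentially here: it is what collapses the matrix of $\calN$-valued coefficients appearing in $\tau_{\calM}(a^*u^*au)$ into a scalar bilinear form in the $\lambda_i$'s weighted only by the scalars $p_{ik}$.
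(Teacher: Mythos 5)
Your proof is correct and follows essentially the same route as the paper: reduce to optimizing the linear functional $\sum_{i,k}\Re(\overline{\lambda_i}\lambda_k)\,\tau_{\calN}(u_{ki}^*u_{ki})$, check that the matrix $(\tau_{\calN}(u_{ki}^*u_{ki}))_{i,k}$ is doubly stochastic via unitarity and traciality, and conclude by extremality of permutation matrices in the Birkhoff polytope (the paper cites the lemma of Hoffman--Wielandt for this, which is the same Birkhoff--von Neumann fact), noting that permutation unitaries $u_\sigma\otimes\mathbf{1}_{\calN}$ realize those vertices.
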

\begin{proof}
    Write $a = \Diag(a_{i})_{i=1}^n\otimes\textbf{1}_{\calN}$ with $a_i\in \CC$ and let $u=(u_{ij})_{i,j=1}^n\in \calU(\calM),\ u_{ij}\in\calN,\ i,j=1,\dots,n$.
    We note that $$\|a -uau^*\|_2^2 = \tau_{\calM}((a - uau^*)(a^*-ua^*u)) = 2\tau_{\calM}(|a|^2) - 2\Re(\tau_{\calM}(aua^*u^*)).$$
    We are interested in finding a unitary element $u\in \calM$ for which the scalar $$R(u) := -\Re(\tau_{\calM}(aua^*u^*)) = -\frac{1}{n}\sum_{i,j} \Re(\tau_{\calN}(a_{i}u_{ij}\overline{a_{j}}u_{ij}^*)) =  -\frac{1}{n}\sum_{i,j} \Re(a_{i}\overline{a_{j}})\tau_{\calN}(u_{ij}u_{ij}^*)$$ attains its maximum. For convenience,  let $(d_{ij})\in \mathbb{M}_n$ be the matrix given by $d_{ij} = -\frac{1}{n}\Re(a_{i}\overline{a_{j}})$, so that $R(u) = \sum_{ij}d_{ij}\tau_{\calN}(u_{ij}u_{ij}^*)$.
    Denote
    $\calW_n = \{(\tau_{\calN}(v_{ij}v_{ij}^*))_{ij} \in \mathbb{M}_n:\ v=(v_{ij})\in \calU(\Mat_n(\calN))\}$.
    We observe for $w = (\tau_{\calN}(v_{ij}v_{ij}^*))_{ij}\in \calW_{n}$  and every $j$ such that $1\leq j\leq n$, we have $\sum_{i}w_{ij} = \tau_{\calN}(\sum_i v_{ij}v_{ij}^*) = \tau_{\calN}(\mathbf{1}_{\calN}) = 1$. Similarly, for every $1\leq i\leq n$ we have that $\sum_{j}w_{ij} = \tau_{\calN}(\sum_j v_{ij}v_{ij}^*) = \tau_{\calN}(\mathbf{1}_{\calN}) = 1$. Furthermore, as $v_{ij}v_{ij}^*\geq 0$ in $\calN$, it is clear that $w_{ij}\geq 0$ for all $i,j$.
    Now, denote by $\calX_n$ the set of all elements $x = (x_{ij})\in \mathbb{M}_n$ satisfying
    \begin{align*}
        \forall j: \sum_{i}x_{ij} =1, \quad \forall i: \sum_{j}x_{ij} = 1, \quad \forall i,j: x_{ij}\geq 0 \quad
    \end{align*}
    so that $\calW_n\subseteq \calX_n$.  Considering $\calX_n$ as a subset of $\RR^{n^2}$, we see that $\calX_n$ defines a closed convex polytope. By \cite[Lemma]{HoWi}, the vertices of $\calX_n$ are the permutation matrices. Hence the maximum of the linear form $(x_{ij})\to \sum_{ij}d_{ij}x_{ij}$ on $\calX_n$ is attained for some permutation matrix $\widetilde{u} = (\widetilde{u}_{ij})\in \calU_n^{per}$. As $\widetilde{u}\in \calU_{n}^{per}\subseteq \Mat_n(\calN)$ we have that $\tau_{\calN}(\widetilde{u}_{ij}\widetilde{u}_{ij}^*) = \widetilde{u}_{ij}$ and so
    $$R(\widetilde{u}) = \sum_{i,j}d_{ij}\tau_{\calN}(\widetilde{u}_{ij}\widetilde{u}_{ij}^*) =
    \sum_{i,j}d_{ij}\widetilde{u}_{ij} = \max_{x\in \calX_{n}}\sum_{i,j}d_{ij}x_{ij}\geq \sup_{w\in \calW_n}\sum_{i,j}d_{ij}w_{ij}= \sup_{\substack{u\in \calU(\calM)}}R(u).$$
Thus,  $\sup_{\substack{u\in \calU(\calM)}}\|a - u^*au\|_{2} \leq \|a - (\widetilde{u}\otimes\textbf{1}_\calN)^*a(\widetilde{u}\otimes\textbf{1}_\calN)\|_{2}$ and the claim follows.	
\end{proof}

Combining \cref{I_n} and \cref{t_211}, we estimate  the maximal constant $C_{\calM}$ that satisfies the commutator estimate \eqref{bs_t1_2_2} for finite factors $\calM$ in  \cref{t3} below. For the definitions of the constants  $\Lambda_n$ and $\widetilde{\Lambda_n}$ we refer to \eqref{eq:definition_lambda_n} and \eqref{eq:definition-tilde-lambda_n}  and for the exact values of $\Lambda_n$ we refer to \cref{t_techmain}.

\begin{theorem}\label{t3}
	Let $\calM$ be a finite factor with $\calM\not=\CC$. Then there is a constant $C>0$ with the property that:
	\begin{itemize}
		
	\item [($*$)] For any normal $a\in S(\calM)$ there exists a complex number $z_0\in \CC$ and unitaries $u,v,w\in \calU(\calM)$ such that \begin{align}\label{eq:optimal-constant-commutator}|[a,u]| \geq C(v|a-z_0\mathbf{1}|v^* + w|a-z_0\mathbf{1}|w^*).\end{align}
	
\end{itemize}
Moreover, a maximal constant $C_{\calM}$ with this property exists and it satisfies $\Lambda_n\leq C_{\calM}\leq \frac{1}{2}\widetilde{\Lambda}_n$ when $\calM$ is a I$_n$-factor ($1<n<\infty$), and $C_{\calM}$ equals $\frac{1}{2}\sqrt{3}$ when $\calM$ is a II$_1$-factor.
\end{theorem}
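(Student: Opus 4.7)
My plan is to derive the lower bounds from the results already established in this section, and to obtain the upper bounds by applying the tracial state to \eqref{eq:optimal-constant-commutator}, reducing the problem to scalar $L_1$ estimates. For the $\mathrm{I}_n$-factor lower bound, I observe that although \cref{I_n} is stated with the constant $\tfrac{\sqrt{3}}{2}$, its proof actually produces $|[a,u]| \geq \Lambda(g)(|a-z_0\mathbf{1}| + u|a-z_0\mathbf{1}|u^*)$, where $g \in S(\Omega_n)$ is the spectral symbol of $a$ (via diagonalisation $a = U^*M_gU$) and the pair $(T,z_0)$ realising $\Lambda(g,T,z_0) = \Lambda(g)$ comes from the final clause of \cref{theorem-transformation-with-bound}; taking $v = \mathbf{1}$, $w = u$ in \eqref{eq:optimal-constant-commutator} and using $\Lambda(g) \geq \Lambda_n$ gives $C_\calM \geq \Lambda_n$. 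For the $\mathrm{II}_1$-factor lower bound, \cref{t_211} produces a unitary $w$ with $w|[a,u]|w^* \geq \tfrac{\sqrt{3}}{2}(|a-z_0\mathbf{1}|+u|a-z_0\mathbf{1}|u)$; conjugating both sides by $w^*$ (and using $u=u^*$) rewrites this in the form \eqref{eq:optimal-constant-commutator} with unitaries $u$, $w^*$ and $w^*u$, so $C_\calM \geq \tfrac{\sqrt{3}}{2}$.

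For the upper bounds, applying $\tau$ to \eqref{eq:optimal-constant-commutator} and using the unitary invariance of the trace yields
\[
	\|a - uau^*\|_1 \;=\; \tau(|[a,u]|) \;\geq\; 2C\|a - z_0\mathbf{1}\|_1 \;\geq\; 2C\min_{z\in\CC}\|a-z\mathbf{1}\|_1,
\]
so any valid $C$ must satisfy $2C \leq \sup_{u\in\calU(\calM)}\|a-uau^*\|_1 / \min_{z\in\CC}\|a-z\mathbf{1}\|_1$ for \emph{every} normal $a\in\calM$. It therefore suffices to exhibit one $a$ making the right-hand ratio equal to $\widetilde{\Lambda}_n$, respectively $\sqrt{3}$. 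In the $\mathrm{I}_n$-factor case I would take the $g \in L_\infty(\Omega_n)$ furnished by \cref{lemma:optimality-triangle-function}, so that $\Diam(g(\Omega_n)) = 1$ and $\min_z\|g-z\|_1 = 1/\widetilde{\Lambda}_n$, and set $a := M_g$; after diagonalising $a$ and using $\|\cdot\|_1 \leq \|\cdot\|_2$ (valid in a tracial probability space), \cref{prop:restrict-to-permutation-matrix} applied with $\calN = \CC$ reduces $\sup_u\|a-uau^*\|_2$ to a maximum over permutation matrices $u_\sigma$, where the trivial bound $\|a-u_\sigma a u_\sigma^*\|_2^2 = \tfrac{1}{n}\sum_i |a_i - a_{\sigma^{-1}(i)}|^2 \leq \Diam(\sigma(a))^2 = 1$ yields $\sup_u\|a-uau^*\|_1 \leq 1$ and hence $2C \leq \widetilde{\Lambda}_n$.

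For the $\mathrm{II}_1$-factor I would instead take three orthogonal projections $P_1,P_2,P_3$ of trace $1/3$ summing to $\mathbf{1}$ (which exist since $\calM$ is diffuse), together with $v_1,v_2,v_3\in\CC$ forming an equilateral triangle of side $1$, and set $a := v_1P_1 + v_2P_2 + v_3P_3$. Then $\min_z\|a-z\mathbf{1}\|_1 = 1/\sqrt{3}$, attained at the centroid (the geometric median of three equal masses at equilateral-triangle vertices); via the standard identification $\calM \cong \mathbb{M}_3 \otimes (P_1\calM P_1)$ the element $a$ lies in $\mathbb{D}_3 \otimes \mathbf{1}$, so the same use of \cref{prop:restrict-to-permutation-matrix} and $\|\cdot\|_1 \leq \|\cdot\|_2$ yields $\sup_u\|a-uau^*\|_1 \leq 1$, forcing $2C \leq \sqrt{3}$. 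Combined with the lower bound this gives $C_\calM = \tfrac{\sqrt{3}}{2}$.

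The main technical nuisance I anticipate is verifying that the supremum defining $C_\calM$ is actually attained. I would settle this by a standard compactness argument: $z_0$ may be restricted to a compact disc determined by $\|a\|$ (reducing to bounded $a$ in the $\mathrm{II}_1$ case by measure-topology approximation), the unitary group of $\calM$ is compact in the weak operator topology on bounded sets, and the operator-valued maps $u\mapsto |[a,u]|$ and $u\mapsto u|a-z_0\mathbf{1}|u^*$ are continuous in the relevant topologies, so that limits of inequalities with constants $C_k \uparrow C_\calM$ produce an inequality with constant $C_\calM$. In the $\mathrm{II}_1$ case attainment is immediate, since the construction of \cref{t_211} exhibits tuples realising the constant $\tfrac{\sqrt{3}}{2} = C_\calM$ for every normal $a$.
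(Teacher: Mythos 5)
Your proposal is correct and follows essentially the same route as the paper: lower bounds from \cref{I_n} and \cref{t_211} (sharpened to $\Lambda_n$ via the final clause of \cref{theorem-transformation-with-bound}), upper bounds by applying the trace to \eqref{eq:optimal-constant-commutator} for the extremal element coming from \cref{lemma:optimality-triangle-function} together with \cref{prop:restrict-to-permutation-matrix} and $\|\cdot\|_1\leq\|\cdot\|_2$, and a finite-dimensional compactness/limit argument for attainment of $C_{\calM}$ in the I$_n$ case (which is where such an argument is valid), with attainment in the II$_1$ case immediate from the matching bounds. Your II$_1$ test element (three trace-$\frac{1}{3}$ projections with equilateral-triangle eigenvalues) is, after the identification $\calM\cong \Mat_3(P_1\calM P_1)$, exactly the paper's $\Diag(g(1),g(2),g(3))\otimes\mathbf{1}_{\calN}$, so this is not a genuinely different argument.
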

\begin{proof}
	Combining \cref{I_n} and \cref{t_211} we obtain for any finite factor that the constant $C=\frac{1}{2}\sqrt{3}$ is admissable for ($*$). By \cref{t_techmain} we have that $\Lambda_n =\frac{1}{2}\sqrt{3}$ when $n=3$ or $5\leq n\leq \infty$. Let $n<\infty$. To see that $C=\Lambda_n$ is admissible for all $n$ we note that by \cref{theorem-transformation-with-bound} we have for $g\in S(\Omega_n)$ that there exist $z_0\in \CC$, $T\in \Aut_n$ such that $\Lambda(g,T,z_0)=\Lambda_n(g)\geq \Lambda_n$, which means
	\begin{align}\label{eq:constant-C_M-function-inequality}
		|g\circ T - g| \geq \Lambda_n(|g-z_0| + |g\circ T-z_0|).
	\end{align} Repeating the proof of \cref{I_n}, replacing \eqref{eq:I_n-function-inequality} with \eqref{eq:constant-C_M-function-inequality}, we obtain that  $C=\Lambda_n$ is also an admissible constant for ($*$).
	We will later see that the maximal admissible constant $C_{\calM}$ actually exists. First we prove upper bounds on constants $C$ satisfying ($*$) for $\calM$. Let $\tau$ be a tracial state on $\calM$. 
	
	Let $\calM$ be a I$_n$-factor with $1<n<\infty$. Let  $g\in S(\Omega_n)$ be the the function from \cref{lemma:optimality-triangle-function} and let $a = \Diag(g(1),...g(n))\in \calM$. Let $z_0\in \CC$, $u,v,w\in \calU(\calM)$ such that ($*$) is satisfied for $a$ with constant $C$. It follows from  \cref{prop:restrict-to-permutation-matrix}  ($\calN=\CC$) that
$$\|[a,u]\|_1\leq\|[a,u]\|_2=\|a - u^*au\|_2\leq \max_{u_0 \in \calU_n^{per}}\|a - u_0^*au_0\|_2\leq\Diam(\sigma(a)).$$ 
Hence,
$$2C\|a-z_0\textbf{1}\|_1= C\|v|a-z_0\textbf{1}|v^*+w|a-z_0\textbf{1}|w^*\|_1\leq\|[a,u]\|_1\leq\Diam(\sigma(a)).$$
Now, choosing $g$ as in the assertion of \cref{lemma:optimality-triangle-function} we obtain
$$1\geq \Diam(\sigma(a))\geq 2C\|a-z_0\textbf{1}\|_1\geq 2C\|g-z_0\|_1\geq 2C\widetilde{\Lambda}_n^{-1}.$$
 Hence, $C\leq \frac{1}{2}\widetilde{\Lambda}_{n}$.

Let $\calM$ be of type II$_1$. Then $\calM\cong \Mat_3(\CC)\otimes\calN$  for some II$_1$-factor $\calN$. Let the function $g\in S(\Omega_3)$ be as in \cref{lemma:optimality-triangle-function} and let $a_1 =\Diag(g(1),g(2),g(3))\in \Mat_3(\CC)$ and $a=a_1\otimes\textbf{1}_\calN\in \calM$. Let $z_0\in \CC$, $u,v,w\in \calU(\calM)$ be such that ($*$) holds for $a$ with constant $C$.
We have 
$$\|[a,u]\|_1\leq\|[a,u]\|_2=\|a - u^*au\|_2\leq \max_{u_0 \in \calU_3^{per}\otimes\textbf{1}_\calN}\|a - u_0^*au_0\|_2=\max_{u_0 \in \calU_3^{per}}\|a_1 - u_0^*a_1u_0\|_2\leq\Diam(\sigma(a_1)).$$
Hence, $$2C\|a_1-z_0\textbf{1}\|_1=2C\|a-z_0\textbf{1}\|_1= C\|v|a-z_0\textbf{1}|v^*+w|a-z_0\textbf{1}|w^*\|_1\leq\|[a,u]\|_1\leq\Diam(\sigma(a_1))\leq 1.$$
It follows from \cref{lemma:optimality-triangle-function} that
$$1\geq \Diam(\sigma(a_1))\geq 2C\|g-z_0\|_1\geq 2C\widetilde{\Lambda}_3^{-1}.$$
Hence, $C\leq \frac{1}{2}\widetilde{\Lambda}_{3}=\frac{\sqrt{3}}{2}$. For $\calM$ a II$_1$-factor, this shows that in fact $C_{\calM}$ exists and that $C_{\calM} =\frac{1}{2}\sqrt{3}$. 

We now show that the maximal constant $C_{\calM}$ also exists when $\calM$ is a I$_n$-factor ($1<n<\infty$). 	Let $(C_i)_{i\geq 1}$ be an increasing sequence of positive constants admissible for ($*$) and set $C=\sup C_i\leq \frac{1}{2}\widetilde{\Lambda}_n$. For a normal $a\in \calM$ there exists corresponding $u_i\in \calU(\calM)$ and $z_{0,i}\in \CC$ such that the equation \eqref{eq:constant-C_M-function-inequality} holds with the constant $C_i$.
Now by  
	$$2\|a\|_1\geq \|[a,u_i]\|_1 \geq 2C_i\|a-z_{0,i}\mathbf{1}\|_1 \geq 2C_i(|z_{0,i}| - \|a\|_1)$$ 
	we obtain $|z_{0,i}|\leq \frac{1+C_i}{C_i}\|a\|_1\leq \frac{1+C_1}{C_1}\|a\|_1$. Therefore, as the sequences $(u_i)_{i}$ and $(z_{0,i})_{i}$ are bounded and as $\calM$ is finite-dimensional, we can assume these sequences converge in norm to some $u\in \calU(\calM)$ and some $z_{0}\in\CC$ (otherwise restrict to a subsequence). Now the elements
$d_i:= |[a,u_i]|-C_i(|a-z_{0,i}\mathbf{1}| + u_i|a-z_{0,i}\mathbf{1}|u_i^*)$ are all positive and converge to $d= |[a,u]|-C(|a-z_0\mathbf{1}| + u|a-z_0\mathbf{1}|u^*)$. As the cone of positive elements in $\calM$ is closed in the norm, we obtain $d\geq 0$. This shows that $|[a,u]|\geq C(|a-z_0\mathbf{1}| + u|a-z_0\mathbf{1}|u^*)$ holds, and therefore $C$ is admissible for ($*$) as well. Hence, the supremum of all admissible constants (which is finite), is again admissible, and this shows that $C_{\calM}$ exists. It now follows that $\Lambda_n \leq C_{\calM}\leq \frac{1}{2}\widetilde{\Lambda}_{n}$

\end{proof}

\section{Commutator estimates for normal operators in infinite factors}\label{section:estimates-in-infinite-factors}

We shall now obtain the commutator estimate \eqref{bs_t1_2_2} for normal elements in an infinite factor. We show in \cref{t_inf} that for such factors the constant $C$ in this estimate can be chosen arbitrary close to $1$. For infinite factors, this extends the result of \cite[Theorem B.1]{BBS}  to normal elements. The proof of \cref{t_inf} extensively uses the geometry of projections. Before we start its proof, we state and prove three short technical lemmas. 
%\cref{l1_inf} and \cref{l2_inf} concern projections. \cref{l3_inf} concerns an operator estimate.
\begin{lemma}\label{l1_inf}
    Let $\calM$ be an infinite factor and $p$ be a infinite projection from $\calM$. If $p_1,\dots,p_n\in \calP(\calM)$ are pairwise commuting and $p_1,\dots,p_n\prec p$, then $p_1\vee\dots\vee p_n\prec p$.
    (We understand the symbol ``$\prec$'' as ``$\precnsim$''.)
\end{lemma}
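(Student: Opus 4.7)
I would proceed by induction on $n$, reducing the general claim to the elementary case of two orthogonal projections by exploiting the pairwise commutation, and then verifying that case by a direct dimension or trace argument based on the type of the factor.

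The base case $n=1$ is trivial. For the inductive step, set $q := p_1 \vee \cdots \vee p_{n-1}$; by the inductive hypothesis, $q \prec p$. Because $p_n$ commutes with each $p_i$ for $i<n$, it commutes with the abelian von Neumann subalgebra they generate, and hence with $q$. Consequently $1-q$ commutes with $p_n$, so $e := p_n(1-q)$ is a projection, orthogonal to $q$, with $q \vee p_n = q + e$. Since $e \leq p_n \precsim p$ we have $e \precsim p$; if moreover $e \sim p$, then $p \precsim p_n$, which combined with $p_n \precsim p$ forces $p_n \sim p$ (Cantor--Bernstein for projections in a factor), contradicting $p_n \prec p$. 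Thus $e \prec p$, and the inductive step reduces to the following subclaim.

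\emph{Subclaim.} If $e_1,e_2 \in \calP(\calM)$ are orthogonal projections with $e_1,e_2 \prec p$, then $e_1 + e_2 \prec p$.

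I would establish this by a case analysis on the type of the infinite factor $\calM$ (necessarily $I_\infty$, $II_\infty$, or $III$), invoking the standard classification of Murray--von Neumann equivalence of projections in each case. In type $I_\infty$, equivalence classes of projections correspond to cardinal dimensions; since $\dim p$ is an infinite cardinal and $\dim e_i < \dim p$ for $i=1,2$, cardinal arithmetic yields $\dim(e_1+e_2) = \dim e_1 + \dim e_2 < \dim p$, so $e_1+e_2 \prec p$. In type $II_\infty$, with semifinite trace $\tau$, the condition $e_i \prec p$ and $\tau(p)=\infty$ forces $\tau(e_i)<\infty$, whence $\tau(e_1+e_2) = \tau(e_1)+\tau(e_2) < \infty = \tau(p)$. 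In the ($\sigma$-finite) type $III$ case every nonzero projection is equivalent to $\mathbf{1}$, so $e_i \prec p$ forces $e_i=0$ and the conclusion is vacuous.

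The main obstacle I anticipate is the lack of a single unified argument: proper infiniteness of $p$ gives $e_1+e_2 \precsim p$ easily (decompose $p = p'+p''$ with $p'\sim p''\sim p$ and embed $e_1 \precsim p'$, $e_2 \precsim p''$), but strictness in the conclusion appears to require knowing something structural about the class of $p$, which is most cleanly handled via the type decomposition above. Stringing together the induction with this orthogonal two-projection case finishes the proof.
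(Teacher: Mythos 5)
Your reduction to two orthogonal projections is fine and essentially parallels the paper's own argument: the paper disjointifies directly, setting $q_1=p_1$ and $q_{k+1}=p_{k+1}(\mathbf{1}-q_1-\cdots-q_k)$, and then invokes \cite[Lemma 2(ii)]{BS2012} for the statement that a finite orthogonal sum of projections each $\prec p$ is again $\prec p$. The genuine gap is in your proof of that subclaim. The lemma is stated (and later used in \cref{t_inf}) for an \emph{arbitrary} infinite factor, with no countable decomposability assumption, and your type-by-type verification only works in the $\sigma$-finite case. In type $II_\infty$ the assertion ``$e_i\prec p$ and $\tau(p)=\infty$ forces $\tau(e_i)<\infty$'' is false for non-$\sigma$-finite factors: take $\calM=B(H)\overline{\otimes}R$ with $H$ nonseparable and $R$ the hyperfinite II$_1$-factor, $p=\mathbf{1}$, and $e=q\otimes\mathbf{1}$ with $q$ of countably infinite rank; then $\tau(e)=\infty$, yet $e\not\sim\mathbf{1}$ because $e\calM e$ is $\sigma$-finite while $\calM$ is not, so $e\prec p$ while having infinite trace. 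Similarly, your type III case is explicitly restricted to $\sigma$-finite factors; in a non-$\sigma$-finite type III factor there are nonzero projections strictly subordinate to $\mathbf{1}$ (any $\sigma$-finite one), so that case is neither vacuous nor covered. As written, the proposal therefore proves the lemma only for countably decomposable factors, which is strictly less than what the statement (and its application) requires.

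The hole can be closed without any type analysis, which is also closer in spirit to what \cite[Lemma 2(ii)]{BS2012} provides. Given orthogonal $e_1,e_2\prec p$, use comparability in a factor to assume $e_1\precsim e_2$. If $e_2$ is finite, then $e_1+e_2$ is finite (the sum of two orthogonal finite projections is finite), hence $e_1+e_2\not\sim p$, and $p\precsim e_1+e_2$ is impossible since $p$ is infinite, so $e_1+e_2\prec p$. If $e_2$ is infinite, it is properly infinite (we are in a factor), so $e_2=e_2'+e_2''$ with $e_2'\sim e_2''\sim e_2$ orthogonal, whence $e_1+e_2\precsim e_2'+e_2''=e_2\precsim p$; if $e_1+e_2\sim p$ then $p\precsim e_2$, and Schr\"{o}der--Bernstein gives $e_2\sim p$, contradicting $e_2\prec p$. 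With this uniform argument (or simply by citing \cite[Lemma 2(ii)]{BS2012}, as the paper does) your induction scheme goes through in full generality; without it, the $II_\infty$ and III cases of your subclaim are incomplete, and the $II_\infty$ step as stated is actually false.
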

\begin{proof} Let $q_1=p_1$ and $q_{k+1}=p_{k+1}(\textbf{1}-q_1-\dots q_k)$ for  $k=1,\dots,n-1$.
    Then $q_iq_j=0$ for $i\neq j$, $q_k\prec p$ for $k=1,\dots,n$ and $p_1\vee\dots\vee p_n=q_1+\dots+q_n\prec p$ (see \cite[Lemma 2 (ii)]{BS2012}).
\end{proof}
\begin{lemma}\label{l2_inf}
    Let $\calM$ be a factor, $a$ be a normal operator from $S(\calM)$, $p,q\in \calP(\calM),\ q\preceq p$. Suppose that one of the
    following conditions holds:

    (i). $q$ is finite and there exists a sequence of finite projections $(p_n)$ in $\calM$ such
    that $p_n\uparrow p$ and $[a,p_n] = 0$ for all $n\in\mathbb{N}$;

    (ii). $q$ is an infinite projection and $[a,p]=0$.

    Then there exists a projection $q_1\in\calM$ such that $q_1\sim q,\ [a,q_1] = 0$ and such that $q_1\leq p$.
\end{lemma}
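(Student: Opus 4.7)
The plan is to handle the two cases separately, as they have qualitatively different flavors.

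For case (ii), the idea is essentially a comparability argument. Since $q$ is infinite and $q \preceq p$, the projection $p$ must itself be infinite (any subprojection of a finite projection is finite, so $p$ finite would force $q$ finite). In a countably decomposable infinite factor, Murray--von Neumann equivalence of projections is determined by ``size'' -- namely, same dimension in type $I_\infty$, same (possibly infinite) trace in type $II_\infty$, and any non-zero projection suffices in type $III$ -- so two infinite projections with $q \preceq p$ are automatically equivalent. Hence $p \sim q$, and I may simply take $q_1 := p$, which commutes with $a$ by hypothesis and trivially satisfies $q_1 \leq p$. In a general (not necessarily countably decomposable) factor the same result is reached by decomposing $p = \sum_\alpha p e_{B_\alpha}(a)$ via a partition of $\sigma(a)$ into Borel pieces (each $p e_{B_\alpha}$ commutes with $a$ because $[a,p]=0$ propagates to spectral projections) and selecting a subfamily of total size equal to $q$.

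For case (i), $\calM$ must be semifinite since it contains a non-zero finite projection $q$; let $\tau$ be a faithful normal semifinite trace. By normality, $\tau(p_n) \uparrow \tau(p) \geq \tau(q)$. I first dispose of the degenerate subcase $\tau(p_n) < \tau(q)$ for every $n$: this forces $\tau(p) = \tau(q) < \infty$, so $p$ itself is finite. Since each spectral projection of $a$ commutes with every $p_n$, and commutation with a bounded operator is preserved under the strong convergence $p_n \uparrow p$, all spectral projections of $a$ commute with $p$ as well, giving $[a,p]=0$; then $q_1 := p$ works since $\tau(p) = \tau(q)$ implies $p \sim q$ in the factor.

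In the main subcase, $\tau(q) \leq \tau(p_n)$ for some $n$, and I work inside the finite factor $\calN := p_n \calM p_n$, in which $a_n := a p_n = p_n a$ is a normal operator of $S(\calN)$. The target $q_1 \leq p_n$ will be built from spectral data of $a_n$: consider the scalar measure $\mu(B) := \tau(e_B(a_n))$, a Borel measure on $\CC$ of total mass $\tau(p_n)$, and note that $e_B(a_n)$ commutes with $a_n$ (hence with $a$) for every Borel $B$. Moreover, for any atom $\{\lambda\}$ of $\mu$, the spectral projection $f := e_{\{\lambda\}}(a_n)$ satisfies $a_n f = \lambda f$, so any subprojection of $f$ inside $\calN$ also commutes with $a_n$. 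Combining the non-atomic part of $\mu$ (freely sliceable to any measure in type $II_1$, absent in type $I_k$) with appropriate subprojections of atoms, I can realize any target trace in $[0, \tau(p_n)]$ for type $II_1$ or in the corresponding discrete set for type $I_k$; in particular the value $\tau(q)$ is attainable. The resulting $q_1$ satisfies $q_1 \leq p_n \leq p$, $[a,q_1]=0$, and $\tau(q_1) = \tau(q)$, hence $q_1 \sim q$. The principal technical obstacle lies in this last realization step: carefully handling atomic parts of the spectral measure of $a_n$ to attain the prescribed trace $\tau(q)$ exactly.
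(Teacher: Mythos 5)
The paper itself offers no written proof here (it is deferred to the self-adjoint analogue, \cite[Lemma 3]{BS2012}), so your argument has to stand on its own. Case (i) does: after the degenerate subcase (where your strong-limit argument correctly upgrades $[a,p_n]=0$ to $[a,p]=0$ and equal finite traces give $p\sim q$), you fix one finite $p_n$ with $\tau(p_n)\geq\tau(q)$, pass to the finite factor $p_n\calM p_n$, and realize the exact value $\tau(q)$ by combining spectral projections of $ap_n$ taken over the atomless part of the measure $B\mapsto\tau(\chi_B(ap_n))$ with subprojections of eigenprojections, on which $ap_n$ is a scalar so that commutation with $a$ is automatic; equal finite trace then yields $q_1\sim q$ in the factor. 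This trace-based route is a perfectly legitimate (and arguably more concrete) alternative to the spectral-scale argument of the cited lemma.

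The genuine gap is case (ii) for factors that are not countably decomposable. For $\sigma$-finite factors your reduction is correct: all infinite projections are equivalent there, so $q_1=p$ works. But the lemma is stated, and is used in the proof of \cref{t_inf}, for arbitrary infinite factors, and in general $q\preceq p$ with both infinite does not give $q\sim p$ (a rank-$\aleph_0$ projection in $B(\ell_2(\omega_1))$ is infinite but not equivalent to $\mathbf{1}$). Your proposed repair --- partition $\sigma(a)$ into Borel pieces and ``select a subfamily of total size equal to $q$'' --- is not an argument: nothing guarantees that sums of the pieces $p\chi_{B_\alpha}(a)$ realize the equivalence class of $q$. For instance, if $ap=\lambda p$ the only available pieces are $0$ and $p$, while $q$ may be strictly smaller; in general one must also cut inside eigenspaces (where any subprojection commutes with $a$) and run an exhaustion or comparison argument over the spectral data to hit the class of $q$ exactly, which in the non-$\sigma$-finite setting requires real work (this is exactly where the cited lemma earns its keep). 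As written, your case (ii) is established only under countable decomposability, so the lemma in the generality the paper needs is not proved.
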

\begin{proof}
    The proof follows along the lines  of \cite[Lemma 3]{BS2012} and is therefore omitted.
\end{proof}

\begin{lemma}\label{l3_inf}
    Let $\calM$ be a von Neumann algebra, $a,b\in LS(\calM)$, $\alpha_1,\alpha_2>0$, and
    $$|a|\geq \alpha_1\textbf{1},\ ,\ 2\alpha_2<\alpha_1,\ \alpha_2\textbf{1}\geq |b|.$$
    Then there exists  $v\in \calU(\calM)$ such that
    $$v|a-b|v^*\geq(1-\frac{2\alpha_2}{\alpha_1})|a|+|b|.$$
\end{lemma}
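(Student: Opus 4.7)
The plan is to deduce the lemma from the reverse triangle inequality implicit in \cref{t_AAP_21}. Writing $a=(a-b)+b$ and applying \cref{t_AAP_21} to this decomposition, I would obtain isometries $v,w\in\calM$ such that
\[|a|\leq v|a-b|v^*+w|b|w^*.\]
Since $|b|\leq\alpha_2\mathbf{1}$ and $ww^*\leq\mathbf{1}$, the second summand satisfies $w|b|w^*\leq\alpha_2 ww^*\leq\alpha_2\mathbf{1}$, so rearranging immediately gives the preliminary bound
\[v|a-b|v^*\geq |a|-\alpha_2\mathbf{1}.\]

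The desired inequality would then follow from the elementary operator estimate
\[|a|-\alpha_2\mathbf{1}\geq \left(1-\tfrac{2\alpha_2}{\alpha_1}\right)|a|+|b|,\]
which is equivalent to $\tfrac{2\alpha_2}{\alpha_1}|a|\geq \alpha_2\mathbf{1}+|b|$. This in turn follows from the two-step chain $\tfrac{2\alpha_2}{\alpha_1}|a|\geq 2\alpha_2\mathbf{1}\geq \alpha_2\mathbf{1}+|b|$, obtained from $|a|\geq\alpha_1\mathbf{1}$ and $|b|\leq\alpha_2\mathbf{1}$ respectively.

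The step I expect to be the main obstacle is that \cref{t_AAP_21} only produces an isometry $v$, whereas the lemma requires $v\in\calU(\calM)$. In a general (possibly infinite) von Neumann algebra these are genuinely different, and the inequality would be meaningless on $(\mathbf{1}-vv^*)H$ if the conclusion held only for a proper isometry. Fortunately, the strict gap $2\alpha_2<\alpha_1$ will close this: sandwiching the inequality $|a|\leq v|a-b|v^*+w|b|w^*$ between the projection $\mathbf{1}-vv^*$ on both sides kills the first summand on the right because $(\mathbf{1}-vv^*)v=0$, leaving
\[\alpha_1(\mathbf{1}-vv^*)\leq (\mathbf{1}-vv^*)|a|(\mathbf{1}-vv^*)\leq \alpha_2(\mathbf{1}-vv^*).\]
Since $\alpha_1>\alpha_2$ this forces $\mathbf{1}-vv^*=0$, and thus $v\in\calU(\calM)$. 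Combining the three bounds completes the proof.
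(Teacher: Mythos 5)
Your proposal is correct and follows essentially the same route as the paper: apply \cref{t_AAP_21} to the decomposition $a=(a-b)+b$, chain the bounds $|b|\leq\alpha_2\mathbf{1}$ and $\alpha_2\mathbf{1}\leq\frac{\alpha_2}{\alpha_1}|a|$ to get the displayed inequality, and then compress by the projection $\mathbf{1}-vv^*$ to exploit the gap $2\alpha_2<\alpha_1$ and conclude $vv^*=\mathbf{1}$. The only cosmetic difference is that the paper compresses the final inequality $v|a-b|v^*\geq(\alpha_1-2\alpha_2)\mathbf{1}$ rather than the original triangle inequality, which yields the same conclusion.
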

\begin{proof}
    Let $a,b\in LS(\calM)$, $\alpha_1,\alpha_2>0$ satisfy the assumption of the lemma. By \cref{t_AAP_21}, we have that
    $$|a|\leq v|a-b|v^*+w|b|w^*$$
    for some $v,w\in\calM$ with  $v^*v=w^*w=\textbf{1}$.
    Then
    \begin{align*}
        v|a-b|v^*&\geq|a|-w|b|w^*\geq |a|-\alpha_2ww^*\geq |a|-\alpha_2\textbf{1}\\
        &\geq |a|+|b|-2\alpha_2\textbf{1}\geq |a|+|b| - \frac{2\alpha_2}{\alpha_1}|a|=(1-\frac{2\alpha_2}{\alpha_1})|a|+|b|.
    \end{align*}
    Since $v|a-b|v^*\geq (1-\frac{2\alpha_2}{\alpha_1})|a|\geq(\alpha_1-2\alpha_2)\textbf{1}$, it follows
    $$0=(\textbf{1}-vv^*)v|a-b|v^*(\textbf{1}-vv^*)\geq(\alpha_1-2\alpha_2)(\textbf{1}-vv^*)\geq 0.$$
    Therefore, we have $\textbf{1}-vv^*=0$, i.e. $v\in \calU(\calM)$.
\end{proof}

%We now prove the main result of this section which establishes the commutator estimate for normal elements in infinite factors.

\begin{theorem}\label{t_inf}
    Let $\calM$ be an infinite factor, and let $a\in S(\calM)$ be normal. There is a $\lambda_0\in\mathbb{C}$ such that for any $\varepsilon>0$ there exist $u_\varepsilon=u_\varepsilon^*\in \calU(\calM)$, $w_\varepsilon\in \calU(\calM)$ so that
    \begin{align}\label{t_inf_1}
        w_\varepsilon|[a,u_\varepsilon]|w_\varepsilon^*\geq (1-\varepsilon)(|a-\lambda_0\textbf{1}|+u_\varepsilon|a-\lambda_0\textbf{1}|u_\varepsilon).
    \end{align}
\end{theorem}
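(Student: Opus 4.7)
The plan is to extend the approach of \cref{t_211} to the infinite factor setting by exploiting two crucial advantages: the ability to choose $\lambda_0$ at a point where the spectrum of $a$ accumulates in the ``infinite-projection'' sense, and the flexibility to pair spectral projections lying far from $\lambda_0$ with equivalent sub-projections lying arbitrarily close to $\lambda_0$. The resulting asymmetry on each pair (``one side large, one side small'') is precisely the setup for \cref{l3_inf}.

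First, choose $\lambda_0 \in \CC$ such that $\chi_{\{|z-\lambda_0|<\delta\}}(a)$ is an infinite projection in $\calM$ for every $\delta > 0$. Such $\lambda_0$ exists: for bounded $a$, if it did not, one could cover $\sigma(a)$ by finitely many open disks of finite spectral projection, forcing $\mathbf{1}$ to be a finite sum of finite projections --- a contradiction; for general $a\in S(\calM)$ one truncates to a bounded spectral projection that is itself infinite. By translating, assume $\lambda_0 = 0$.

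Fix $\varepsilon > 0$ and pick $K \in \NN$ with $2^{1-K} < \varepsilon$. Consider the dyadic annular projections $P_n := \chi_{\{2^n \leq |z| < 2^{n+1}\}}(a)$ for $n \in \ZZ$ together with the kernel $Q := \chi_{\{0\}}(a)$. Fix a threshold $M \in \ZZ$ with $2^M < \varepsilon$; the ``outer'' indices are those with $n \geq M$. Processing outer $n$ in increasing order, apply \cref{l1_inf} and \cref{l2_inf} --- using the accumulation property of $\lambda_0$ to guarantee that enough infinite sub-projection of $Q + \sum_{l<n-K}P_l$ remains at every step --- to select pairwise disjoint projections $\widetilde{P}_n$ satisfying $\widetilde{P}_n \leq Q + \sum_{l<n-K}P_l$, $\widetilde{P}_n \sim P_n$, $[a,\widetilde{P}_n]=0$, and $\widetilde{P}_n$ disjoint from every $P_m$ with $m\geq M$. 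Let $v_n \in \calM$ be a partial isometry with $v_n^*v_n = P_n$ and $v_nv_n^* = \widetilde{P}_n$, and define
\begin{equation*}
	u_\varepsilon := \sum_{n \geq M}(v_n + v_n^*) + r,
\end{equation*}
where $r$ is the identity on the residual projection $R := \mathbf{1} - \sum_{n \geq M}(P_n + \widetilde{P}_n)$. By construction $R \leq \chi_{\{|z| < 2^M\}}(a)$, so $|a|R < \varepsilon R$.

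On each pair $\pi_n := P_n + \widetilde{P}_n$ with $n \geq M$, a direct computation using $[a, P_n] = 0 = [a, \widetilde{P}_n]$ gives $|[a, u_\varepsilon]|P_n = |b_n - a|P_n$, where $b_n := v_n^* a v_n \in P_n \calM P_n$ satisfies $|b_n| \leq 2^{n-K} P_n$ while $|a|P_n \geq 2^n P_n$; moreover, on $P_n H$ the operator $u_\varepsilon |a| u_\varepsilon$ coincides with $v_n^* |a| v_n = |b_n|$. Applying \cref{l3_inf} inside the factor $P_n \calM P_n$ with $\alpha_1 = 2^n$ and $\alpha_2 = 2^{n-K}$ (so $2\alpha_2/\alpha_1 = 2^{1-K} < \varepsilon$) produces a unitary $w_n^P \in \calU(P_n \calM P_n)$ satisfying $w_n^P |[a, u_\varepsilon]| (w_n^P)^* P_n \geq (1-\varepsilon)(|a| + u_\varepsilon |a| u_\varepsilon) P_n$. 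A symmetric argument on $\widetilde{P}_n$ provides $w_n^{\widetilde{P}}$. Packaging $w_\varepsilon := \sum_{n\geq M}(w_n^P + w_n^{\widetilde{P}}) + \mathbf{1}_R$ into a unitary of $\calM$ gives the estimate on the paired part; the residual contribution on the right-hand side is bounded by $2\varepsilon R$ and is absorbed by a further application of \cref{l3_inf}, comparing $R$ against the outermost pair, with overall loss of only $O(\varepsilon)$. The main obstacle is the iterative bookkeeping in the construction of the $\widetilde{P}_n$'s --- especially when some $P_n$ are themselves infinite --- which is where \cref{l1_inf} together with the accumulation-point property of $\lambda_0$ play essential roles.
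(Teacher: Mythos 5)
Your per-block use of \cref{l3_inf} on the pairs $(P_n,\widetilde P_n)$ is sound and mirrors the paper's mechanism, but the plan fails at the residual projection $R=\mathbf 1-\sum_{n\ge M}(P_n+\widetilde P_n)$, and the proposed ``absorption'' of it cannot work. By construction $R$ commutes with $a$ and with $u_\varepsilon$, and $u_\varepsilon$ acts as the identity on $RH$; hence $[a,u_\varepsilon]$ vanishes on $RH$ and is block diagonal with respect to $\{R,\;P_n+\widetilde P_n\}$, so $|[a,u_\varepsilon]|R=0$. On the other hand the right-hand side of \eqref{t_inf_1} restricted to $RH$ equals $2(1-\varepsilon)|a-\lambda_0\mathbf 1|R$, which is nonzero whenever $R\not\le e(\{\lambda_0\})$ --- and your construction generically leaves such an $R$ (the inner region $\{|z-\lambda_0|<2^M\}$ minus the allocated copies $\widetilde P_n$). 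An operator inequality $X\ge Y\ge 0$ forces $\ker X\subseteq\ker Y$; here $\ker\bigl(w_\varepsilon|[a,u_\varepsilon]|w_\varepsilon^*\bigr)\supseteq w_\varepsilon(RH)$ while $\ker\bigl(|a-\lambda_0\mathbf 1|+u_\varepsilon|a-\lambda_0\mathbf 1|u_\varepsilon\bigr)\subseteq e(\{\lambda_0\})H$, so no choice of $w_\varepsilon$ (block diagonal or not) can repair the defect unless $R\preceq e(\{\lambda_0\})$. The fact that the residual term is ``of size $O(\varepsilon)$ in norm'' is irrelevant: positivity cannot be transported between orthogonal blocks, and \cref{l3_inf} gives lower bounds for $|a-b|$, not a way to create positivity where the commutator is literally zero. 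This is exactly why the paper imposes its condition (iii): the pairs $(p_n,q_n)$ must exhaust $\mathbf 1-e(\{\lambda_0\})$ \emph{exactly}, with the inner radii $\gamma_n\downarrow 0$ chosen adaptively so that every spectral piece off $\lambda_0$, including pieces arbitrarily close to $\lambda_0$, is itself paired with a piece at relative distance $\tfrac\varepsilon2\gamma_n$; a fixed cutoff $2^M<\varepsilon$ cannot achieve this.

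Two further points where the real work is being skipped. First, choosing $\lambda_0$ so that every neighbourhood has \emph{infinite} spectral projection is weaker than the paper's densification condition $e(V)\sim\mathbf 1$; in a non-$\sigma$-finite factor an infinite projection need not dominate another infinite projection, so your pairing step $P_n\preceq$ (remaining inner projection) can fail when some $P_n$ is infinite. The dyadic-square argument in the paper in fact yields the stronger property $e(V)\sim\mathbf 1$, and this is what makes the comparisons go through. Second, producing the commuting copies $\widetilde P_n$ via \cref{l2_inf} requires, when the piece to be copied is finite, that the target projection be an increasing limit of finite $a$-commuting projections; verifying this after earlier allocations (and distinguishing it from the infinite case) is precisely the case analysis 3.1/3.2 in the paper, together with the separate treatment of the cases $e(\{\lambda_0\})\sim\mathbf 1$ and ``$\lambda_0$ a limit point of densification points''. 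You acknowledge this bookkeeping but do not supply it, and without it --- and without restructuring the construction so that nothing but $e(\{\lambda_0\})$ is left unpaired --- the argument does not establish \eqref{t_inf_1}.
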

\begin{proof}
Let $e(\cdot)$ be the spectral measure of $a$ on $\mathbb{C}$,  in particular, $e(X)=\chi_X(a)$ for any $X\in\calB(\mathbb{C})$. Since $a\in S(\calM)$ there exists a $R>0$ so that $e(X_R)$ is a finite projection, where $X_R=\{\lambda\in\mathbb{C}:\ |\lambda|>R\}$. Then $Y_R:=\mathbb{C}\setminus X_R$ is compact and it follows from Lemma \ref{l1_inf} that $e(Y_R)\sim\textbf{1}$.
	A point $\lambda\in\mathbb{C}$ will be called a \textit{point of densification} for $a$ if $e(V)\sim\textbf{1}$ for any neighborhood $V$ of a point $\lambda$. Denote by $A$ the set of all points of densification for $a$. 
    
    We claim that $A\neq\emptyset$. To see that the claim holds it is sufficient to show there exists a system of nested sets $B_n=[\alpha_n,\alpha_n+\frac{5R}{2^n})\times[\beta_n,\beta_n+\frac{5R}{2^n})$, with $e(B_n)\sim\textbf{1}$.
	We put $\alpha_1 = \beta_1 = -R$ so that clearly $Y_R\subset B_1$ and therefore $e(B_1)\sim\textbf{1}$. Now suppose $\alpha_1,\beta_1,\dots,\alpha_n,\beta_n$ are already constructed so that $e(B_1)\sim\dots\sim e(B_n)\sim\textbf{1}$. We can divide the rectangle $B_n$ into $4$ smaller rectangles by $$B_n=\bigcup_{k,l=0}^1 [\alpha_n+k\cdot\frac{5R}{2^{n+1}},\alpha_n+(k+1)\cdot\frac{5R}{2^{n+1}})\times[\beta_n+l\cdot\frac{5R}{2^{n+1}},\beta_n+(l+1)\cdot\frac{5R}{2^{n+1}}).$$ It follows from Lemma \ref{l1_inf} that one of the sets from this union can be taken for $B_{n+1}$ (which then defines $\alpha_{n+1},\beta_{n+1}$). This completes the induction. The point $\lambda:=(\sup_n \alpha_n) + (\sup_n\beta_n)i$ is a point of densification for $a$ since any neighbourhood $V$ of $\lambda$ contains a set $B_n$ for some $n$. Therefore $A\neq\emptyset$.
	
	We show that $A$ is closed. Indeed, if $\lambda$ is a limit point of $A$ and $V$ is a neighborhood of $\lambda$, then $V$ is also a neighborhood of some point from $A$. Hence $e(V)\sim \textbf{1}$. This shows $\lambda\in A$. Thus $A$ is closed.
	Obviously, $A\subset Y_R$. Therefore, $A$ is a nonempty compact subset in $\mathbb{C}$.

    Let us consider three cases covering the full picture.
    \begin{itemize}
    	\item  1. \textit{There is a point $\lambda_0\in\mathbb{C}$ such that $e(\{\lambda_0\})\sim\textbf{1}$.} Then $e(\mathbb{C}\setminus\{\lambda_{0}\})\preceq e(\{\lambda_0\})$ and therefore there is a $v\in\calM$ with $v^*v = e(\mathbb{C}\setminus\{\lambda_{0}\})$ and $vv^* \leq e(\{\lambda_0\})$.
   		Let 's put $u=v+v^*+(e(\{\lambda_0\})-vv^*)$. Then $u=u^*\in \calU(\calM)$. Since
    	\begin{align*}
    		(a-\lambda_0\textbf{1})u(a-\lambda_0\textbf{1})^*&=(a-\lambda_0\textbf{1})u(\textbf{1}-e(\{\lambda_0\}))(a-\lambda_0\textbf{1})^*\\
    		&=(a-\lambda_0\textbf{1})v(\textbf{1}-vv^*)(a-\lambda_0\textbf{1})^*\\
    		&=(a-\lambda_0\textbf{1})e(\{\lambda_0\})u(a-\lambda_0\textbf{1})^*=0
    	\end{align*}
    	and, similarly, $$(a-\lambda_0\textbf{1})^*u(a-\lambda_0\textbf{1})=0$$
    	then
    	$$|[a,u]|=|(a-\lambda_0\textbf{1})-u(a-\lambda_0\textbf{1})u|=|a-\lambda_0\textbf{1}|+u|a-\lambda_0\textbf{1}|u$$
    	which shows the result for this case with $w_{\varepsilon}=\textbf{1}$.
    \end{itemize}

    In the following two cases, the scalar $\lambda_0\in\mathbb{C}$ will be found and for a fixed number $\varepsilon>0$ a sequence of pairs of projectors $\{(p_n,q_n)\}_{n\geq 1}$ of $\calM$ will be constructed together with a sequence $(\gamma_{n}$) of positive numbers converging to zero satisfying the following conditions:

    (i). $p_nq_m=0,\ p_np_m=\delta_{nm}p_n,\ q_nq_m=\delta_{nm}q_n,\ [a,p_n]=[a,q_n]=0,\ p_n\sim q_n$ for all $n,m$;

    (ii). $q_n\leq e(W_n),\ p_n\leq e(V_n)$ for all $n\geq 1$;

    (iii). $\bigvee_{n\geq 0}p_n\vee\bigvee_{n\geq 0}q_n=\textbf{1}-e(\{\lambda_0\})$,\\
   where $V_n := \{\lambda: |\lambda-\lambda_0|>\gamma_n\}$ and  $W_n := \{\lambda: |\lambda-\lambda_0| < \frac{\varepsilon}{2}\gamma_n\}$.

    \begin{itemize}
    	\item 2. \textit{The set $A$ has a limit point $\lambda_0$.}
    	 We can assume that $\varepsilon<\frac{1}{2}$.
    	We inductively construct the sequences of positive numbers $(\gamma_n)$ (and hence the sets $V_n$, $W_n$), numbers $(\lambda_n)$ from $A$, and sets
    	\begin{align}
    	 U_n=\{\lambda:\ |\lambda-\lambda_{2n}|<\gamma_{n+1}\}
    	\end{align}
    	in such a way that $U_{n}\subseteq W_n\cap V_{n+1}$ and the set $V_{n+1}\setminus\bigcup_{k=1}^{n}(U_k\cup V_k))$ is a neighborhood of the point $\lambda_{2n+1}$.
    	First, let $\lambda_1\in A\setminus\{\lambda_{0}\}$ and put $\gamma_1=\frac{|\lambda_1-\lambda_0|}{2}$. Then $V_1$ is a neighborhood of the point $\lambda_1$.
    	Next, in the set $W_1$ there will be different points $\lambda_2,\lambda_3$ from $A\setminus\{\lambda_{0}\}$. Put $\gamma_2=\frac{1}{2}\min\{|\lambda_3-\lambda_0|,|\lambda_{2}-\lambda_3|, \frac{\varepsilon}{2}\gamma_1-|\lambda_2-\lambda_0|,|\lambda_2-\lambda_0|\}$ and note that $\gamma_2<\frac{1}{2}|\lambda_3-\lambda_0|\leq \frac{\gamma_1}{4}$.
    	Note also that the set $V_2\setminus(V_1\cup U_1)$ is a neighborhood of the point $\lambda_3$ and that $U_1\subseteq W_1\cap V_2$.
    	We continue this process by induction. Let these sequences be constructed for the indices $1,\dots,n$. Then in the set $W_n$ there will be different points $\lambda_{2n},\lambda_{2n+1}$ from $A\setminus\{\lambda_{0}\}$. Put  $\gamma_{n+1}=\frac{1}{2}\min\{|\lambda_{2n+1}-\lambda_0|,|\lambda_{2n}-\lambda_{2n+1}|,\frac{\varepsilon}{2}\gamma_n - |\lambda_{2n}-\lambda_0|,|\lambda_{2n}-\lambda_0|\}$. Then $\gamma_{n+1}<\frac{\gamma_{n}}{4}$ and
    	$V_{n+1}\setminus\bigcup_{k=1}^{n}(U_k\cup V_k))$ is a neighborhood of the point $\lambda_{2n+1}$, and $U_n\subseteq W_n\cap V_{n+1}$. Thus, the above sequences are constructed. We remark that for $n<m$ we have $U_n\cap U_m\subseteq W_m\cap V_{n+1}=\emptyset$
    	
    	Put $p_1=e(V_1),\ q_1=e(U_1);\ q_n=e(U_n),\ p_n=e(V_n\setminus\bigcup_{k=1}^{n-1}(U_k\cup V_k))$ for $n>1$. Then we have by the construction that $p_1,q_1,p_2,q_2,...$ are pairwise orthogonal and $p_n\sim\textbf{1}\sim q_n$ for any $n$.
    	Now since $V_n=(V_n\setminus\bigcup_{k=1}^{n-1}(U_k\cup V_k)))\cup\bigcup_{k=1}^{n-1}(U_k\cup V_k))$ and $\bigcup_{n=1}^\infty V_n=\mathbb{C}\setminus\{\lambda_0\}$ we find $\bigvee_{n\geq 0}p_n\vee\bigvee_{n\geq 0}q_n=\textbf{1}-e(\{\lambda_0\})$.
    	\item   3. \textit{The set $A$ is finite and $e(\{\lambda\})\prec \textbf{1}$ for any $\lambda\in A$.} We can by assumption write $A = \{\lambda_0,\dots, \lambda_m\}$ for some $m\geq 0$ (note $A$ is non-empty). When $|A|=1$ put $r=1$ and when $|A|>1$ let $r$ be the minimum distance between points in $A$.    	
    	Consider the sets $V(t)=\mathbb{C}\setminus\bigcup_{k=0}^m\{\lambda:\ |\lambda-\lambda_k|<t\}$ for $0<t<\frac{r}{2}$.
    	It is clear that $V(t)\uparrow\mathbb{C}\setminus A$ at $t\downarrow 0$.
    	
    	We show that $e(V(t))\prec\textbf{1}$ for $0<t<\frac{r}{2}$.
    	Indeed, for any point $z\in V(t)\setminus X_R$ there is a neighborhood $U_z$ of $z$ with $e(U_z)\prec\textbf{1}$. Now as the set $V(t)\setminus X_R$ is compact we can let $\{U_{z_1},\dots,U_{z_l}\}$ be a finite subcover for $V(t)\setminus X_R$.
    	Then $\{X_R,U_{z_1},\dots,U_{z_l}\}$ is the coverage of the set $V(t)$. It follows from Lemma  \ref{l1_inf} that $e(V(t))\prec\textbf{1}$.
    	
    	There are now two possible cases:
    	
    	3.1. All projections $e(V(t)),\ t>0$, are finite. In this case, put $\gamma_1=\frac{r}{3}$.
    	
    	3.2. There is a $0<t_0<\frac{r}{3}$ so that the projection $e(V(t_0))$ is infinite. In this case put $\gamma_1=t_0$.
    	
    	Set $\gamma_{n}=\frac{\gamma_1}{2^{n-1}},\ n>1$ (and hence $V_n,W_n$ are defined as well); We set $p_1:=e(V(\gamma_1)\cup (A\setminus\{\lambda_0\}))\leq e(V_1)$.  It follows from Lemma \ref{l1_inf} that $p_1\prec \textbf{1}$ and $p:=e(W_1)\sim \textbf{1}$.
    	If we put $q=p_1$, then for $p,q$ the conditions Lemma \ref{l2_inf} are met: condition (ii) is met if $q$ is an infinite projection, and condition (i) is met in case 3.1 if $q$ is an finite projection (in this case, the set $W_1$ is covered by the system $V(t),\ t>0$). Therefore, there is a projection $q_1\leq e(W_1)$ such that $q_1\sim p_1$ and $[a,q_1]=0$.
    	Now, suppose the projections $p_1,q_1,\dots,p_n,q_n\prec\textbf{1}$ are constructed. We build projections $p_{n+1},q_{n+1}$. We put
    	 $p_{n+1}=e(V(\gamma_{n+1}))\cdot(\textbf{1}-\sum_{k=1}^n(p_k+q_k))$. Then $p_{n+1}\prec\textbf{1}$ since $p_{n+1}\leq e(V(\gamma_{n+1}))$.
    Furthermore, since $e(W_n)\sim \textbf{1}$ and $p_1,q_1,\dots,p_n,q_n\prec\textbf{1}$ we find $e(W_n)\cdot(\textbf{1}-\sum_{k=1}^n(p_k+q_k))\sim\textbf{1}$. Again using Lemma \ref{l2_inf}, we find such a projection $q_{n+1}\sim p_{n+1}$ that $q_{n+1}\leq e(W_n)\cdot(\textbf{1}-\sum_{k=1}^n(p_k+q_k))$ and $[a,q_{n+1}]=0$ (two cases are considered again: $p_{n+1}$ is a infinite projection; $p_{n+1}$ is a finite projection and the condition 3.1 is met). As $p_{n+1}+\sum_{k=1}^n(p_k+q_k)\geq e(V(\gamma_{n+1}))$ and $p_1\geq e(A\setminus \{\lambda_0\})$ we conclude $\sum_{k=1}^\infty(p_k+q_k)=\textbf{1}-e(\{\lambda_0\})$.
    	Therefore, the projections $p_1,q_1,p_2,q_2,\dots$ satisfy the conditions (i)-(iii).
    \end{itemize}
 	In the cases (2) and (3) we can now find partial isometries $v_n\in\calM$ so that $v_n^*v_n=p_n,\ v_nv_n^*=q_n,$ for  $n=1,2,\dots$. We put $u_\varepsilon=e(\{\lambda_0\})+\sum_{n=1}^\infty(v_n+v_n^*)$. Then $u_\varepsilon=u_\varepsilon^*\in \calU(\calM)$, $u_\varepsilon e(\{\lambda_0\})=e(\{\lambda_0\})$ and $u_\varepsilon p_n=q_nu_\varepsilon$ for all $n$.
    We have
    \begin{align}\label{t1_1}
        |a-\lambda_0\textbf{1}|p_n\geq\gamma_{n}p_n,\ |a-\lambda_0\textbf{1}|q_n\leq \frac{\varepsilon}{2}\gamma_nq_n,\ \forall n.
    \end{align}
    Therefore
    \begin{align}\label{t1_2}
        |u_\varepsilon au_\varepsilon-\lambda_0\textbf{1}|p_n=u_\varepsilon|a-\lambda_0\textbf{1}|q_nu_\varepsilon\leq \frac{\varepsilon}{2}\gamma_nu_\varepsilon q_nu_\varepsilon=\frac{\varepsilon}{2}\gamma_np_n,\ \forall n.
    \end{align}

    Since $[a,p_n]=[u_\varepsilon au_\varepsilon,p_n]=0$ then
    \begin{align}\label{t1_3}
        |a-u_\varepsilon au_\varepsilon|p_n=|(a-\lambda_0\textbf{1})p_n-(u_\varepsilon au_\varepsilon-\lambda_0\textbf{1})p_n|.
    \end{align}

    It follows from \cref{l3_inf} that
    $$w_n|a-u_\varepsilon au_\varepsilon|p_nw_n^*\geq ((1-\varepsilon)|a-\lambda_0\textbf{1}|+|u_\varepsilon au_\varepsilon-\lambda_0\textbf{1}|)p_n$$
    for some $w_n\in \calU(p_n\calM p_n)$.

    Therefore
    \begin{align}\label{t1_4}
        w_n|a-u_\varepsilon au_\varepsilon|w_n^*p_n\geq((1-\varepsilon)(|a-\lambda_0\textbf{1}|+|u_\varepsilon au_\varepsilon-\lambda_0\textbf{1}|))p_n.
    \end{align}
    Applying the automorphism $u_\varepsilon\cdot u_\varepsilon$ to (\ref{t1_4}), and noting that $u_{\varepsilon}|a-u_{\varepsilon}au_{\varepsilon}|u_{\varepsilon} = |a-u_{\varepsilon}au_{\varepsilon}|$, we obtain
    \begin{align}\label{t1_5}
        (u_\varepsilon w_nu_\varepsilon)|a-u_\varepsilon au_\varepsilon|(u_\varepsilon w_nu_\varepsilon)^*q_n\geq ((1-\varepsilon)(|a-\lambda_0\textbf{1}|+|u_\varepsilon au_\varepsilon-\lambda_0\textbf{1}|))q_n.
    \end{align}
	Recall that $S(\calM)=\calM$ if $\calM$ has type I or III. In this case, we denote by $t$ the strong operator topology in $\calM$. If the factor $\calM$ is of type II then $\calS(\calM)=\calS(\calM,\tau)$ for any faithful semi-finite normal trace $\tau$ on $\calM$. In this case we let $t$ stand for the measure topology $t_{\tau}$ (this topology is defined in \cref{section:preliminaries}, the need to use this topology is due to the fact that $a$ can be an unbounded operator). 
	
    To complete the proof, it remains to set
    $$w_\varepsilon=e(\{\lambda_0\})+\sum_{n=1}^{\infty}(w_n+u_\varepsilon w_nu_\varepsilon)$$ (the series converges in the strong operator topology)
    and sum up the inequalities (\ref{t1_4}) and (\ref{t1_5}) in the topology $t$.
\end{proof}

\section{Estimates for inner derivations associated to normal elements}\label{section:estimates-for-derivations}
In this section we apply the operator estimates from \cref{I_n} and \cref{t3} to extend the result of \cite[Theorem 1.1]{BBS} and estimate  the norm of inner derivations $\delta_{a}:\calM\to L_1(\calM,\tau)$ in the case when $\calM$ a finite factor with faithful normal trace $\tau$ and 
$a\in L_1(\calM,\tau)$ is normal.

We establish some notation first. Let $\calM$ be a von Neumann algebra with predual $\calM_*$. The Banach space $\calM_*$ can be embedded into its double dual $(\calM_*)^{**} = \calM^*$. In this way we identify $\calM_*$ with the space of ultra weakly continuous linear functionals on $\calM$. The predual $\calM_*$ is a Banach $\calM$-bimodule with the bimodule actions given by:
\begin{align}\label{eq:M-bimodule}
	(a\cdot\omega)(x)=\omega(xa),\ (\omega\cdot a)(x)=\omega(ax),\ a,x\in\calM,\ \omega\in \calM_*.
\end{align}
If there is a faithful normal semi-finite trace $\tau$ on $\calM$, then the Banach $\calM$-bimodule $\calM_*$ is isomorphic to $L_1(\calM,\tau)$ (see e.g.
\cite[Lemma 2.12 and Theorem 2.13]{Tak2}).

A linear operator $\delta:\calM\to\calM_*$ is called a \textit{derivation} if
$$\delta(xy)=\delta(x)y+x\delta(y)$$
for all $x,y\in\calM$. 
For each $a\in\calM_*$ a derivation $\delta_a:\calM\to\calM_*$ can be defined by the equality $$\delta_a(x)=[a,x]=ax-xa$$ (using the $\calM$-bimodule structure as defined in \eqref{eq:M-bimodule}). Such derivations are called \textit{inner}.
In fact it holds true that any derivation $\delta:\calM\to\calM_*$ is inner.  Moreover, there exists $a\in\calM_*$ so that $\delta=\delta_{a}$ and $\|a\|_{\calM_*}\leq \|\delta\|_{\calM\to\calM_*}$ see \cite[Theorem 4.1]{Haagerup} and \cite[Corollary  C]{BGM}. We are interested in describing the norm of the derivations $\delta_{a}:\calM\to \calM_*$ for $a\in \calM_*$. Is it true that a distance formula similar to \eqref{eq:derivation-distance-formula} holds true? This question has been fully settled in \cite[Theorem 3.1]{BBS} for infinite factors. Moreover, in \cite{BBS} the following theorem was proved:

\begin{theorem}\cite[Theorem 1.1]{BBS}
	If $\calM$ is a von Neumann algebra with a faithful normal finite trace $\tau$ and $a=a^*\in L_1(\calM,\tau)$, then  there exists  $c_a=c_a^* \in L_1(\calM,\tau)\cap Z(S(\calM))$ such that
	\begin{align}\label{bbs2}
		\left\| \delta_a \right\|_{\calM\to L_1(\calM,\tau)} =2\left\|a-c_a\right\|_1= 2 \min_{z\in Z(S(\calM))}\left\| a-z \right\|_1
	\end{align}
	where $Z(S(\calM))$ stands for the center of the algebra of all measurable operators affiliated with $\calM$ 
\end{theorem}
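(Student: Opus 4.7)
The plan is to match upper and lower bounds on $\|\delta_a\|_{\calM\to L_1(\calM,\tau)}$, with the upper bound coming from a H\"older/triangle argument and the lower bound coming from the operator inequality \eqref{bs_t1_2_1} applied on the trace level.

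First, I would establish the upper bound. Any $z\in Z(S(\calM))$ commutes in $S(\calM)$ with every $x\in \calM$, so $\delta_a(x)=[a,x]=[a-z,x]$. By the noncommutative H\"older inequality one has
\begin{align*}
\|[a-z,x]\|_1\leq \|(a-z)x\|_1+\|x(a-z)\|_1\leq 2\|a-z\|_1\|x\|_{\calM},
\end{align*}
so $\|\delta_a\|_{\calM\to L_1(\calM,\tau)}\leq 2\|a-z\|_1$. Taking the infimum over $z\in Z(S(\calM))$ yields the upper bound in \eqref{bbs2}.

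For the lower bound I would invoke the extension of inequality \eqref{bs_t1_2_1} from factors to arbitrary von Neumann algebras established in \cite{BSWA}, where the scalar $\lambda_0\mathbf{1}$ is replaced by a central element. Applied to the self-adjoint element $a$, this produces $c_a=c_a^*\in Z(S(\calM))$ and, for each $\varepsilon>0$, a self-adjoint unitary $u_\varepsilon\in\calU(\calM)$ satisfying
\begin{align*}
|[a,u_\varepsilon]|\geq (1-\varepsilon)\bigl(|a-c_a|+u_\varepsilon|a-c_a|u_\varepsilon\bigr).
\end{align*}
Applying $\tau$ and using its tracial property gives
\begin{align*}
\|[a,u_\varepsilon]\|_1=\tau(|[a,u_\varepsilon]|)\geq 2(1-\varepsilon)\|a-c_a\|_1,
\end{align*}
and since $\|u_\varepsilon\|_{\calM}=1$ one concludes $\|\delta_a\|_{\calM\to L_1(\calM,\tau)}\geq 2(1-\varepsilon)\|a-c_a\|_1$. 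Sending $\varepsilon\downarrow 0$ yields $\|\delta_a\|_{\calM\to L_1(\calM,\tau)}\geq 2\|a-c_a\|_1$.

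Combining the two inequalities forces
\begin{align*}
2\|a-c_a\|_1\leq \|\delta_a\|_{\calM\to L_1(\calM,\tau)}\leq 2\min_{z\in Z(S(\calM))}\|a-z\|_1\leq 2\|a-c_a\|_1,
\end{align*}
so all three quantities coincide and $c_a$ realises the minimum. Integrability of $c_a$ follows from $\|c_a\|_1\leq \|a\|_1+\|a-c_a\|_1\leq 2\|a\|_1<\infty$, where the second inequality uses that $0$ is a competitor in the minimisation. The main obstacle I expect is ensuring that the operator inequality \eqref{bs_t1_2_1}, originally stated for $a\in S(\calM)$, transfers cleanly to $a\in L_1(\calM,\tau)$ and yields a central element $c_a$ that is itself in $L_1(\calM,\tau)$; this is precisely where the centre-valued refinement of \cite{BSWA} is indispensable, since for non-factors one cannot hope to bring $c_a$ down to a scalar.
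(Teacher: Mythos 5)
The paper does not prove this statement itself---it is quoted verbatim from \cite{BBS}---but the introduction records that it was obtained there exactly along the route you propose: the H\"older estimate $\|[a-z,x]\|_1\le 2\|a-z\|_1\|x\|$ for the upper bound and the centre-valued two-term commutator inequality \eqref{bs_t1_2_1}, traced against $\tau$ with a self-adjoint unitary, for the lower bound; your argument (including the integrability check on $c_a$) is correct and matches that approach. The only slight inaccuracy is attribution: the two-term strengthening \eqref{bs_t1_2_1} is \cite[Theorem B.1]{BBS}, whereas \cite{BSWA} contains the centre-valued extension of \cref{bs_t1}, whose equality form for finite von Neumann algebras would in fact let you run your lower bound without any $\varepsilon$.
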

We focus on the case that $\calM$ is finite. For brevity, we will denote the norm $\|\cdot\|_{\calM\to L_1(\calM,\tau)}$ by $\|\cdot\|_{\infty,1}$. For general $a\in L_1(\calM,\tau)$ we do not know the relationship between $\left\| \delta_a \right\|_{\infty,1}$ and $\inf \{ \left\| a-z \right\|_1 :z\in Z(S(\calM))$. In \cref{tderivationbound}, we shall give upper and lower estimates of this relation in the case when $\calM$ is a finite factor and $a$ is a normal operator. We will see a substantial difference with the case of inner derivations associated to self-adjoint elements. First we state \cref{trel1} which is related and is used in the proof of \cref{tderivationbound}. Recall that when $n\equiv 0\ (\mod 3)$ or $n=\infty$ we have $2\Lambda_n = \sqrt{3} = \widetilde{\Lambda}_n$ and that in addition,
$$\lim_{n\to\infty}\widetilde{\Lambda}_n=\sqrt{3},$$
and 
$$ 2\Lambda_n=\sqrt{3}\ \text{for}\ n=3,\text{ or } n\geq 5.$$

For convenience, we define for a finite factor $\calM$ the value
\begin{align}
	n(\calM) = \begin{cases}
		n & \calM \text{ is a I$_n$-factor} \\
		\infty & \calM \text{ is a II$_1$-factor}
	\end{cases}
\end{align}

\begin{theorem}\label{trel1}
    Let $\calM$ be a finite factor with a faithful tracial state $\tau$. Assume $\calM\not=\CC$.  Then
    \begin{enumerate}
        \item For every derivation $\delta_{a}:\calM\to L_1(\calM,\tau)$ with $a\in \calM$ normal, there is a normal $b\in \calM$ such that $\delta_a = \delta_b$ and  $\|\delta_b\|_{\infty,1}\geq 2\Lambda_{n(\calM)}\|b\|_1$.
        \item There exists a normal $a\in \calM$ for which the derivation $\delta_a:\calM\to L_1(\calM,\tau)$ is non-zero and such that for every  $b\in \calM$ with $\delta_a = \delta_b$ we have $\|\delta_b\|_{\infty,1}\leq \widetilde{\Lambda}_{n(\calM)}\|b\|_1$.
    \end{enumerate}
\begin{proof}
	
	(1) Let $a\in \calM$ be normal. By \cref{t3} there exist $u,w\in \calU(\calM)$, $z_0\in \CC$ satisfying the commutator estimate \eqref{eq:optimal-constant-commutator}, hence $\|\delta_{a}\|_{\infty,1} \geq \|\delta_{a}(u)\|_1 \geq 2\Lambda_{n(\calM)}\|a-z_0\mathbf{1}\|_1$.
	This shows the result since  $b:=a-z\textbf{1}$ is normal and $\delta = \delta_a = \delta_{b}$.
	
	(2)  Let $\calM$ be a finite factor. When $\calM$ is a I$_n$-factor, we set $m:=n$ and we can write $\calM = \Mat_m(\calN)$, with $\calN=\CC$. When $\calM$ is a II$_1$-factor we set $m=3$ and we can write $\calM = \Mat_m(\calN)$ for some II$_1$-factor $\calN$. We now let $g\in L_\infty(\Omega_m)$ be non-constant and let $a$ be the diagonal matrix $a = \Diag(g(1),\ldots,g(m))\otimes\textbf{1}_{\calN}\in\Mat_m(\CC)\otimes\calN=\calM$. Then $\delta_a:\calM\to L_1(\calM,\tau)$ is a non-zero derivation.
	To estimate $\|\delta_a\|_{\infty,1}$ we recall that the Russo{\color{red}--}Dye Theorem, \cite[Theorem 1]{RuDy}, asserts for a unital $\Cstar$-algebra that the closed unit ball equals the closed convex hull of all the unitaries. 
	Now, for $x\in \Conv(\calU(\calM))$ we can write $x = \sum_{i=1}^{N}c_iu_i$ with $N\in\NN$, 
	$u_i\in \calU(\calM)$ and $c_i\geq 0$ with $\sum_{i=1}^{N}c_i =1$. Then clearly $\|\delta_a(x)\|_1 \leq \sum_{i=1}^{N}c_i \|\delta_a(u_i)\|_1 \leq \max_{1\leq i\leq N}\|\delta(u_i)\|_{1} \leq \sup_{u\in \calU(\calM)}\|\delta_a(u)\|_1$. By continuity of $\delta_{a}$ this inequality holds for all $x$ in the closed convex hull as well. By the Russo-Dye Theorem this shows that 
	\begin{align}\label{eq:russo-dye-result}
		\|\delta_a\|_{\infty,1} = \sup_{x\in \calM, \|x\|\leq 1}\|\delta_a(x)\|_1 = \sup_{x\in\overline{ \Conv(\calU(\calM))}}\|\delta_a(x)\|_1= \sup_{u\in \calU(\calM)}\|\delta_a(u)\|_1.
	\end{align}
	Using this and \cref{prop:restrict-to-permutation-matrix} we find
	\begin{align*}
		\|\delta_a\|_{\infty,1} &= 
		\sup_{u\in \calU(\calM)}\|\delta_a(u)\|_{1} \\
		&= \sup_{u\in \calU(\Mat_m(\calN))}\|u^*[a,u]\|_{1}\\
		& =  \sup_{u\in \calU(\Mat_m(\calN))}\|u^*au - a\|_{1}\\
		&\leq  \sup_{u\in \calU(\Mat_m(\calN))}\|u^*au - a\|_{2}\\
		&=  \sup_{u\in \calU_m^{per}\otimes\textbf{1}_{\calN}}\|u^*au - a\|_{2}\\
		&= \sup_{\substack{T:\Omega_m\to \Omega_m \\ \text{permutation}}}\| g\circ T - g\|_2.
	\end{align*}
	The last step follows from the fact that, for  $u\in \calU_m^{per}\otimes\textbf{1}_{\calN}$, we have $u^*au = \Diag(g\circ T(1),\ldots, g\circ T(n))\otimes\textbf{1}_{\calN}$ for some permutation $T$.
	By \cref{lemma:optimality-triangle-function} we can fix a $g$ so that $\Diam(g(\Omega_m)) =1\leq  \widetilde{\Lambda}_m\inf_{z\in \CC}\|g-z\|_1$ (note that such $g$ is non-constant).
	Take any $b\in \calM$ with $\delta_a = \delta_b$. Then $a-b$ lies in the center of $\calM$, so $a-b=z_0\textbf{1}$ for some $z_0\in \CC$. Hence, $\|b\|_1 = \|a-z_0\textbf{1}\|_1 = \|g-z_0\|_1$ so that $\|\delta\|_{\infty,1}\leq \Diam(g(\Omega_m))\leq \widetilde{\Lambda}_m\|b\|_1$.
	The result now follows. Indeed, when $\calM$ is a I$_n$-factor, we obtained $\|\delta\|_{\infty,1}\leq \widetilde{\Lambda}_{n(\calM)}\|b\|_{1}$ and when $\calM$ is a II$_1$-factor we obtained $\|\delta\|_{\infty,1}\leq \widetilde{\Lambda}_{3}\|b\|_1 = \widetilde{\Lambda}_{\infty}\|b\|_1 = \widetilde{\Lambda}_{n(\calM)}\|b\|_1$.
\end{proof}
\end{theorem}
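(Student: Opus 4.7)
The plan is to prove (1) by directly invoking the commutator estimate of \cref{t3} and taking traces, and to prove (2) by exhibiting an explicit normal element, reducing the supremum over the unit ball first to unitaries via the Russo--Dye theorem and then to permutation matrices via \cref{prop:restrict-to-permutation-matrix}.

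For part (1), given a normal $a\in \calM$, I would apply \cref{t3} to produce $u,v,w\in \calU(\calM)$ and $z_0\in \CC$ satisfying
\begin{align*}
|[a,u]| \geq C_{\calM}\bigl(v|a-z_0\mathbf{1}|v^* + w|a-z_0\mathbf{1}|w^*\bigr),
\end{align*}
where the optimal constant $C_{\calM}$ satisfies $C_{\calM}\geq \Lambda_{n(\calM)}$. Setting $b := a - z_0\mathbf{1}$, the element $b$ is normal, and $\delta_b = \delta_a$ because the scalar $z_0\mathbf{1}$ lies in the center. Taking $\|\cdot\|_1$ on both sides, using the trace invariance $\|v|b|v^*\|_1 = \|w|b|w^*\|_1 = \|b\|_1$ and the fact that $\|u\|\leq 1$, I get $\|\delta_a\|_{\infty,1}\geq \|[a,u]\|_1 \geq 2C_{\calM}\|b\|_1 \geq 2\Lambda_{n(\calM)}\|b\|_1$, as required.

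For part (2), the strategy is to exhibit a diagonal normal element whose derivation norm is explicitly controlled. If $\calM$ is the $I_n$-factor I set $m=n$ and write $\calM\cong \Mat_m(\CC)$ (taking $\calN=\CC$); if $\calM$ is a $II_1$-factor I set $m=3$ and write $\calM\cong \Mat_3(\CC)\otimes \calN$ for some $II_1$-factor $\calN$. I then take $g\in L_\infty(\Omega_m)$ as supplied by \cref{lemma:optimality-triangle-function} (with $\Diam(g(\Omega_m))=1$ and $\widetilde{\Lambda}_m = \sup_{z\in \CC}\|g-z\|_1^{-1}$) and define
\begin{align*}
a := \Diag\bigl(g(1),\dots,g(m)\bigr)\otimes \mathbf{1}_{\calN}\in \calM.
\end{align*}
Since $g$ is non-constant, $\delta_a\neq 0$. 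To estimate $\|\delta_a\|_{\infty,1}$ I would apply the Russo--Dye theorem (\cite[Theorem 1]{RuDy}) to reduce the supremum over the closed unit ball of $\calM$ to $\calU(\calM)$, then use Cauchy--Schwarz in the trace to bound $\|[a,u]\|_1\leq \|u^*au - a\|_2$, and finally apply \cref{prop:restrict-to-permutation-matrix} to restrict the supremum over $\calU(\calM)$ to $\calU_m^{per}\otimes \mathbf{1}_\calN$. This turns the problem into bounding $\sup_T \|g\circ T - g\|_2$ over permutations $T$ of $\Omega_m$, which is at most $\Diam(g(\Omega_m)) = 1$.

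To conclude, any $b\in \calM$ with $\delta_b = \delta_a$ differs from $a$ by a central scalar $z_0\mathbf{1}$, so $\|b\|_1 = \|a-z_0\mathbf{1}\|_1 = \|g-z_0\|_1$. Taking the infimum over $z_0\in \CC$ and using $\widetilde{\Lambda}_m^{-1}\leq \inf_{z_0\in \CC}\|g-z_0\|_1$ from \cref{lemma:optimality-triangle-function} gives $\|\delta_a\|_{\infty,1}\leq 1 \leq \widetilde{\Lambda}_m\|b\|_1 = \widetilde{\Lambda}_{n(\calM)}\|b\|_1$ (using that $\widetilde{\Lambda}_3 = \widetilde{\Lambda}_\infty = \sqrt{3}$ in the $II_1$ case). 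The main technical obstacle is the chain of reductions in part (2); in particular, the step that converts the supremum over $\calU(\calM)$ to permutation matrices hinges on \cref{prop:restrict-to-permutation-matrix}, without which the diagonal structure of $a$ would be insufficient to extract the sharp bound $\widetilde{\Lambda}_{n(\calM)}$.
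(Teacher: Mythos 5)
Your proposal is correct and follows essentially the same route as the paper: part (1) applies \cref{t3} and takes $\|\cdot\|_1$ (you merely spell out the trace-invariance step the paper suppresses), and part (2) uses the same diagonal element built from the function of \cref{lemma:optimality-triangle-function}, with the identical Russo--Dye reduction to unitaries, the $\|\cdot\|_1\leq\|\cdot\|_2$ step, and \cref{prop:restrict-to-permutation-matrix} to pass to permutation matrices. No gaps to report.
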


The following theorem shows that for (most) finite factors the distance formula from \eqref{bbs2} does not hold for arbitrary normal $a\in L_1(\calM,\tau)$, which shows a crucial difference with the classical result of Stampfli and its generalisations describing the norm of derivations $\delta_{a}:\calM\to \calM$, as for these derivations the distance formula \eqref{eq:derivation-distance-formula} holds for all $a\in \calM$. While the distance formula does not hold true, we are able to obtain constant bounds on the ratio  $\frac{\|\delta_{a}\|_{\infty,1}}{\min_{z\in \CC}\|a-z\mathbf{1}\|_1}$. In the case of II$_1$-factors and I$_n$-factors ($1<n<\infty$) with $n\equiv 0\mod 3$ these constants can not be improved.

\begin{theorem}\label{tderivationbound}
	Let $\calM$ be a finite factor with a faithful tracial state $\tau$ and let $a\in L_1(\calM,\tau)\setminus Z(\calM)$ be normal and measurable. Then the derivation $\delta_{a}:\calM\to L_1(\calM,\tau)$ satisfies:
	\begin{align}\label{eq:bounds-derivation}
		2\Lambda_{n(\calM)}\leq \frac{\|\delta_{a}\|_{\infty,1}}{\min_{z\in \CC}\|a-z\mathbf{1}\|_1} \leq 2.
	\end{align}
	Moreover, when $\calM\not=\CC$ there exist non-zero derivations $\delta_a,\delta_b$ corresponding to normal $a,b\in \calM$ such that $\|\delta_{a}\|_{\infty,1} \leq \widetilde{\Lambda}_{n(\calM)}\min_{z\in \CC}\|a-z\mathbf{1}\|_1$ and $\|\delta_{b}\|_{\infty,1} = 2\min_{z\in \CC}\|b-z\mathbf{1}\|_1$. We remark that
	\begin{enumerate}
		\item When $n(\calM)\not\in \{1,2,4\}$ then the distance formula of \eqref{bbs2} does not extend to arbitrary normal measurable $a\in L_1(\calM,\tau)\setminus Z(\calM)$, since $\widetilde{\Lambda}_{n(\calM)}<2$ in these cases.\label{remark-1}
		\item When $\calM$ is a II$_1$-factor or a $I_n$-factor with $n\equiv 0 \mod 3$ then the constant bounds given in \eqref{eq:bounds-derivation} can not be improved as in these cases $2\Lambda_{n(\calM)}=\sqrt{3} = \widetilde{\Lambda}_{n(\calM)}$.\label{remark-2}
	\end{enumerate}
	\begin{proof}
Let $a\in L_1(\calM,\tau)\setminus Z(\calM)$ be normal and measurable. By \cref{t3} there exist $u,w\in \calU(\calM)$, $z_0\in \CC$ satisfying \eqref{eq:optimal-constant-commutator} so that $\|\delta_{a}\|_{\infty,1} \geq \|\delta_{a}(u)\|_1 \geq 2\Lambda_{n(\calM)}\|a-z_0\mathbf{1}\|_1$, from which the first inequality follows.
The second inequality follows from the fact that $\|\delta_a(x)\|_1 = \|(a-z\mathbf{1})x - x(a-z\mathbf{1})\|_1\leq 2\|a-z\mathbf{1}\|_1\|x\|$ holds for any $x\in \calM$, $z\in \CC$.

For the next statement,  we note by \eqref{bbs2} that $\|\delta_{b}\|_{\infty,1}=2\inf_{z\in \CC}\|b-z\mathbf{1}\|_1$ holds for any self-adjoint $b\in \calM$, and that when $\calM\not=\CC$ we can choose $b$ so that moreover $b\not\in Z(\calM)$, ensuring that $\delta_b$ is non-zero. Moreover, by \cref{trel1}(2) we obtain a normal $a\in \calM$ such that $\delta_{a}$ is a non-zero derivation with $\|\delta_{a}\|_{\infty,1} \leq \widetilde{\Lambda}_{n(\calM)}\|a-z\mathbf{1}\|_1$ for every $z\in \CC$ since $\delta_{a} = \delta_{a-z\mathbf{1}}$. Thus $\|\delta_{a}\|_{\infty,1} \leq \widetilde{\Lambda}_{n(\calM)}\min_{z\in \CC}\|a-z\mathbf{1}\|_1$ (it is clear the minimum exists).  
The last two remarks follow directly.
\end{proof}
\end{theorem}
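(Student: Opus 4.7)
The plan is to derive \eqref{eq:bounds-derivation} as a direct consequence of the commutator inequality of \cref{t3}, and to obtain the sharpness statements from \cref{trel1}(2) together with \cite[Theorem 1.1]{BBS}.

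For the lower bound, I would apply \cref{t3} to the normal measurable element $a\in L_1(\calM,\tau)\subseteq S(\calM)$ to produce a scalar $z_0\in\CC$ and unitaries $u,v,w\in\calU(\calM)$ with
\[|[a,u]|\geq \Lambda_{n(\calM)}\bigl(v|a-z_0\mathbf{1}|v^*+w|a-z_0\mathbf{1}|w^*\bigr).\]
Taking $\|\cdot\|_1$ of both sides, using tracial invariance so that $\|v|a-z_0\mathbf{1}|v^*\|_1=\|w|a-z_0\mathbf{1}|w^*\|_1=\|a-z_0\mathbf{1}\|_1$, and combining with $\|[a,u]\|_1=\|\delta_a(u)\|_1\leq \|\delta_a\|_{\infty,1}$ (since $\|u\|_{\calM}=1$), one obtains $\|\delta_a\|_{\infty,1}\geq 2\Lambda_{n(\calM)}\|a-z_0\mathbf{1}\|_1\geq 2\Lambda_{n(\calM)}\min_{z\in\CC}\|a-z\mathbf{1}\|_1$. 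The upper bound is immediate from $\delta_a=\delta_{a-z\mathbf{1}}$: for any $z\in\CC$ and any contraction $x\in\calM$, $\|\delta_a(x)\|_1=\|[a-z\mathbf{1},x]\|_1\leq 2\|a-z\mathbf{1}\|_1$, whence $\|\delta_a\|_{\infty,1}\leq 2\min_{z\in\CC}\|a-z\mathbf{1}\|_1$.

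For the sharpness claim, the desired $a$ is given directly by \cref{trel1}(2): the resulting non-zero derivation satisfies $\|\delta_a\|_{\infty,1}\leq \widetilde{\Lambda}_{n(\calM)}\|b\|_1$ for every $b\in\calM$ with $\delta_b=\delta_a$, and because $\calM$ is a factor the set of such $b$ equals $\{a-z\mathbf{1}:z\in\CC\}$, so the bound becomes $\|\delta_a\|_{\infty,1}\leq \widetilde{\Lambda}_{n(\calM)}\min_{z\in\CC}\|a-z\mathbf{1}\|_1$. For $b$, I would take any self-adjoint element in $\calM\setminus Z(\calM)$ and invoke \cite[Theorem 1.1]{BBS}; since $\calM$ is a factor we have $Z(S(\calM))=\CC\mathbf{1}$, so the distance formula reduces to $\|\delta_b\|_{\infty,1}=2\min_{z\in\CC}\|b-z\mathbf{1}\|_1$, and $\delta_b\neq 0$ since $b\notin Z(\calM)$.

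The numbered remarks (1) and (2) then follow immediately from the explicit values of $\Lambda_n$ and $\widetilde{\Lambda}_n$ recorded in \eqref{eq:estimates-lambda} and \eqref{eq:definition-tilde-lambda_n}: for $n(\calM)\notin\{1,2,4\}$ one has $\widetilde{\Lambda}_{n(\calM)}<2$, so the $a$ constructed above witnesses the failure of the distance formula \eqref{bbs2}; for $n(\calM)=\infty$ or $n(\calM)\equiv 0\pmod 3$, both $2\Lambda_{n(\calM)}$ and $\widetilde{\Lambda}_{n(\calM)}$ equal $\sqrt{3}$, so the constants in \eqref{eq:bounds-derivation} cannot be tightened. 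No step here presents a genuine obstacle: the heavy lifting is in \cref{t3}, \cref{trel1}, and \cite[Theorem 1.1]{BBS}, and the present argument is just a careful assembly of these results, with the only mildly subtle point being to identify $Z(S(\calM))$ with $\CC\mathbf{1}$ when invoking the BBS distance formula.
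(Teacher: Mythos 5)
Your proposal is correct and follows essentially the same route as the paper: the lower bound by applying \cref{t3} and taking $\|\cdot\|_1$-norms, the upper bound from $\delta_a=\delta_{a-z\mathbf{1}}$, and the sharpness examples from \cref{trel1}(2) together with the BBS distance formula \eqref{bbs2} specialized to factors. The extra details you spell out (tracial invariance of $\|\cdot\|_1$ under conjugation and the identification $Z(S(\calM))=\CC\mathbf{1}$) are exactly what the paper leaves implicit, so there is nothing to correct.
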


We remark that the above argument actually yields an estimate on the 
$L_1$-diameter of the unitary orbit 
$\calO(a)= \{uau^*: u\in \calU(\calM)\}$ of $a$. 
Indeed, as we already showed in  \eqref{eq:russo-dye-result}, we obtain by the Russo-Dye Theorem \cite[Theorem 1]{RuDy} that  $\|\delta_{a}\|_{\infty,1} = \sup_{u\in \calU(\calM)}\|\delta_a(u)\|_1$. Therefore
$$\Diam_{L_1(\calM,\tau)}(\calO(a)) 
= \sup_{u\in \calU(\calM)}\|a-uau^*\|_1 
= \sup_{u\in \calU(\calM)}\|\delta_a(u)\|_1 
= \|\delta_a\|_{\infty,1}.$$

\appendix
\section{}\label{section:appendix}
We prove two technical results concerning the constants $\Lambda_n$ and $\widetilde{\Lambda}_n$. In \cref{t_techmain} we will for $n\not=4$ determine the exact value of $\Lambda_n$ with the help of \cref{theorem-transformation-with-bound}. In \cref{lemma:optimality-triangle-function} we prove the main property of the constants $\widetilde{\Lambda}_n$ that we used in the paper.
 \begin{theorem}\label{t_techmain}
	We have $\Lambda_1=\Lambda_2=1$,
	$\frac{\sqrt{3}}{2}\leq \Lambda_4\leq 1$ and
	$\Lambda_n=\frac{\sqrt{3}}{2}$ for any $n\notin\{1,2,4\}$.
	
	Moreover, for $n\neq 4$ there exists a $g\in L_\infty(\Omega_n)$, $T\in \Aut_n, z\in\CC$ such that that $\Lambda(g,T,z)=\Lambda(g)=\Lambda_n$.
	
	\begin{proof}
		If $n=1$ then $\Lambda(g,\Id,g(1))=1$ for all $g\in\mathcal{S}(\Omega_n)$ since we agreed to count $\frac{0}{0}=1$. Hence, $\Lambda_1=1$.
		If $n=2$ then $\Lambda(g,T,\frac{g(1)+g(2)}{2})=1$ for all $g\in\mathcal{S}(\Omega_n)$ where $T(1)=2$. Hence, $\Lambda_2=1$.
		It follows from Theorem \ref{theorem-transformation-with-bound} that $\Lambda_n\geq \frac{\sqrt{3}}{2}$ for all $n\geq 3$. It only remains to show that this is in fact an equality whenever $n=3$ or $n\geq 5$, which we shall do now. For the given values of $n$, we can find a partition $\{A_1,A_2,A_3\}$ of $\Omega_n$ such that $\frac{1}{5}\leq \frac{\mu_n(A_j)}{\mu_n(\Omega_n)}\leq \frac{2}{5}$ for $j=1,2,3$. Now, denote  $w_j := e^{\frac{2\pi i j}{3}}$ for $j=1,2,3$ and construct the function
		$g = \sum_{j=1}^3 w_j\chi_{A_j}\in L_\infty(\Omega_n)$. We will show that $\Lambda(g)\leq
		\frac{\sqrt{3}}{2}$.\\
		
		Suppose $\Lambda(g)>\frac{\sqrt{3}}{2}$. Then there exists $T\in \Aut_n$, $z_0\in\CC$ and $\lambda>\frac{\sqrt{3}}{2}$ so that
		$$|g(T(\omega))-g(\omega)|\geq \lambda(|g(T(\omega))-z_0|+|g(\omega)-z_0|)$$
		a.e..
		
		We note that for $k\not=l$ we have
		$$|w_k - w_l| = \sqrt{3}.$$
		
		Denote $B_{k,j} = A_k\cap T^{-1}(A_j)$ so that $B_{k,j}\subseteq A_k$ and $T(B_{k,j})\subseteq A_j$. Moreover, since $\{A_1,A_2,A_3\}$ is a partition of $\Omega_n$, we have for $l=1,2,3$ that
		\begin{align}\label{eq:optimality-partition}
			A_l = B_{l,1}\cup B_{l,2}\cup B_{l,3} &\quad T^{-1}(A_l) = B_{1,l}\cup B_{2,l} \cup B_{3,l}.
		\end{align}
		We note that if $\mu_n(B_{k,j}\cup B_{j,k})>0$ we must by the assumption have that
		$$|w_k - w_j| \geq \lambda(|w_k - z_0| + |w_j-z_0|).$$
		This is to say that $z_0$ lies within the ellipse with foci $w_k$ and $w_j$ and eccentricity $\lambda$.

		Now suppose $\mu_n(B_{k,k})>0$ for some $k$. Then $z_0 = w_k$ and for $l,j\not=k$ we have
		$$|w_l - w_j| \leq \sqrt{3} < 2\lambda<2\lambda\sqrt{3}=\lambda(|w_l - w_k| + |w_j - w_k|)=\lambda(|w_l - z_0| + |w_j - z_0|)$$
		and hence $\mu_n(B_{l,j})=0$.
	However, \eqref{eq:optimality-partition} then implies for $j\not= k$  that $$\mu_n(A_j)= \mu_n(B_{j,1})+\mu_n(B_{j,2})+\mu_n(B_{j,3}) = \mu_n(B_{j,k}).$$ Therefore, using this and \eqref{eq:optimality-partition} we obtain
		\begin{align*}
			2\mu_n(A_{k}) &= \mu_n(A_k) + \left(\mu_n(B_{1,k})+\mu_n(B_{2,k})+\mu_n(B_{3,k})\right) \\
			&= \mu_n(A_k) + \left(\sum_{\substack{1\leq l\leq 3\\ l \not=k}}\mu_n(B_{l,k})\right)+\mu_n(B_{k,k}) \\
			&= \mu_n(A_k) + \left(\sum_{\substack{1\leq l\leq 3\\ l \not=k}}\mu_n(A_l)\right)+\mu_n(B_{k,k}) \\
			&= \mu_n(B_{k,k}) + \mu_n(A_1)+\mu_n(A_2)+\mu_n(A_3) \\
			&=\mu_n(\Omega_n) + \mu_n(B_{k,k})>\mu_n(\Omega_n).
		\end{align*}
		Hence $\frac{\mu_n(A_k)}{\mu_n(\Omega_n)}>\frac{1}{2}$, which is a contradiction with the choice of the partition.
		
		We conclude that $\mu_n(B_{k,k}) = 0$ for $k=1,2,3$. Now suppose that for some $1\leq l,j\leq 3$ with $l\not=j$ we have $\mu_n(B_{l,j}\cup B_{j,l}) = 0$. Let $k\in \{1,2,3\}$ such that $k\not=l,j$. Then we obtain $\mu_n(A_l)=\mu_n(B_{l,l})+\mu_n(B_{j,l})+\mu_n(B_{k,l}) = \mu_n(B_{k,l})$ and $\mu_n(A_j) = \mu_n(B_{l,j})+\mu_n(B_{j,j}) + \mu_n(B_{k,j}) = \mu_n(B_{k,j})$.
		We thus have
		\begin{align*}
			2\mu_n(A_{k}) &= \mu_n(A_{k}) + \mu_n(B_{k,l}) + \mu_n(B_{k,j}) +\mu_n(B_{k,k})\\
			&= \mu_n(A_k) + \mu_n(A_l)+\mu_n(A_j)= \mu_n(\Omega_n)
		\end{align*}
		and thus $\frac{\mu_n(A_k)}{\mu_n(\Omega_n)} =\frac{1}{2}$. This contradicts the choice of the partition sets.
		
		Hence, $\mu_n(B_{l,j}\cup B_{j,l})>0$ for all $l,j$ with $l\not=j$. This means that the point $z_0$ lies in all three ellipses (i.e. for $l\not=j$ the point $z_0$ has to lie inside the ellipse with foci $w_l$ and $w_j$ and eccentricity $\lambda$). We obtain that for $\lambda=\frac{\sqrt{3}}{2}$ the only point in the intersection of the three ellipses is $0$, and that for $\lambda>\frac{\sqrt{3}}{2}$ the intersection is empty (see \cref{figure:optimality})

		\begin{figure}[h!]
			\begin{tikzpicture}[baseline]
				% place nodes
				\node at (0,0) 			(origin) {};
				\node at (0.3,-0.4)   		(O) {$O$};
				\node at (2,0)   			(v1) {$w_3$};
				\node at (-1,1.6)   		(v2) {$w_1$};
				\node at (-1, -1.6)   		(v3) {$w_2$};
				
				\draw[-,] (origin) -- node[right] {} (v1);
				\draw[-,] (origin) -- node[right] {} (v2);
				\draw[-,] (origin) -- node[right] {} (v3);
				
				\draw[-,] (v3) -- node[right] {} (v1);
				\draw[-,] (v1) -- node[right] {} (v2);
				\draw[-,] (v2) -- node[right] {} (v3);
				\filldraw (0,0) circle (2pt);
				\draw (-1,0) 		ellipse (28pt and 60pt);
				\draw[rotate=120] (-1,0) 		ellipse (28pt and 60pt);
				\draw[rotate=240] (-1,0) 		ellipse (28pt and 60pt);
				\draw (-1,0) 		ellipse (28pt and 60pt);
			\end{tikzpicture}
			\caption{The image of the simple function $g$ consists of the three points $w_1,w_2$ and $w_3$. The three ellipses with foci $w_l$ and $w_j$ (for $l$ and $j$ different) and eccentricity $\lambda = \frac{\sqrt{3}}{2}$ are drawn. The only point that lies in all three the ellipses is the point $z_0 :=0$.}
			\label{figure:optimality}
		\end{figure}
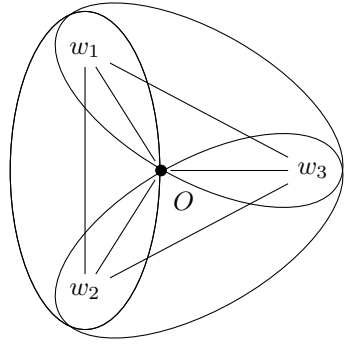
		Hence, $\Lambda(g)\leq\frac{\sqrt{3}}{2}$. Therefore $\Lambda_n=\frac{\sqrt{3}}{2}$.
	\end{proof}
	
\end{theorem}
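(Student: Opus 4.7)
For the trivial cases $n=1$ and $n=2$, I would exhibit explicit witnesses: the convention $\frac{0}{0}=1$ makes $\Lambda(g,\Id,g(1))=1$ trivially for $n=1$, while for $n=2$ the swap $T$ paired with the midpoint $z=\tfrac{1}{2}(g(1)+g(2))$ produces pointwise ratio $1$. Combined with the universal bound $\Lambda(g,T,z)\leq 1$, this settles $\Lambda_1=\Lambda_2=1$. For $n\geq 3$, the bound $\Lambda_n\geq \tfrac{\sqrt{3}}{2}$ is an immediate consequence of \cref{theorem-transformation-with-bound}, so the real content lies in establishing the matching upper bound for $n=3$ and $n\geq 5$.

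To produce this upper bound I would construct an explicit extremizer. Let $w_j:=e^{2\pi ij/3}$ for $j=1,2,3$ denote the cube roots of unity (so $|w_k-w_l|=\sqrt{3}$ for $k\neq l$, and $0$ is equidistant from all three at distance $1$); fix a partition $\{A_1,A_2,A_3\}$ of $\Omega_n$ with relative masses $\frac{\mu_n(A_j)}{\mu_n(\Omega_n)}\in[\tfrac15,\tfrac25]$, and set $g:=\sum_{j=1}^3 w_j\chi_{A_j}$. Such a partition can be chosen when $n=3$ or $n\geq 5$; it is exactly $n=4$ that is excluded, because any three-block partition of a $4$-point space must contain a block of relative mass $\geq\tfrac12$. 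Assuming for contradiction that $\Lambda(g)>\tfrac{\sqrt{3}}{2}$, pick $T\in\Aut_n$, $z_0\in\CC$, and $\lambda>\tfrac{\sqrt{3}}{2}$ so that $|g\circ T-g|\geq\lambda(|g-z_0|+|g\circ T-z_0|)$ a.e. Refining via $B_{k,j}:=A_k\cap T^{-1}(A_j)$, whenever $\mu_n(B_{k,j})>0$ the point $z_0$ must lie in the closed ellipse with foci $w_k,w_j$ and eccentricity $\lambda$.

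I would then exhaust three cases via the double-counting identities $\mu_n(A_l)=\sum_k\mu_n(B_{l,k})=\sum_k\mu_n(B_{k,l})$. First, if some diagonal $B_{k,k}$ has positive mass then $z_0=w_k$, and for any off-diagonal $B_{l,j}$ with $l,j\neq k$ the ellipse inequality would demand $\sqrt{3}\geq 2\lambda\sqrt{3}$, which fails; rearranging the double sums collapses to $2\mu_n(A_k)=\mu_n(\Omega_n)+\mu_n(B_{k,k})>\mu_n(\Omega_n)$, contradicting the upper bound on the relative mass. Second, if every $B_{k,k}$ is null but some off-diagonal $B_{l,j}\cup B_{j,l}$ with $l\neq j$ is null, a similar bookkeeping yields $2\mu_n(A_k)=\mu_n(\Omega_n)$ for the third index, again in conflict with the partition constraint. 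In the remaining case every off-diagonal $B_{l,j}$ is nonempty, forcing $z_0$ to lie simultaneously in all three ellipses with eccentricity $\lambda$; but these three ellipses (anchored at the three cube roots of unity) meet exactly at the origin when $\lambda=\tfrac{\sqrt{3}}{2}$ and have empty intersection for any larger $\lambda$, delivering the final contradiction.

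For the attainment claim in the "moreover" clause: when $n<\infty$ the last sentence of \cref{theorem-transformation-with-bound} directly supplies an optimal pair $(T,z_0)$ for any given $g$, in particular for the extremizer above. For $n=\infty$, I would take $\mu_\infty(A_j)=\tfrac13$ and let $T$ be any measure-preserving bijection cyclically permuting $A_1\to A_2\to A_3\to A_1$; with $z_0=0$ the defining ratio equals $\tfrac{\sqrt{3}}{2}$ pointwise. The main obstacle I anticipate is the measure bookkeeping across the case analysis — the partition bounds are finely tuned so that neither the diagonal degeneracy nor the off-diagonal degeneracy can occur, and the critical eccentricity $\tfrac{\sqrt{3}}{2}$ emerges geometrically as the exact threshold at which the three ellipses anchored at the cube roots of unity cease to share a common point.
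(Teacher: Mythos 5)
Your proposal is correct and follows essentially the same route as the paper: explicit witnesses for $n=1,2$, the lower bound from \cref{theorem-transformation-with-bound}, the same cube-roots-of-unity extremizer with the $B_{k,j}$ double-counting case analysis and the three-ellipse geometry at eccentricity $\tfrac{\sqrt{3}}{2}$, and attainment via the final clause of \cref{theorem-transformation-with-bound} (your explicit cyclic witness for $n=\infty$ is a small addition the paper leaves implicit). One shared cosmetic slip worth noting: for $n=7$ there is no partition with all relative masses in $[\tfrac{1}{5},\tfrac{2}{5}]$ (each block would need exactly $2$ points, giving only $6$), but the argument only uses that every block has relative mass strictly less than $\tfrac{1}{2}$, so taking masses $\tfrac{2}{7},\tfrac{2}{7},\tfrac{3}{7}$ repairs both your write-up and the paper's.
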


\begin{lemma}\label{lemma:optimality-triangle-function}
	Let $1<n\leq \infty$. Then there is a $g\in L_\infty(\Omega_n)$ with $Diam(g(\Omega_n))=1$ and so that $\widetilde{\Lambda}_n=\sup_{z\in \CC}\frac{1}{\|g-z\|_1}$.
	\begin{proof} The result for $n=2$ follows directly by taking $g = \chi_{\{1\}}$.
		
		Thus, suppose $n\geq 3$. We can build a partition $\{A_1,A_2,A_3\}$ of $\Omega_n$ so that:
		\begin{itemize}
			\item  If $n=3k,\ k\in\NN$, or $n=\infty$, then $\mu_n(A_1)=\mu_n(A_2)=\mu_n(A_3) = \frac{1}{3}$.
			\item If $n=3k+1,\ k\in\NN$, then $\mu_n(A_1)=\mu_n(A_2)= \frac{k}{n},\ \mu_n(A_3) = \frac{k+1}{n}$.
			\item  If $n=3k+2,\ k\in\NN$, then $\mu_n(A_1)=\mu_n(A_2)= \frac{k+1}{n},\ \mu_n(A_3) = \frac{k}{n}$.
		\end{itemize}

		For convenience let us denote
		$$a=\mu_n(A_1)=\mu_n(A_2),\ b=\mu_n(A_3),\ w_k=e^{\frac{2\pi ki}{3}},\ k=0,1,2.$$
		
		Define $g_0\in L_\infty(\Omega_n,\mu_n)$ by $$g_0 = \chi_{A_1}w_1 + \chi_{A_2}w_2 + \chi_{A_3}w_0.$$
		Since $\mu_n(A_1)=\mu_n(A_2)$, it is clear that the minimum of $\CC\ni z\mapsto\|g_0 - z\|_1$ is attained for real-valued $z$, and moreover that $-\frac{1}{2}\leq z\leq 1$. When $n=4$, it is clear from the triangle inequality that the minimum is attained at the point $t_0=1$ and we have $\|g_0 -t_0\|_1 = \frac{\sqrt{3}}{2}$. Now assume $n\not=4$ so that the ratio $\frac{b}{a}$ satisfies $\frac{b}{a}< \sqrt{3}$ (the ratio $\frac{b}{a}$ is maximal for $n=7$ in which case we have $\frac{b}{a}=\frac{\frac{3}{7}}{\frac{2}{7}} = \frac{3}{2}<\sqrt{3}$). Hence $\sqrt{3}a-b>0$.  We have for $t\in [-\frac{1}{2},1]$ that
		$$\|g_0-t\|_1=2a|w_1-t|+b(1-t).$$
		Then
		$$\frac{d}{dt}\|g_0 - t\|_1=2a\frac{t+\frac{1}{2}}{|w_1-t|}-b.$$
		As $\frac{d}{dt}\|g_0 - t\|_1$ is negative when evaluated at $-\frac{1}{2}$ and positive when evaluated at $1$ (as $\sqrt{3}a-b>0$), the minimum of $\|g_0-t\|_1$ must be assumed at a point $t_0\in [-\frac{1}{2},1]$ satisfying
		$$b|w_1-t_0|=2a(t_0+\frac{1}{2}).$$
		Then
		$$b^2((t_0+\frac{1}{2})^2+\frac{3}{4})=4a^2(t_0+\frac{1}{2})^2$$
		and
		$$(t_0+\frac{1}{2})^2=\frac{3b^2}{4(4a^2-b^2)}=\frac{3b^2}{4(2a-b)}$$
		since $2a+b=1$. Therefore
		$$(t_0+\frac{1}{2})^2+\frac{3}{4}=\frac{3b^2}{4(2a-b)}+\frac{3}{4}=\frac{3a^2}{(2a-b)}$$ and
		\begin{align*}
			\|g_0-t_0\|_1 &= 2a|t_0-w_1| + b(1-t_0)\\
			&= 2\frac{\sqrt{3}a^2}{\sqrt{2a-b}}+b-b(\frac{\sqrt{3}b}{2\sqrt{2a-b}}-\frac{1}{2})\\
			&=\frac{\sqrt{3}\sqrt{2a-b}}{2}+\frac{3b}{2}\\
			&=\frac{\sqrt{3-6b}}{2}+\frac{3b}{2}.
		\end{align*}
		\begin{itemize}
			\item For $n=3k$ or $n=\infty$ we have $\mu_n(A_3)=\frac{1}{3}$ and find $\|g_0-t_0\|_1=1$.
			\item  For $n=3k+1$ ($n\not=4$) we have $\mu_n(A_3) = \frac{k+1}{3k+1}$ and find \[\|g_0 - t_0\|_1 = \frac{1}{2}\sqrt{\frac{3k-3}{3k+1}} + \frac{1}{2}\cdot\frac{3k+3}{3k+1}.\]
			\item For $n=3k+2$ we have $\mu_n(A_3) = \frac{k}{3k+2}$ and find \[\|g_0 - t_0\|_1 = \frac{1}{2}\sqrt{\frac{3k+6}{3k+2}} + \frac{1}{2}\cdot\frac{3k}{3k+2}.\]
		\end{itemize}
		
		Now, take $g = \frac{1}{\sqrt{3}}g_0$ so that $\Diam(g(\Omega_n))=1$. Then
		$$\sup_{z\in \CC}\frac{1}{\|g-z\|_1}=\sup_{z\in \CC}\frac{\sqrt{3}}{\|g_0-z\|_1}=\frac{\sqrt{3}}{\|g_0-t_0\|_1}=\tilde{\Lambda}_n.$$
	\end{proof}
\end{lemma}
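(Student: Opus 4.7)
\medskip
\noindent\textbf{Proof proposal.} The plan is to exhibit, for each $n$, an explicit candidate $g$ whose essential range consists of (at most) three points placed at scaled cube roots of unity, with masses distributed as evenly as $n$ allows. First, the case $n=2$ is handled separately by the indicator $g=\chi_{\{1\}}$, which has $\Diam(g(\Omega_2))=1$ and $\min_{z\in\CC}\|g-z\|_1=1/2=1/\widetilde{\Lambda}_2$.

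For $n\geq 3$ (and $n=\infty$) I would partition $\Omega_n$ into measurable sets $A_1,A_2,A_3$ with $\mu_n(A_1)=\mu_n(A_2)=a$ and $\mu_n(A_3)=b$, choosing the pair $(a,b)$ according to the residue of $n$ modulo $3$: $a=b=1/3$ when $n=3k$ or $n=\infty$; $a=k/n,\,b=(k+1)/n$ when $n=3k+1$; and $a=(k+1)/n,\,b=k/n$ when $n=3k+2$. Setting $w_j:=e^{2\pi ij/3}$ and
\[g_0:=w_1\chi_{A_1}+w_2\chi_{A_2}+w_0\chi_{A_3},\]
the essential range $\{w_0,w_1,w_2\}$ has diameter $\sqrt{3}$, so the rescaling $g:=g_0/\sqrt{3}$ automatically satisfies $\Diam(g(\Omega_n))=1$, and it suffices to verify that $\min_{z\in\CC}\|g_0-z\|_1=\sqrt{3}/\widetilde{\Lambda}_n$ in each case.

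The computation of this minimum reduces to one-variable calculus. Since $w_2=\overline{w_1}$ and $\mu_n(A_1)=\mu_n(A_2)$, the identity $\|g_0-z\|_1=\|g_0-\overline{z}\|_1$ combined with the convexity of the $L_1$-norm forces any minimizer to lie on the real axis. An elementary estimate then traps the real minimizer $t_0$ in $[-1/2,1]$, where
\[\|g_0-t\|_1=2a\,|w_1-t|+b\,(1-t).\]
Setting the derivative equal to zero yields the critical equation $b\,|w_1-t_0|=2a(t_0+1/2)$; squaring (with $|w_1-t|^2=(t+1/2)^2+3/4$) and using $2a+b=1$ lets me solve for $t_0$ explicitly, and substituting back produces the clean expression
\[\|g_0-t_0\|_1=\tfrac{1}{2}\sqrt{3-6b}+\tfrac{3b}{2}.\]
A routine check of the four cases for $b$ matches precisely $\sqrt{3}/\widetilde{\Lambda}_n$.

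The main obstacle is the exceptional case $n=4$: the balanced partition gives $b/a=2>\sqrt{3}$, which makes the derivative of $t\mapsto\|g_0-t\|_1$ everywhere positive on $[-1/2,1]$, so the interior critical point is infeasible and the minimum migrates to the endpoint $t_0=1$. A direct triangle-inequality check gives $\|g_0-1\|_1=\sqrt{3}/2$, matching $\widetilde{\Lambda}_4=2$. The conceptual insight, in contrast with the self-adjoint case (where two antipodal masses suffice), is that for normal $g$ the extremal configuration is an equilateral triangle at cube roots of unity; once this is recognised, the rest of the argument is careful bookkeeping of the partition measures as $n$ varies modulo $3$, together with the small calculus exercise above.
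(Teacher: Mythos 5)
Your proposal is correct and follows essentially the same route as the paper: the same three-set partition with measures $(a,a,b)$ depending on $n \bmod 3$, the same function $g_0$ supported on the cube roots of unity, the same symmetry reduction to a real minimizer in $[-\tfrac12,1]$, the same critical-point computation yielding $\|g_0-t_0\|_1=\tfrac12\sqrt{3-6b}+\tfrac{3b}{2}$, and the same rescaling $g=g_0/\sqrt{3}$. One small slip in your $n=4$ discussion: since $2a=b=\tfrac12$ there, the derivative $2a\tfrac{t+1/2}{|w_1-t|}-b$ is strictly \emph{negative} on $[-\tfrac12,1]$ (not positive), which is exactly why the minimum migrates to the right endpoint $t_0=1$; with the sign corrected (or using the paper's direct triangle-inequality argument at $z=1$), your conclusion $\|g_0-1\|_1=\tfrac{\sqrt{3}}{2}=\sqrt{3}/\widetilde{\Lambda}_4$ stands.
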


\section*{Acknowledgements}
We like to thank Jinghao Huang and Thomas Scheckter for providing feedback that helped improve the exposition.

\bibliographystyle{amsalpha}

\end{document}